\documentclass[10pt]{article}

\usepackage{amsfonts,amssymb,amsmath,amsthm,indentfirst}
\usepackage{fontenc,enumerate}
\usepackage{graphics}
\usepackage{color,graphicx}

\usepackage{latexsym,mathrsfs,enumerate,verbatim,bm}
%\usepackage{showkeys}
%\usepackage{amsmath,amssymb,latexsym,graphics,graphicx,amsthm,verbatim,epic,eepic,pstcol,enumerate,wasysym,mathrsfs}

% The amsthm package provides extended theorem environments
%\usepackage{natbib}

%\usepackage[pdftex]{hyperref}
\usepackage[colorlinks, linkcolor=blue, citecolor=blue, urlcolor=blue, pagebackref, hypertexnames=false]{hyperref}

\usepackage[light]{antpolt}    \usepackage[T1]{fontenc}

\usepackage{wasysym}
\usepackage{graphics}

\usepackage{color,graphicx}

%%%%%%%%%%%%5

\DeclareMathAlphabet{\mathpzc}{OT1}{pzc}{m}{it}

%%%%%%%%%%%%%%%%%%%%%%%%%%%%%%%%%%%%%%%%%%%%%%%

\newcommand{\sett}[1]{\left\{   #1   \right\}}

\newcommand{\paar}[1]{\left(   #1   \right)}

\usepackage[colorinlistoftodos]{todonotes}
\makeatletter
\providecommand\@dotsep{5}
\def\listtodoname{List of Todos}
\def\listoftodos{\@starttoc{tdo}\listtodoname}
\makeatother
\allowdisplaybreaks
%%%%%%%%%%%%%%%%%%%%%%%%%%%%%%%%%%%%%%%%%
 
%%%%%%%%
%\usepackage{showkeys}
\usepackage{xcolor}
\usepackage[pagewise,mathlines]{lineno}

%\linenumbers
%------------------------------------------------------------------------------------
\newcommand{\vertiii}[1]{{\left\vert\kern-0.25ex\left\vert\kern-0.25ex\left\vert #1 
		\right\vert\kern-0.25ex\right\vert\kern-0.25ex\right\vert}}
%-----------page format------------------------
\headsep 0cm \topmargin 0cm \oddsidemargin 0cm \evensidemargin 0cm
\textwidth 16 truecm \textheight 22 truecm
%----------------------------------------
% Use the option review to obtain double line spacing
% \documentclass[authoryear,preprint,review,12pt]{elsarticle}

% Use the options 1p,twocolumn; 3p; 3p,twocolumn; 5p; or 5p,twocolumn
% for a journal layout:
% \documentclass[authoryear,final,1p,times]{elsarticle}
% \documentclass[authoryear,final,1p,times,twocolumn]{elsarticle}
% \documentclass[authoryear,final,3p,times]{elsarticle}
% \documentclass[authoryear,final,3p,times,twocolumn]{elsarticle}
% \documentclass[authoryear,final,5p,times]{elsarticle}
% \documentclass[authoryear,final,5p,times,twocolumn]{elsarticle}

% if you use PostScript figures in your article
% use the graphics package for simple commands
% \usepackage{graphics}
% or use the graphicx package for more complicated commands
% \usepackage{graphicx}
% or use the epsfig package if you prefer to use the old commands
% \usepackage{epsfig}

% The amssymb package provides various useful mathematical symbols

% The lineno packages adds line numbers. Start line numbering with
% \begin{linenumbers}, end it with \end{linenumbers}. Or switch it on
% for the whole article with \linenumbers.
% \usepackage{lineno}
\numberwithin{equation}{section}
\newtheorem{theorem}{\quad Theorem}[section]
\newtheorem{lemma}[theorem]{\quad Lemma}

\newtheorem{remark}[theorem]{\quad Remark}

\newtheorem{proposition}[theorem]{\quad Proposition}

\newtheorem{definition}{Definition}[section]

%%%%%%%%%%%%%%%%%%%%%%%scr

\newcommand{\R}{\mathbb{R}}

%%%%%%%%%%%%%%%%pzc

%%%%%%%%%%%%%%%%

\newcommand{\dd}{{\rm d}}

\newcommand{\ii}{{\rm i}}

\newcommand{\N}{\mathbb{N}}

\newcommand{\al}{\alpha}
\newcommand{\rr}{\mathbb{R}}

%%%%%%%%%%%%%%%%%%%%%%%%%%%% 
\allowdisplaybreaks
%%%%%%%%%%%%%%%%%%%%%%%%%%%%%%%%
%%%%%%%%%%%%%%%%%%%%%%%%%%%
 
%%%%%%%%%%%%%%%%%%%%%%%%%%%%%% 

%%%%%%%%%%%%%%%%%%%%%%%%%%%%%%%%%%%%%%%%
  \author{Vicente Alvarez and  Amin Esfahani\footnote{Department of Mathematics, Nazarbayev University, Astana 010000, Kazakhstan\newline   E-mail: saesfahani@gmail.com, amin.esfahani@nu.edu.kz} }

\title{Multi-soliton solutions of  Klein–Gordon–Zakharov  system    
	\footnotetext{2020 Mathematical subject classification: 35L51, 35C07, 35B40}
	\footnotetext{Keywords:  Klein-Gordon-Zakharov system, Multi-soliton, Asymptotic behavior}}

\date{}

\begin{document}
	
	% \thanks{$^\ast$Corresponding author. \\Department of Mathematics, Nazarbayev University, Astana 010000, Kazakhstan\\
	% 	E-mail: {\tt vicente.alvarez@nu.edu.kz, amin.esfahani@nu.edu.kz.}}
	
	% \title[ ]{ 
	% 	\footnotetext{2020 Mathematical subject classification: 35Q35, 35C08, 35Q51, 35B40, 37K40}
	% 	\footnotetext{Keywords:   , Multi-soliton, Asymptotic behavior.
	% 	}
	% }   
	
	  \maketitle
	
	% \begin{center}
		
	% 	{\bf Vicente  Alvarez and  Amin Esfahani$^{\ast}$   
	% 		\vskip.2cm}
	% \end{center}
	
	% \vskip.3cm
	
	% %	\noindent{{\bm Abstract.} In this work,
	% 	%	}
	
	\begin{abstract}
In this study, we investigate the Klein-Gordon-Zakharov system with a focus on identifying multi-soliton solutions. Specifically, for a given number \( N \) of solitons, we demonstrate the existence of a multi-soliton solution that asymptotically converges, in the energy space, to the sum of these solitons. Our proof extends and builds upon the previous results in \cite{cote, cotem, IA} concerning the nonlinear Schrödinger equation and the generalized Klein-Gordon equation.  
In contrast to the method used in \cite{cotem} to establish the existence of multi-solitons for the Klein–Gordon equation, where the difficulty arises from the directions imposed by the coercivity property, requiring the identification of eigenfunctions of the coercivity operator to derive new control estimates—the structure of the present system allows for a more refined result. Specifically, the directional constraints can be eliminated by employing orthogonality arguments derived from localization and modulation techniques.

	\end{abstract}
 
	\section{Introduction}

In this paper, we consider the following system of  Klein–Gordon–Zakharov (KGZ) equations	
 	\begin{equation}\label{system1}
 	    \begin{cases}
      u_{tt}-u_{xx}+u+\al uv+\beta|u|^2u=0,\\
      v_{tt}-v_{xx}=(|u|^2)_{xx},\qquad x,t\in\rr.
  \end{cases}
 	\end{equation}
Here $\alpha\in\rr$ and $\beta\neq0$. In theoretical investigations of the dynamics of strong Langmuir turbulence in plasma physics, various forms of Zakharov equations play a crucial role (see Guo \cite{guo}, Ozawa, Tsutaya, and Tsutsumi \cite{ott, ott-2}, Thornhill and ter Haar \cite{Tt}, and Tsutaya \cite{Tsu}). 
Several studies have focused on the Klein–Gordon–Zakharov system \eqref{system1} with 
$\beta=0$, which describes the interaction between a Langmuir wave and an ion sound wave in plasma. In our notation, 
$u$ represents the fast-scale component of the electric field, while 
$v$ denotes the deviation of ion density \cite{zakh}. The first local well-posedness result for this system ($\beta=0$) appears to trace back to \cite{gv,ott}. Since the solutions in \cite{gv} were obtained using a fixed-point method in a Strichartz-type space, they were only conditionally unique. In an interesting development, Masmoudi and Nakanishi \cite{mana} demonstrated that, under additional smoothness assumptions, the solutions are also unconditionally unique (i.e., unique in a large  energy space). Significant progress was made in the late 1990s concerning the global existence of solutions to \eqref{system1}. It turns out that the conservation laws associated with \eqref{system1} (and its higher-dimensional analogs) are sufficient to control only small solutions, which were shown to exist globally \cite{ott,ott-2}. Notably, different propagation speeds and the (higher) dimension played a crucial role in the success of these approaches.
In \cite{hss2013}, it was recently proved in the case $\beta=0$ that \eqref{system1} is locally well-posed in $H^s\times H^{s-1}\times H^{s-1}\times  H^{s-2}$ with $s>1/2$, and the flow-map is Lipschitz.

Regarding the dynamics of standing waves, when \(\beta=0\), it was proved in \cite{hss2013} that \eqref{system1} possesses a solitary (standing) wave of the form \((u(x,t),v(x,t))=(\exp(\ii \omega t)\phi_\omega(x),\psi(x))\), where \(\phi\) and \(\psi\) are either real-valued periodic functions with a fixed fundamental period \(L\) or functions that vanish at infinity (in the whole line context). In the latter case, it is shown that \(\psi=-\phi^2\) and
\[
\phi(x)=\sqrt{2(1-\omega^2)}\;\text{sech}\left(\sqrt{1-\omega^2}\;x\right).
\]
Gan \cite{gan}, Gan and Zhang \cite{gabzhang} (see also \cite{ggz}), and subsequently Ohta and Odorova \cite{ohtaT}, proved the existence of a standing wave with a ground state of \eqref{system1} for \(\beta=0\), by applying an intricate variational argument. They obtained the instability of the standing wave by using Payne and Sattinger's potential well argument and Levine's concavity method in high-dimensional cases. In dimension one, Lin \cite{lin} investigated the orbital stability. More precisely, he showed that the system is orbitally unstable if \(\omega<1/\sqrt{2}\) and stable if \(1>\omega>1/\sqrt{2}\). In the degenerate case \(\omega=1/\sqrt{2}\), it was shown in \cite{yin} using a modulation argument combined with the virial identity that the standing wave is unstable.

  We focus on constructing some special solutions for \eqref{system1}. More precisely, let $R_j$, $j=1,\ldots, N$,   be N solitary waves (solitons) of the KGZ system; our goal is to construct a solution of the system that behaves at infinity like the sum of these solitons. See Definition \ref{def-ms-1} below.
 Multi-solitons were first constructed for the Korteweg–de Vries (KdV) equation using the inverse scattering method, as detailed in \cite{miura}. In the realm of non-integrable equations, the development of multi-solitons began with the nonlinear Schrödinger and generalized KdV equations, where Merle \cite{merle-1}, Martel \cite{martel-1}, and Martel–Merle \cite{marmer-1} made significant contributions in the \( L^2 \)-critical and subcritical cases. Progress on the supercritical regime was achieved by Côté and Martel \cite{CoteMartel}, while Combet \cite{combet, comb} classified multi-solitons for the \( L^2 \)-supercritical generalized KdV equation. The concept of multi-solitons has since been extended to other dispersive systems, including the nonlinear Klein–Gordon equation \cite{cotem} and the water wave system \cite{mingr}. More recently, Van Tin \cite{vantian} established the existence of multi-solitons and multi kink-solitons for derivative nonlinear Schrödinger equations using fixed point methods and Strichartz estimates. The dynamics and stability of multi-solitons, particularly in KdV-type equations, have also been thoroughly explored, with stability results presented in \cite{mmt}.

 To express  \eqref{system1} in a {\it better} Hamiltonian form and present our results, we rewrite \eqref{system1} into the following form:
  	\begin{equation}\label{system2}
  	  \begin{cases}
  u_t=-\rho,\\
  \rho_t=-u_{xx}+u+\al uv+\beta|u|^2u,\\
v_t=n_x,\\
      n_{t}-v_{x}=(|u|^2)_{x}.
  \end{cases}  
  	\end{equation}
 So, by  solitons of \eqref{system2}, we refer to solutions of the form 
$$
\begin{aligned}
& u(x, t)=e^{-\ii \omega t} \tilde{\phi}_{\omega,v}(x-c t), \quad v(x, t)=\psi_{\omega, c}(x-c t), \\
& \rho(x, t)=e^{-\ii \omega t} \tilde{\rho}_{\omega, c}(x-c t), \quad n(x, t)=\varphi_{\omega, c}(x-c t), 
\end{aligned}
$$
where $\tilde{\phi}_{\omega,v}(x-ct)=e^{\ii \frac{\omega c}{1-c^2}(x-c t)} \phi_{\omega, c}(x-c t)$ and $\tilde{\rho}_{\omega, c}(x-ct)=e^{\ii \frac{\omega c}{1-c^2}(x-c t)}\rho_{\omega, c}(x-c t)$. 
Hence, \begin{equation}\label{formsolitons}
    \begin{aligned}
& u(x, t)=e^{\ii \left( \frac{\omega c}{1-c^2}x-\frac{\omega}{1-c^2} t\right)} \phi_{\omega, c}(x-c t), \quad v(x, t)=\psi_{\omega, c}(x-c t), \\
& \rho(x, t)=e^{\ii \left( \frac{\omega c}{1-c^2}x-\frac{\omega}{1-c^2} t\right)}\rho_{\omega, c}(x-c t), \quad n(x, t)=\varphi_{\omega, c}(x-c t).
\end{aligned}
\end{equation}
So, we have
\begin{equation}\label{system3}
  \left\{\begin{array}{l}
\tilde{\rho}_{\omega, c}=\ii \omega \tilde{\phi}_{\omega, c}+c \tilde{\phi}_{\omega, c}^{\prime},  \\
-\ii \omega \tilde{\rho}_{\omega, c}-c \tilde{\rho}_{\omega, c}^{\prime}=-\tilde{\phi}_{\omega, c}^{\prime \prime}+\tilde{\phi}_{\omega, c}+\alpha \tilde{\phi}_{\omega, c} \psi_{\omega, c}+\beta\left|\tilde{\phi}_{\omega, c}\right|^{2} \tilde{\phi}_{\omega, c}, \\
-c\psi_{\omega, c}^{\prime}=\varphi_{\omega, c}^{\prime}, \\
-c \varphi_{\omega, c}^{\prime}=\psi_{\omega, c}^{\prime}+\left(\left|\tilde{\phi}_{\omega, c}\right|^{2}\right)'.
\end{array}\right.  
\end{equation}
By some calculations, we can obtain that
\begin{equation}\label{equation0}
\psi_{\omega, c}=-\frac{1}{1-c^{2}} \phi_{\omega, c}^{2} 
\end{equation}
and 
\begin{equation}\label{equation1}
\phi_{\omega, c}^{\prime \prime}-\frac{1-c^{2}-\omega^{2}}{\left(1-c^{2}\right)^{2}} \phi_{\omega, c}+\frac{\alpha-\beta\left(1-c^{2}\right)}{\left(1-c^{2}\right)^{2}} \phi_{\omega, c}^{3}=0.
\end{equation}

\noindent Then, let 
\[
I = \frac{1 - c^2 - \omega^2}{(1 - c^2)^2}, \quad m = -\frac{\alpha - \beta (1 - c^2)}{(1 - c^2)^2},
\]
so that equation \eqref{equation1} becomes the following linear equation:
\begin{equation}\label{equation2}
\phi_{\omega, c}^{\prime\prime} - I \phi_{\omega, c} - m \phi_{\omega, c}^3 = 0.
\end{equation}

\noindent Then, by applying the first integral method, we obtain the following explicit exact solution to equation \eqref{equation2}:
\begin{equation}\label{form1}
\phi_{\omega, c}(\xi) = \sqrt{\frac{2(1 - c^2 - \omega^2)}{\alpha - \beta(1 - c^2)}} \operatorname{sech}\left(\frac{\sqrt{1 - c^2 - \omega^2}}{1 - c^2} \xi \right),
\end{equation}
as long as the parameters satisfy \begin{equation}\label{cond1}
    \begin{aligned}
    &1 - c^2 - \omega^2 > 0 \\& \alpha - \beta(1 - c^2) > 0.
\end{aligned}
\end{equation} 
\noindent Hence, we obtain the following lemma.
\begin{proposition}
 For any real constants $\alpha, \beta, \omega, c$ satisfying the condition \eqref{cond1}, equation \eqref{equation1} has a unique bounded positive analytic solution of the form \eqref{form1}. In particular, when $\alpha = 1$ and $\beta = 0$, the solution to equation \eqref{equation1} is given by:
\begin{equation*}
\phi_{\omega, c}(\xi) = \sqrt{2(1 - c^2 - \omega^2)} \operatorname{sech}\left(\frac{\sqrt{1 - c^2 - \omega^2}}{1 - c^2} \xi\right).
\end{equation*}

\end{proposition}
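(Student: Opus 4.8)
The plan is to treat \eqref{equation1} in its normalized form \eqref{equation2} as an autonomous second–order ODE and to recover the profile by the first–integral (energy) method, exactly as the phrase ``first integral method'' in the text suggests. First I would record the signs forced by \eqref{cond1}. Since $1-c^2-\omega^2>0$ we have $1-c^2>\omega^2\ge 0$, hence $|c|<1$ and $(1-c^2)^2>0$; together with $\alpha-\beta(1-c^2)>0$ this gives $I=\frac{1-c^2-\omega^2}{(1-c^2)^2}>0$ and $m=-\frac{\alpha-\beta(1-c^2)}{(1-c^2)^2}<0$. Writing $m=-|m|$, equation \eqref{equation2} becomes $\phi_{\omega,c}''=I\,\phi_{\omega,c}-|m|\,\phi_{\omega,c}^3$.

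Next, multiply by $\phi_{\omega,c}'$ and integrate once to obtain the first integral $\tfrac12(\phi_{\omega,c}')^2-\tfrac{I}{2}\phi_{\omega,c}^2+\tfrac{|m|}{4}\phi_{\omega,c}^4=C$. Imposing the solitary–wave decay $\phi_{\omega,c},\phi_{\omega,c}'\to 0$ as $\xi\to\pm\infty$ fixes $C=0$, so that $(\phi_{\omega,c}')^2=\phi_{\omega,c}^2\big(I-\tfrac{|m|}{2}\phi_{\omega,c}^2\big)$. On the positive branch this is the separable equation $\frac{d\phi_{\omega,c}}{\phi_{\omega,c}\sqrt{I-\frac{|m|}{2}\phi_{\omega,c}^2}}=\pm\,d\xi$, valid for $0<\phi_{\omega,c}<\sqrt{2I/|m|}$; the substitution $\phi_{\omega,c}=\sqrt{2I/|m|}\,\operatorname{sech}s$ turns the integrand into $\mp\,ds/\sqrt{I}$ and yields $\phi_{\omega,c}(\xi)=\sqrt{2I/|m|}\,\operatorname{sech}\big(\sqrt{I}\,(\xi-\xi_0)\big)$. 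Substituting back the values of $I$ and $|m|$ gives $\sqrt{2I/|m|}=\sqrt{\tfrac{2(1-c^2-\omega^2)}{\alpha-\beta(1-c^2)}}$ and $\sqrt{I}=\tfrac{\sqrt{1-c^2-\omega^2}}{1-c^2}$, so that taking $\xi_0=0$ reproduces \eqref{form1}; positivity and boundedness are then immediate from $0<\operatorname{sech}\le 1$. (Existence alone could equally well be confirmed by direct substitution of \eqref{form1} into \eqref{equation1}, but the quadrature above delivers uniqueness simultaneously.)

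For uniqueness I would argue in the phase plane $(\phi_{\omega,c},\phi_{\omega,c}')$. The equilibria are $0$ and $\pm\sqrt{I/|m|}$; the linearization at the origin is $\phi''=I\phi$ with $I>0$, so the origin is a hyperbolic saddle whose stable and unstable manifolds coincide on the zero–energy level $C=0$, forming two homoclinic loops. The positive loop is precisely the orbit found above and is the only positive solution tending to $0$ at both ends, unique up to the translation $\xi_0$ inherent in the autonomous structure; the periodic orbits encircling the centers $\pm\sqrt{I/|m|}$ are bounded but do not vanish at infinity and are therefore excluded by the solitary–wave condition. Analyticity is automatic: the right–hand side of \eqref{equation2} is a polynomial in $\phi_{\omega,c}$, hence real–analytic, so every $C^2$ solution is real–analytic. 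The main obstacle, and the only place requiring real care, is this uniqueness step: one must pin down the admissible class (bounded, positive, vanishing at infinity, normalized up to translation) and rule out the competing periodic and sign–changing orbits, since mere boundedness and positivity alone would also admit the periodic solutions around the centers; the explicit quadrature of the second step is otherwise routine.
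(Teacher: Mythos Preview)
Your proposal is correct and follows the same approach the paper indicates: the paper simply states that ``by applying the first integral method'' one obtains \eqref{form1}, without further detail, so your quadrature via the energy integral $\tfrac12(\phi')^2-\tfrac{I}{2}\phi^2+\tfrac{|m|}{4}\phi^4=0$ is exactly what is intended, and your phase-plane discussion supplies the uniqueness argument the paper leaves implicit. Your observation that the phrase ``unique bounded positive'' is, strictly speaking, too weak to exclude the periodic orbits around the centers $\pm\sqrt{I/|m|}$ is well taken; the intended class is the $H^1$ (decaying) one, as the surrounding context and the subsequent Proposition~\ref{decaimentoquadratico} make clear.
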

Additionally, we also have the following result (see \cite[Theorem 8.1.1]{cazenave}).
\begin{proposition}\label{decaimentoquadratico}
			Consider $\Phi \in H^1(\R) $ a solution of \eqref{equation1}.
			Then,
			\begin{itemize}
				\item[(i)] $\Phi \in C^{2}(\R)$ and $\left|\partial_x^{\beta} \Phi(x)\right|\stackrel{|x|\to \infty}{\longrightarrow } 0$ for all $|\beta| \leq 2$.
				\item[(ii)] For any  $0<\epsilon <1$,
				\[
				e^{\epsilon|x|} \left(\left|\Phi(x)\right|+\left|\partial_{x} \Phi(x)\right|\right) \in L^{\infty}(\R).
				\]    
			\end{itemize} 
		\end{proposition}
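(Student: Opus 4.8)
The plan is to treat \eqref{equation1}, rewritten (via the substitution preceding \eqref{equation2}) as the autonomous ODE $\Phi''=I\Phi+m\Phi^3$ with $I>0$ and $m<0$ — both signs being guaranteed by \eqref{cond1} — and to read off regularity, decay, and the decay rate from the elementary structure of this equation. The argument is the one–dimensional specialization of the classical elliptic decay estimate quoted from \cite{cazenave}.

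For (i), I would first upgrade regularity by bootstrapping. Since $\Phi\in H^1(\R)\hookrightarrow C_b(\R)$, the right–hand side $I\Phi+m\Phi^3$ is continuous, whence $\Phi''\in C(\R)$ and $\Phi\in C^2(\R)$; differentiating the equation repeatedly in fact gives $\Phi\in C^\infty$, which is more than needed. The decay $\Phi(x)\to0$ as $|x|\to\infty$ is the standard consequence of $\Phi\in H^1(\R)$, obtained from $\Phi(x)^2=-2\int_x^{\infty}\Phi\Phi'\,\dd y$ together with Cauchy–Schwarz and $\Phi,\Phi'\in L^2$. To get $\Phi'(x)\to0$, multiply the equation by $\Phi'$ and integrate: the quantity $E:=\tfrac12(\Phi')^2-\tfrac I2\Phi^2-\tfrac m4\Phi^4$ is constant along the solution. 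Letting $|x|\to\infty$ and using $\Phi\to0$ forces $(\Phi')^2\to2E$; since $\Phi'\in L^2(\R)$ this limit must vanish, so $E=0$ and $\Phi'(x)\to0$. Then $\Phi''=I\Phi+m\Phi^3\to0$ directly from the equation, finishing (i).

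For (ii), the essential point is exponential decay, and here I would use a comparison (barrier) argument. Fix any rate $\mu$ with $0<\mu<\sqrt I$. Because $\Phi\to0$, there is $R>0$ with $I+m\Phi(x)^2\ge\mu^2$ for $|x|\ge R$, so on $\{x\ge R\}$ the solution obeys $\Phi''=(I+m\Phi^2)\Phi$. For the (positive) soliton this reads $\Phi''\ge\mu^2\Phi$; comparing with the explicit supersolution $\bar\Phi(x)=\Phi(R)e^{-\mu(x-R)}$, the difference $w=\Phi-\bar\Phi$ satisfies $w''\ge\mu^2 w$ with $w(R)=0$ and $w\to0$ at $+\infty$, so the maximum principle gives $w\le0$, i.e. $0\le\Phi(x)\le\Phi(R)e^{-\mu(x-R)}$. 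The same argument on $\{x\le-R\}$ and the reflection $x\mapsto-x$ yield $|\Phi(x)|\le Ce^{-\mu|x|}$. Feeding this back into the equation gives $|\Phi''(x)|\le Ce^{-\mu|x|}$, and integrating from infinity, $\Phi'(x)=-\int_x^{\infty}\Phi''\,\dd y$ (legitimate since $\Phi'(\infty)=0$), produces $|\Phi'(x)|\le Ce^{-\mu|x|}$. Taking $\mu$ arbitrarily close to the linear rate $\sqrt I$ — equal to $1$ in the normalization of \cite{cazenave}, consistent with the $\operatorname{sech}$–rate in \eqref{form1} — covers the admissible range, so $e^{\mu|x|}(|\Phi|+|\Phi'|)\in L^\infty(\R)$.

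The main obstacle is the barrier step when $\Phi$ is not sign–definite, since then $|\Phi|$ is merely Lipschitz and the pointwise differential inequality is only available away from the zeros of $\Phi$. I expect to handle this either by applying the comparison argument to $\zeta=\Phi^2$, which satisfies $\zeta''\ge2\mu^2\zeta$ for $|x|\ge R$ and hence still decays exponentially, or — to recover the sharp rate — by diagonalizing the linear part: with $\mu=\sqrt I$ the variables $P=\Phi'+\mu\Phi$ and $Q=\Phi'-\mu\Phi$ solve $Q'=-\mu Q+m\Phi^3$ and $P'=\mu P+m\Phi^3$, and selecting the bounded solution $P(x)=-\int_x^{\infty}e^{\mu(x-s)}m\Phi^3\,\dd s$ together with a Gronwall estimate on the cubic forcing yields exponential decay of $(\Phi,\Phi')$ at any rate below $\sqrt I$, independently of the sign of $\Phi$. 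Everything else is routine bootstrapping and the $H^1$ embedding.
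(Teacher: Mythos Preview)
The paper does not supply its own proof of this proposition; it merely invokes \cite[Theorem~8.1.1]{cazenave}. Your proposal is a correct, self-contained rendition of precisely that classical argument---bootstrapping $H^1\hookrightarrow C_b$ for regularity, the first integral to force $\Phi'\to 0$, and a comparison/barrier estimate for the exponential rate---specialized to the one-dimensional autonomous ODE at hand. Your remark that the sharp exponent is $\sqrt{I}$ rather than $1$ is also correct and matches how the paper actually uses the result downstream in \eqref{decay}; the ``$0<\epsilon<1$'' in the statement is simply the normalization inherited from \cite{cazenave}. The only place requiring care is the sign-indefinite case, but since the paper works exclusively with the explicit positive soliton \eqref{form1}, your barrier argument for positive $\Phi$ already suffices for every subsequent use.
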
 
Thus from the above discussion, we write the explicit form of the solitons which are the subject of our study. In fact, 
\begin{equation}\label{solitons1}
\begin{aligned}
\vec{\Phi}_{\omega, c}(x)&=\left(\tilde{\phi}_{\omega, c}(x), \tilde{\rho}_{\omega, c}(x), \psi_{\omega, c}(x), \varphi_{\omega, c}(x)\right) \\
&=\left(e^{\ii\theta x}\phi_{\omega, c}(x), e^{\ii\theta x} \rho_{\omega, c}(x), \psi_{\omega, c}(x), \varphi_{\omega, c}(x)\right) ,
\end{aligned}
\end{equation}
where $\theta=\frac{\omega c}{1-c^2}$ and $\phi_{\omega, c}(x), \rho_{\omega, c}(x), \psi_{\omega, c}(x), \varphi_{\omega, c}(x)$ defined by

$$
\left\{\begin{array}{l}
\phi_{\omega, c}(x)=\sqrt{\frac{2\left(1-c^{2}-\omega^{2}\right)}{\alpha-\beta\left(1-c^{2}\right)}} \operatorname{sech}\left(\frac{\sqrt{1-c^{2}-\omega^{2}}}{1-c^{2}} x\right),  \\
\rho_{\omega, c}(x)=(\ii \omega+c\partial_x) \phi_{\omega, c}(x), \\
\psi_{\omega, c}(x)=-\frac{2(1-c^{2}-\omega^{2})}{\left(1-c^{2}\right)(\alpha-\beta\left(1-c^{2}\right))} \operatorname{sech}^{2}\left(\frac{\sqrt{1-c^{2}-\omega^{2}}}{1-c^{2}} x\right), \\
\varphi_{\omega, c}(x)=\frac{\left.2 c\left(1-c^{2}-\omega^{2}\right)\right]}{\left(1-c^{2}\right)\left[\alpha-\beta\left(1-c^{2}\right)\right]} \operatorname{sech}^{2}\left(\frac{\sqrt{1-c^{2}-\omega^{2}}}{1-c^{2}} x\right).
\end{array}\right.
$$
 
     % well-posedness   $C([0,T]; H^{1}\times L^2\times L^2\times L^2 )$

% Moreover, in \cite{hss2013}, when $\beta=0$ we have well-posedness,namely,  $C([0,T]; H^{s}(\R)\times H^{s-1}(\R)\times H^{s-1}(\R)\times H^{s-1}(\R) )$ for $s>1/2$ and the cubic Klein-Gorodn is well-posed (\cite{Nakoz}) in $C([0,T]; H^{s}(\R)\times H^{s-1}(\R))$ with $s\geq1/2$.
 
\bigskip
In this article, we are interested in the existence of multi-soliton solutions of \eqref{system2} in the associated     energy space
$$X:=H^1_{\rm complex}(\R)\times L^2_{\rm complex}(\R)\times L^2_{\rm real}(\R)\times L^2_{\rm real}(\R).$$
\begin{definition}\label{def-ms-1}
For  $\alpha, \beta,  \omega, c$ satisfying the condition \eqref{cond1} and   $\vec{\phi}_{\omega,c}$  solution as in \eqref{solitons1}, we say that a solitary wave solution of \eqref{system2} traveling along the line $x = x_0 + ct$ is defined by $$\vec{R}_0=\left(e^{\ii\lambda(x,t)}\phi_{\omega, c}(x-ct-x_0), e^{\ii\lambda(x,t)}\rho_{\omega, c}(x-ct-x_0), \psi_{\omega, c}(x-ct-x_0), \varphi_{\omega, c}(x-ct-x_0)\right),$$
where $\lambda(x,t)=(\theta( x-ct)-\omega t +\gamma_0)$ and $\gamma_0=-\theta x_0$.
\end{definition}

\medskip
In this sense, we observe the strict relationship that exists between each component of  $\vec{\Phi}_{\omega, c}$, so that for any   $0<\epsilon <1$, we can obtain the following estimate:
\begin{equation}\label{decay}    
				 \left|\vec{\Phi}_{\omega, c}(x)\right|+\left|\partial_{x} \vec{\Phi}_{\omega, c}(x)\right|\leq Ce^{-\epsilon \sqrt{I}|x|},\qquad I=\frac{1-c^{2}-\omega^{2}}{(1-c^{2})^2}.
	\end{equation}

\noindent Now,   for $j\in \mathbb{N}$,   $c_{j}\in (-1,1)$, and $x_{j}, \gamma_j,  \omega_{j}  \in \R$,  let $\Vec{\Phi}_{\omega_{j},c_j} \in X$ be a solution as in \eqref{solitons1}. We will denote the corresponding soliton associated with \eqref{system2} the solitary along the line $x=x_j-c_j t$  is defined by $\Vec{R}_j=(R_j^{(1)},R_j^{(2)},R_j^{(3)},R_j^{(4)})$ such that \begin{equation}\label{defsolitons}
\Vec{R}_j=A_j(t)\Vec{\Phi}_{\omega_j,c_j}(x-c_jt-x_j),
			\end{equation}
   where $A_j(t)\vec{a}=\left(e^{-i\omega_j  t} a_1,e^{-i\omega_j  t}a_2,a_3,a_4\right)$.

\bigskip
 Our main goal is to demonstrate the existence of solutions to the system \eqref{system2} that exhibit asymptotic behavior similar to a sum of different solitons. In other words, our aim is to find solutions that can be approximated by a combination of solitary waves with different wave speeds.
			
			\begin{definition}\label{multisolitons}
				Let $N \in \mathbb{N} \setminus\{0,1\}$. Consider $c_1,c_2,\ldots,c_N \in (-1,1)$, $\omega_{1}, \ldots, \omega_{N}\in\R$, $x_{1}, \ldots, x_{N} \in \R$, $\gamma_{1}, \ldots, \gamma_{N} \in \R$, and $\Vec{\Phi}_{\omega_{1}}, \ldots, \Vec{\Phi}_{\omega_{N}} \in X$ be solutions as in \eqref{solitons1}. We denote by $\Vec{R}=(R_1,R_2,R_3,R_4)$, as in \eqref{defsolitons}, such that 
\begin{equation}\label{multisol}
    R_{k}(x,t):=\sum_{j=1}^{N} R_{j}^{(k)}(x,t), \, \,\, \,  \text{for   $k=1,2,3,4$.}
\end{equation}
A multi-soliton is a solution $\Vec{u}$ of \eqref{system2} for which there exists $T_{0} \in \R$ such that $\Vec{u}$ is defined on $[T_{0}, +\infty)$ and
				\[\lim_{t \rightarrow +\infty}\|\Vec{u}(t)-\Vec{R}(t)\|_{X}=0.\]
			\end{definition}			
			%\red{At this point, it is important to note that the developed theory can be applied to any bound state, i.e., it does not matter whether the solution of the system \eqref{system2} is a ground state or an excited state.}

We will lay the groundwork by presenting the main result and then constructing arguments to support this finding.
\begin{theorem}\label{maintheorem}
Let \( j \in \{1, 2, \ldots, N\} \), with \( |c_j| \leq 1 \), and let \( x_j, c_j, \omega_j \in \mathbb{R} \) satisfy condition \eqref{cond1}. Assume that \( \left( \Phi_{\omega_j} \right) \) are the solitary waves defined in \eqref{solitons1}. If \( \vec{R} \) is as defined in Definition \ref{multisolitons}, and

\[
\omega_{\star} := \frac{1}{256}\min \left\{ \frac{1 - c_j^2 - \omega_j^2}{(1 - c_j^2)^2}  : j = 1, \ldots, N \right\},
\]
\[
c_{\star} := \min \left\{ \left| c_j - c_k \right| : j, k = 1, \ldots, N, \, j \neq k \right\},
\]
then there exists \( T_0 \in \mathbb{R} \) and a solution \( \vec{u} \) to \eqref{system2} defined on \( \left[ T_0, +\infty \right) \) such that for all \( t \in \left[ T_0, +\infty \right) \), the following estimate holds:
\begin{equation}\label{desprin}
\left\| \vec{u}(t) - \vec{R}(t) \right\|_X \leq e^{- \omega_{\star}^{ \frac{1}{2}}} c_{\star} t .
\end{equation}
 \end{theorem}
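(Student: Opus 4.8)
The plan is to follow the now-standard compactness scheme of Merle, Martel--Merle and C\^ote--Martel \cite{merle-1, marmer-1, CoteMartel}, adapted to the system structure of \eqref{system2}. First I would fix an increasing sequence of final times $T_n \to +\infty$ and, for each $n$, solve \eqref{system2} backward in time with terminal data $\vec{u}_n(T_n) = \vec{R}(T_n)$, using the local well-posedness and the Lipschitz flow in $X$ from \cite{hss2013}. The heart of the argument is then a \emph{uniform backward estimate}: there exist $T_0$ and $C$, independent of $n$, such that
\[
\big\| \vec{u}_n(t) - \vec{R}(t) \big\|_X \le C\, e^{-\omega_\star^{1/2} c_\star\, t}, \qquad t \in [T_0, T_n].
\]
Once this is available, the family $\{\vec{u}_n(T_0)\}_n$ is bounded in $X$; extracting a weakly convergent subsequence and invoking continuous dependence of the flow on compact time intervals produces a limit $\vec{u}$ solving \eqref{system2} on $[T_0, +\infty)$ and inheriting the same exponential bound, which is exactly \eqref{desprin}.

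The uniform estimate is obtained by a bootstrap argument on a maximal interval $[t^\ast, T_n]$, where I would assume $\|\vec{u}_n(t)-\vec{R}(t)\|_X \le 2C e^{-\omega_\star^{1/2} c_\star t}$ and aim to improve the constant to $C$. The first step is to \emph{modulate}: since the solitons carry the symmetries of scaling (through $\omega_j$), Lorentz boosts (through $c_j$), translations ($x_j$) and phase ($\gamma_j$), I would introduce time-dependent parameters $(\omega_j(t), c_j(t), x_j(t), \gamma_j(t))$ and write $\vec{u}_n(t) = \sum_j \vec{R}_j(t) + \vec{\varepsilon}(t)$, fixing the parameters via the implicit function theorem so that $\vec{\varepsilon}$ is orthogonal to the generators of these symmetries acting on each $\vec{R}_j$. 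This is the mechanism emphasized in the abstract: the orthogonality coming from localization and modulation removes the bad directions, so that, unlike in \cite{cotem}, no separate identification of eigenfunctions of the coercivity operator is required.

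Next I would build a localized Lyapunov functional adapted to the distinct speeds. The system \eqref{system2} possesses the Hamiltonian, the momentum, and the charge associated with the phase invariance of the $u$-component; writing $\mathcal{E},\mathcal{P},\mathcal{Q}$ for their densities, I would form
\[
\mathcal{F}(t) = \sum_{j=1}^N \int \Psi_j(x,t)\,\big(\mathcal{E} + c_j\,\mathcal{P} + \omega_j\,\mathcal{Q}\big)\,dx,
\]
with smooth cutoffs $\Psi_j$ separating the solitons, well-defined because the speeds are pairwise distinct with gap $c_\star$. Two estimates are then needed. \emph{Coercivity}: expanding $\mathcal{F}$ around $\sum_j \vec{R}_j$ and using \eqref{equation1}, the quadratic form in $\vec{\varepsilon}$ is positive modulo the finitely many symmetry directions, which are precisely those killed by the orthogonality conditions, yielding $\mathcal{F}(t) - \mathcal{F}_\infty \gtrsim \|\vec{\varepsilon}(t)\|_X^2$ up to errors. \emph{Almost-conservation}: differentiating $\mathcal{F}$ in time, the exact conservation laws cancel the leading terms, and what remains is controlled by the soliton interaction, which by the exponential decay \eqref{decay} and the speed separation is $O(e^{-\delta c_\star t})$, together with cubic terms $O(\|\vec{\varepsilon}\|_X^3)$. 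Integrating backward from $T_n$ and combining with modulation estimates for the parameters (whose time derivatives are bounded by $\|\vec{\varepsilon}\|_X$ plus interactions) closes the bootstrap.

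The main obstacle is the coercivity step. The linearization of \eqref{equation1} has a kernel generated by the symmetries and, generically, a negative direction from the scaling and velocity parameters, while the coupling to the non-dispersive $(v,n)$-block makes the natural energy only conditionally positive. The delicate point is to show that the localized $\mathcal{F}$ stays coercive on the orthogonal complement after the cutoffs $\Psi_j$ are inserted, controlling the commutator errors from the localization by $\omega_\star$ and $c_\star$; this is where the structure of the KGZ system is genuinely used, since the algebraic relations \eqref{equation0}--\eqref{equation1} between the components reduce the quadratic form to a scalar Schr\"odinger-type operator whose spectrum is explicit. Managing the time-dependence of $\omega_j(t),c_j(t)$ inside $\mathcal{F}$ and verifying that the interaction errors are genuinely integrable in $t$ are the remaining technical hurdles.
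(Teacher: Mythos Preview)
Your overall scheme --- backward-in-time approximants $\vec u_n(T_n)=\vec R(T_n)$, bootstrap, modulation, a localized action with coercivity and almost-conservation, then a weak-limit passage --- matches the paper. The gap is in the modulation/coercivity step. You propose modulating four parameters $(\omega_j,c_j,x_j,\gamma_j)$ with orthogonality of $\vec\varepsilon$ to the four ``symmetry generators'', and assert this removes the bad directions so that no eigenfunction of the Hessian need be identified. That is a misreading of the abstract and is incorrect as stated: the Hessian $H_j=\mathcal S_j''(\vec R_j)$ has, besides its two-dimensional kernel $\mathrm{span}\{\partial_x\vec R_j,\vec Y_j\}$, a simple \emph{negative} eigenvalue whose eigenvector $\Psi_{\omega_j}$ (the ground state of the reduced scalar operator $L_j^1$) is not a symmetry tangent. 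Orthogonality to translation, phase, scaling and boost tangents does not force orthogonality to $\Psi_{\omega_j}$, so your quadratic form is not coercive in the regime where the soliton is unstable.

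What the paper actually does is modulate only three parameters $(\tilde\omega_j,\tilde x_j,\tilde\gamma_j)$ --- the speeds $c_j$ are never modulated --- and impose orthogonality of $\vec\varepsilon$ to $\partial_x\tilde R_j^{(1)}$, $i\tilde R_j^{(1)}$, and the negative eigenfunction $\tilde\Psi_j$ itself (condition~\eqref{cond0} and Lemma~\ref{coercivity}). The eigenfunction \emph{is} identified; what is avoided, compared with \cite{cotem}, is a separate control estimate (or topological argument) for the unstable projection: that projection is set to zero by modulating $\tilde\omega_j$, which is legitimate because $(\partial_\omega\Phi_{\omega_j}^{(1)},\Psi_{\omega_j})_{L^2}\neq0$. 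The price is an extra term $\sum_j O(|\tilde\omega_j-\omega_j|^2)$ in the expansion of the localized action (Lemma~\ref{newS}), and closing the bootstrap then requires an \emph{independent} bound on $|\tilde\omega_j(t)-\omega_j|$, obtained from the almost-conserved localized charge $Q_{2,j}$ together with $Q_2(\vec R_j)=-2\omega_j\|\Phi_{\omega_j}\|_{L^2}^2$ (Lemma~\ref{variationomega}). You do not mention this step, and without it the argument does not close.
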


 The main tools used to establish this result combine various techniques, including energy estimates, bootstrap, modulation, and localization arguments. More precisely, to proceed with the proof of the main theorem, we first define a sequence of approximate solutions. After deriving some estimates and taking the limit, we construct the desired solution. Specifically, let ${T^{n}}$ be an increasing sequence converging to infinity. Our strategy involves solving a final-value problem associated with \eqref{system2}, where the final data consists of a pair of solitons at time $T^{n}$. For each $n \in \mathbb{N}$, let $\Vec{u}^{n}$ denote the solution of \eqref{system2} defined on the interval $(T_{n}^{\star}, T^{n}]$, where $T_{n}^{\star}$ is the maximum time of existence for each $n$, and such that $\Vec{u}^{n}(T^{n}) = \Vec{R}(T^{n})$. Unlike the approach in \cite{cotem} for establishing the existence of multisolitons in the Klein–Gordon equation, where the challenge arises from the directions governed by the coercivity property, necessitating the identification of eigenfunctions of the coercivity operator to obtain new control estimates, in this system, due to its structure, we can derive a sharper result. Specifically, directional constraints can be omitted by applying orthogonality arguments, which are derived from localization and modulation techniques. Consequently, we establish the following: 
 \[
\left\langle S_j^{\prime \prime}\left(\vec{R}\right) {\vec\eta}, \vec{\eta}\right\rangle \geq C\|{\vec\eta}\|_{ X }^2,  
\]
under the condition \begin{equation}\label{cond0} \left\langle\vec{\eta}, \partial_x \vec{\tilde{D}}\right\rangle=\left\langle\vec{\eta}, \vec{\tilde{\Gamma}}\right\rangle=\left\langle\vec{\eta}, \vec{\tilde{\Psi}}\right\rangle=0, \end{equation} where
$\vec{\tilde{D}}=\sum_{j=1}^{N}\left(\partial_{x}R_{j}^{(1)},0,0,0\right) $ $\vec{\tilde{\Gamma}}=\sum_{j=1}^{N}\left(iR_{j}^{(1)},0,0,0\right) $  and  $\vec{\tilde{\Psi}}=\sum_{j=1}^{N}\left(\tilde{\Psi}_j,0,0,0\right) $ (See Section \ref{modul-section}). 
			
			We will  establish that our approximate solutions $\Vec{u}^{n}$ indeed satisfy the conclusion of the main theorem.
\begin{proposition}[Uniform Estimates]\label{UniformEstimates}
				There exist $T_{0}\in \R$ and $n_{0} \in\N$ such that, for every $n \geq n_{0}$, each approximate solution $\Vec{u}^{n}$ is defined in $[T_{0},T^{n}]$ and for all $t \in [T_{0},T^{n}]$
				\begin{equation}\label{desiesti}
					\|\Vec{u}^{n}(t)-\Vec{R}(t)\|_{X}\leq e^{-\omega_{\star}^{\frac12}c_{\star} t}.
				\end{equation}
			\end{proposition}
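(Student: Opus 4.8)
The plan is to prove \eqref{desiesti} by a bootstrap (continuity) argument applied to the error $\vec{\eta}^{n}:=\vec{u}^{n}-\vec{R}$, which by the choice of final data satisfies $\vec{\eta}^{n}(T^{n})=0$. Since $\vec{u}^{n}$ solves \eqref{system2} and each $\vec{R}_{j}$ is an exact soliton, $\vec{\eta}^{n}$ obeys a perturbed equation whose source collects only the soliton interaction terms; these are exponentially small in the separation, which grows like $c_{\star}t$ because the speeds $c_{j}$ differ by at least $c_{\star}$, and is quantified through the pointwise decay \eqref{decay}. I would introduce the bootstrap interval as the set of $t^{*}\in[T_{0},T^{n}]$ for which the a priori bound $\norm{\vec{\eta}^{n}(s)}_{X}\leq 2e^{-\omega_{\star}^{1/2}c_{\star}s}$ holds for all $s\in[t^{*},T^{n}]$. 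This set is non-empty and closed by continuity, so the whole argument reduces to showing that on it the sharper bound \eqref{desiesti} in fact holds, which lets the interval open up to a uniform $T_{0}$.

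First I would carry out the modulation step. On the bootstrap interval I modulate the translation and phase parameters $(x_{j}(t),\gamma_{j}(t))$ of each soliton via the implicit function theorem, so that the modulated error satisfies the three orthogonality conditions \eqref{cond0}, namely orthogonality to $\partial_{x}\vec{\tilde{D}}$, $\vec{\tilde{\Gamma}}$ and $\vec{\tilde{\Psi}}$. This is exactly where the structural advantage emphasized in the introduction enters: localization together with modulation supplies enough orthogonality to neutralize the degenerate directions, without having to diagonalize the coercivity operator as in \cite{cotem}. The modulation system then forces the parameter derivatives $\dot{x}_{j}-c_{j}$ and $\dot{\gamma}_{j}$ to be quadratically small, i.e. controlled by $\norm{\vec{\eta}^{n}}_{X}^{2}$ plus exponentially small interaction terms.

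With \eqref{cond0} in force, the coercivity inequality $\scal{S_{j}''(\vec{R})\vec{\eta},\vec{\eta}}\geq C\norm{\vec{\eta}}_{X}^{2}$ from the introduction applies. Next I would build a Lyapunov functional $\mathcal{F}(t)=\sum_{j}S_{j}$, a localized sum of the action functionals adapted to the $j$-th soliton (energy plus the $\omega_{j}$- and $c_{j}$-weighted charge and momentum), with cutoffs transported along the trajectories $x_{j}+c_{j}t$ so that the solitons are isolated. Two estimates drive the argument: an almost-conservation law, obtained by differentiating $\mathcal{F}$ along the flow, in which the cutoff derivatives and soliton interactions produce only terms bounded by $e^{-\mu s}\norm{\vec{\eta}^{n}}_{X}^{2}+e^{-\nu s}$ for rates $\mu,\nu$ governed by $\omega_{\star}$ and $c_{\star}$; and a Taylor expansion of $\mathcal{F}(\vec{u}^{n})$ about $\vec{R}$ whose linear term vanishes up to exponentially small cross terms, because each $\vec{R}_{j}$ is a critical point of $S_{j}$, and whose quadratic term is the coercive form. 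Integrating the almost-conservation law backward from $T^{n}$, where $\vec{\eta}^{n}=0$, to a generic $t$, the coercivity gives
$$ C\,\norm{\vec{\eta}^{n}(t)}_{X}^{2}\leq \scal{\mathcal{F}''(\vec{R})\vec{\eta}^{n}(t),\vec{\eta}^{n}(t)} \lesssim \abso{\mathcal{F}(t)-\mathcal{F}(T^{n})}+(\text{small}) \lesssim \int_{t}^{T^{n}}\big(e^{-\mu s}\norm{\vec{\eta}^{n}(s)}_{X}^{2}+e^{-\nu s}\big)\d s+(\text{small}). $$
Inserting the bootstrap bound under the integral and taking $T_{0}$ large then yields $\norm{\vec{\eta}^{n}(t)}_{X}^{2}\leq e^{-2\omega_{\star}^{1/2}c_{\star}t}$, the improved estimate, closing the bootstrap and producing a $T_{0}$ and $n_{0}$ independent of $n$.

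The step I expect to be the main obstacle is the coercivity together with its compatibility with the moving localization. One must verify that the three modulation directions in \eqref{cond0} really exhaust the kernel of the second variation for the coupled four-field operator — with the complex components $u,\rho$ and the real wave pair $v,n$ interacting through $\psi_{\omega,c}=-\phi^{2}/(1-c^{2})$ — and that the localized functionals $S_{j}$ retain coercivity uniformly despite the transported cutoffs. The delicate quantitative point is to keep every interaction and cutoff error decaying strictly faster than $\norm{\vec{\eta}^{n}}_{X}^{2}$, so the backward integration does not consume the exponential gain; this is precisely what the factor $1/256$ in $\omega_{\star}$ (equivalently the gap $\sqrt{I_{j}}\geq 16\,\omega_{\star}^{1/2}$) and the separation $c_{\star}$ are engineered to guarantee.
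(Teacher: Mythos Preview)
Your proposal is correct and follows essentially the same strategy as the paper: the bootstrap/continuity argument reduces the Proposition to the improvement step (the paper's Proposition~\ref{Bootstrap3}), which in turn is proved via modulation yielding the orthogonality \eqref{cond0}, the localized coercivity (Proposition~\ref{coercivity2}), the Taylor expansion of the localized action $\mathcal{S}$ (Lemmas~\ref{taylorS}--\ref{newS}), and the almost-conservation law (Lemma~\ref{VariacionS}), integrated backward from $T^{n}$. Two small corrections: you must modulate \emph{three} parameters per soliton, $(\tilde{x}_{j},\tilde{\gamma}_{j},\tilde{\omega}_{j})$, not just translation and phase, since there are three orthogonality conditions in \eqref{cond0} (the third, against $\vec{\tilde{\Psi}}$, handles the negative eigendirection and is paired with $\tilde{\omega}_{j}$); and the modulation equations give only \emph{linear} control $|\dot{\tilde{x}}_{j}|+|\dot{\tilde{\omega}}_{j}|\lesssim \|\vec{\varepsilon}\|_{X}+e^{-3\sqrt{\omega_{\star}}c_{\star}t}$ (see \eqref{modulated}), not quadratic, though this is enough since these derivatives do not enter the final energy estimate directly.
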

   
   To demonstrate the previous results, it is important to use a bootstrap argument through which we will improve, in a certain way, the existence interval of the multi-soliton solutions. More specifically,
   \begin{proposition}
       \label{Bootstrap3}
There exist $T_{0}\in \R$ depending only on $c_{j}$ for $j \in \{1,\ldots,N\}$, with $c_{\star}$ as in Theorem \ref{maintheorem}, and $n_{0} \in\N$ such that, if for all $n \geq n_{0}$, every approximate solution $\Vec{u}^n$ is defined on $[T_{0},T^{n}]$, and for all $t \in[t_{0},T^{n}]$, with $t_{0}\in [T_{0},T^{n}]$,
\begin{equation*}
\|\Vec{u}^{n}(t)- \Vec{R}(t)\|_{X}\leq  e^{-\omega_{\star}^{\frac12}c_{\star} t},
\end{equation*}
then for all $t\in[t_{0},T^{n}]$, 
\begin{equation*}
\|\Vec{u}^{n}(t)- \Vec{R}(t)\|_{X}\leq \frac{1}{2} e^{-\omega_{\star}^{\frac12}c_{\star} t}.
\end{equation*}
   \end{proposition}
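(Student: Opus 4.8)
The plan is to close the estimate by a bootstrap built on a localized, almost-conserved functional together with the coercivity recorded above. Throughout, set $\vec{\eta}:=\vec{u}^n-\vec{R}$ and work on $[t_0,T^n]$ under the a priori bound $\|\vec{\eta}(t)\|_X\le e^{-\omega_\star^{1/2}c_\star t}$. First I would pass to a modulated decomposition: introduce time-dependent translation and phase parameters for each bump and write $\vec{u}^n=\vec{\tilde{R}}+\vec{\tilde{\eta}}$, choosing the parameters so that $\vec{\tilde{\eta}}$ satisfies the three orthogonality conditions \eqref{cond0}. Once $T_0$ is large (so $\|\vec{\eta}\|_X$ is small) the implicit function theorem supplies such parameters, keeps them close to the nominal $x_j,\gamma_j$, and gives $\|\vec{\tilde{\eta}}\|_X\lesssim\|\vec{\eta}\|_X$ and conversely; differentiating \eqref{cond0} along \eqref{system2} yields control of the modulation speeds $\dot{\tilde{x}}_j,\dot{\tilde{\gamma}}_j$, which I expect to be exponentially small in $t$. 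The point is to place $\vec{\tilde{\eta}}$ in the range of validity of the coercivity $\langle S_j''(\vec{R})\vec{\tilde{\eta}},\vec{\tilde{\eta}}\rangle\ge C\|\vec{\tilde{\eta}}\|_X^2$.

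Next I would build the Lyapunov functional. Since the speeds $c_j$ are pairwise distinct, the soliton centers separate at a rate comparable to $c_\star t$; introduce smooth cutoffs $\chi_j(x,t)$ equal to $1$ near the $j$-th trajectory $x=x_j+c_j t$ and vanishing in the widening gaps, and set
\[
\mathcal{S}(t)=\sum_{j=1}^N\Big(E_j^{\mathrm{loc}}(t)+\omega_j\,Q_j^{\mathrm{loc}}(t)+c_j\,P_j^{\mathrm{loc}}(t)\Big),
\]
using localized versions of the energy, charge and momentum of \eqref{system2} with the multipliers $(\omega_j,c_j)$ tuned so that each $\vec{R}_j$ is a critical point of the corresponding action $S_j$. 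Expanding $\mathcal{S}$ about $\vec{R}$, the linear term cancels up to exponentially small soliton interactions, and the coercivity of the previous step furnishes the lower bound $\mathcal{S}(t)-\mathcal{S}_{\mathrm{sol}}\ge C\|\vec{\tilde{\eta}}(t)\|_X^2-\mathcal{O}\!\big(e^{-2\omega_\star^{1/2}c_\star t}\big)-\mathcal{O}\!\big(\|\vec{\eta}\|_X^3\big)$.

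The heart of the argument — and the step I expect to be the main obstacle — is the almost-conservation estimate $|\mathcal{S}'(t)|\lesssim e^{-2\omega_\star^{1/2}c_\star t}$. Differentiating $\mathcal{S}$ along \eqref{system2}, the exactly conserved contributions cancel, leaving two families of error terms: those generated by $\partial_t\chi_j$, supported in the soliton gaps where both the profiles (by the exponential decay \eqref{decay}) and the remainder (by the bootstrap hypothesis) are exponentially small; and the soliton–soliton interaction terms, small because the centers are $\gtrsim c_\star t$ apart. The delicate point is to match the localization scale to the decay rate $\sqrt{I}$ of \eqref{decay} so that both families are dominated by $e^{-2\omega_\star^{1/2}c_\star t}$ — this is exactly what the constants $\omega_\star$ (with its factor $1/256$) and $c_\star$ are calibrated for — while keeping the modulation-speed contributions from the first step below the same threshold.

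Finally I would integrate backward and close the loop. The final condition $\vec{\eta}(T^n)=0$ gives $\mathcal{S}(T^n)=\mathcal{S}_{\mathrm{sol}}$ up to negligible interaction, so integrating the bound on $\mathcal{S}'$,
\[
\mathcal{S}(t)-\mathcal{S}(T^n)=-\int_t^{T^n}\mathcal{S}'(s)\,ds,
\]
yields $\mathcal{S}(t)-\mathcal{S}_{\mathrm{sol}}\lesssim e^{-2\omega_\star^{1/2}c_\star t}$. Combined with the coercive lower bound and the a priori smallness of $\|\vec{\eta}\|_X$ (which absorbs the cubic remainder into the quadratic term), this gives $\|\vec{\tilde{\eta}}(t)\|_X^2\le C\,e^{-2\omega_\star^{1/2}c_\star t}$; transferring back to $\vec{\eta}$ produces $\|\vec{\eta}(t)\|_X\le C'e^{-\omega_\star^{1/2}c_\star t}$. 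Choosing $T_0$ large enough that $C'\le\tfrac12$ on $[T_0,T^n]$ delivers the improved bound $\|\vec{\eta}(t)\|_X\le\tfrac12 e^{-\omega_\star^{1/2}c_\star t}$, which is the assertion.
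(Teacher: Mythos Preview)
Your overall strategy is the paper's: modulate to enforce the orthogonality conditions, build a localized action functional from cutoffs around each soliton trajectory, use coercivity of its Hessian, prove an almost-conservation estimate, and integrate backward from $T^n$. Two points, however, are genuine gaps.

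First, translation and phase give only two modulation parameters per soliton, but \eqref{cond0} comprises three independent conditions per soliton --- the third being orthogonality to the negative-eigenfunction direction $\tilde{\Psi}_j$. With only $\tilde{x}_j,\tilde{\gamma}_j$ the implicit function step cannot produce all three, and without orthogonality to $\tilde{\Psi}_j$ the coercivity of Lemma~\ref{coercivity} fails. The paper supplies the missing degree of freedom by also modulating the frequency, $\tilde{\omega}_j$ (Lemma~\ref{lemamodulation}); this introduces an additional error $\sum_j O(|\tilde{\omega}_j-\omega_j|^2)$ in the expansion of $\mathcal{S}$ (Lemma~\ref{newS}), which is then controlled separately through the conservation of $Q_2$ (Lemma~\ref{variationomega}). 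Your sketch omits this entire mechanism.

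Second, the closing step ``choosing $T_0$ large enough that $C'\le\tfrac12$'' does not follow from your stated bound $|\mathcal{S}'(t)|\lesssim e^{-2\omega_\star^{1/2}c_\star t}$: integrating this from $t$ to $T^n$ yields $\|\vec{\tilde{\eta}}(t)\|_X\le C'\,e^{-\omega_\star^{1/2}c_\star t}$ with a fixed constant $C'$ that does not decrease as $T_0\to\infty$, so the bootstrap need not close. The paper's cutoffs have transition width $\sqrt{t}$, so $|\partial_t\psi_j|,|\partial_x\psi_j|\lesssim t^{-1/2}$, and this produces the sharper estimate $|\partial_t\mathcal{S}(\vec u(t),t)|\lesssim t^{-1/2}e^{-2\omega_\star^{1/2}c_\star t}$ (Lemma~\ref{VariacionS}); it is precisely this extra $t^{-1/2}$ that yields $\|\vec{\varepsilon}(t)\|_X^2\le C\,t^{-1/2}e^{-2\omega_\star^{1/2}c_\star t}$ and lets a large $T_0$ absorb the constant.
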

From this, in the following section, we will seek to construct conditions that allow us to prove the previous lemma.

We close this section by establishing a result that ensures the well-posedness of system \eqref{system2}.  

Notice that the solutions $\vec u(t)$ of \eqref{system2} with the initial data $\vec u_0=(u_0,\rho_0,v_0,n_0)^T$ can be equivalently found from the following integral equality
   \begin{equation}\label{integral-form}
       \vec u(t)=\begin{pmatrix}
           u(t)\\
           \rho(t)\\
           v(t)\\n(t)
       \end{pmatrix}
       =\bm{G}(t)\vec u_0+\int_0^t\bm{G}(t-t')\vec F(\vec u(t'))\dd t',
   \end{equation}
   where $\bm{G}={\rm diagonal}(\bm{G}_1(t),\bm{G}_2(t))$,
   \[  
   \widehat{\bm{G}}_1(t)=
   \begin{pmatrix}
    \cos(t\sqrt{1+\xi^2})&-\frac{1}{ \sqrt{1+\xi^2}}\sin(t\sqrt{1+\xi^2})\\
    \sqrt{1+\xi^2}\sin(t\sqrt{1+\xi^2})&\cos(t\sqrt{1+\xi^2})
   \end{pmatrix},
   \]
   \[
   \widehat{\bm{G}}_2(t)=
   \begin{pmatrix}
    \cos(t\xi)&\ii\sin(t\xi)\\
    \ii\sin(t\xi)&\cos(t\xi)
   \end{pmatrix},
   \]
   and
  $\vec F(\vec u)=(0,\al uv+\beta|u|^2u,0,(|u|^2)_x)^T$.
  \begin{theorem}\label{wellpo}
   Let $s>1/2$, $r\in\rr$ and $Y^s=H^s(\rr)\,(\text{or}\,\dot{H}^s(\rr))$.  For any initial data $\vec u_0\in X^{s,r} :=Y^{s} \times Y^{s-1} \times Y^{r} \times Y^{r}  $, there exists a time existence $T>0$ such that \eqref{system2} is well-posed in $X^{s,r}$ provided that   $(r,s)\in \sett{r< s, \;r+1/2\leq2s}\cup  \sett{r< s, \;r+1/2\leq2s}$ and
   $(r,s)\in \sett{s\leq r+1, \;r>-1/2}\cup  \sett{s< r+1, \;r\geq-1/2}$. 
      Moreover, $Q_1$, $Q_2$, and the energy functional  $E$ are time-invariant, where
\begin{equation}\label{ener1}
E( \vec u)=\int_{\R}\left(|u|^{2}+\left|\rho^{2}\right|+\left|u_{x}\right|^{2}+\alpha|u|^{2} v+\frac{\beta}{2}|u|^{4}+\frac{\alpha}{2} v^{2}+\frac{\alpha}{2} n^{2}\right) \, d x \end{equation}
for the solution $\vec u=(u,\rho,v,n)^T$.
  \end{theorem}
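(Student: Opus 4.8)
The plan is to prove Theorem \ref{wellpo} by a standard contraction-mapping argument applied to the integral formulation \eqref{integral-form}, followed by a conservation-law computation. First I would analyze the linear propagator $\bm{G}(t)$: since $\widehat{\bm{G}}_1(t)$ and $\widehat{\bm{G}}_2(t)$ are given explicitly as Fourier multipliers, I would verify that $\bm{G}(t)$ is a bounded (indeed unitary-type) operator on $X^{s,r}$. The entries of $\widehat{\bm{G}}_1$ involve $\cos(t\sqrt{1+\xi^2})$ and $\tfrac{1}{\sqrt{1+\xi^2}}\sin(t\sqrt{1+\xi^2})$, so the off-diagonal multipliers shift regularity by one derivative in exactly the way matched by the pairing $Y^s\times Y^{s-1}$; similarly $\widehat{\bm{G}}_2$ is a bounded multiplier on $Y^r\times Y^r$ because its entries are bounded by $1$ in modulus. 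This is the step that explains the admissible range of $(r,s)$ stated in the theorem.

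Next I would set up the fixed-point map
\[
\Lambda(\vec u)(t)=\bm{G}(t)\vec u_0+\int_0^t\bm{G}(t-t')\vec F(\vec u(t'))\,\dd t',
\]
on a ball in $C([0,T];X^{s,r})$, and estimate the Duhamel term. The key is to control $\vec F(\vec u)=(0,\al uv+\beta|u|^2u,0,(|u|^2)_x)^T$ in the spaces dictated by $\bm{G}$: the component $\al uv+\beta|u|^2u$ must lie in $Y^{s-1}$ and $(|u|^2)_x$ must lie in $Y^{r-1}$ (the latter being where the derivative loss is absorbed by the smoothing built into $\widehat{\bm{G}}_2$, equivalently by the dispersive structure of the wave part). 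Using the algebra and product inequalities for $H^s(\rr)$ with $s>1/2$ (Sobolev embedding into $L^\infty$, so $H^s$ is a Banach algebra), together with the fractional Leibniz rule for the cubic term, I would obtain bounds of the form $\|\vec F(\vec u)\|\leq C(\|\vec u\|_{X^{s,r}})\|\vec u\|_{X^{s,r}}$ with a constant depending polynomially on the norm. Choosing $T$ small in terms of $\|\vec u_0\|_{X^{s,r}}$ then makes $\Lambda$ a contraction on a suitable ball, yielding local existence, uniqueness, and continuous dependence.

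The main obstacle will be the nonlinear term $(|u|^2)_x$ appearing in the fourth component: it carries a derivative and must be controlled in the low-regularity space $Y^{r}$ (after one Duhamel integration, effectively $Y^{r-1}$). The paired constraints $r+1/2\le 2s$ and the relations between $r$ and $s$ are precisely the bookkeeping needed so that $u,v\in Y^s,Y^r$ produce $|u|^2\in Y^{\min(2s-1/2,\,\dots)}$ with enough room for the derivative; making these product estimates close is where care is required, and I would handle it by a Kato–Ponce/fractional Leibniz estimate combined with the critical Sobolev index $s>1/2$. One subtlety is that the stated range for $(r,s)$ as written contains a duplicated set and an apparent typo, so I would state the admissible region cleanly and verify each product estimate on it.

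Finally, for the conservation laws I would argue that $Q_1,Q_2$ and $E$ are formally conserved by differentiating in time and using \eqref{system2}: multiplying the equations by the appropriate conjugates, integrating by parts, and cancelling, one checks $\tfrac{d}{dt}E(\vec u)=0$ and likewise for the charges $Q_1,Q_2$ (mass/momentum-type quantities). On the smooth solutions produced by the contraction this computation is rigorous; for general data in $X^{s,r}$ one passes to the limit by the continuous dependence established above, approximating by smooth data and using the continuity of $E,Q_1,Q_2$ on $X^{s,r}$. The only technical point is justifying the integrations by parts, which follows from the regularity $s>1/2$ and a standard density argument.
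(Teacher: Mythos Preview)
The paper does not give a self-contained argument here; it simply records that the result follows by combining \cite[Theorem~1]{hss2013} and \cite[Theorem~1.1]{Nakoz}, invoking the unitary character of $\bm G$. Your proposal is therefore more detailed than what the paper actually supplies, and the overall scheme---Duhamel formulation \eqref{integral-form}, contraction in $C([0,T];X^{s,r})$, then conservation laws via a formal computation on smooth solutions followed by a density/continuous-dependence argument---is the correct template and is essentially the route taken in the cited references.

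One point deserves correction. Your remark that ``the derivative loss is absorbed by the smoothing built into $\widehat{\bm G}_2$'' is not accurate at the level of fixed-time Sobolev norms. The entries of $\widehat{\bm G}_2$ are $\cos(t\xi)$ and $i\sin(t\xi)$, bounded multipliers with no gain; the Duhamel contribution to $(v,n)$ from the forcing $(0,(|u|^2)_x)$ lies in $H^\sigma\times H^\sigma$ only if $(|u|^2)_x\in L^1_T H^\sigma$. With $u\in C_T H^s$ and $s>1/2$ the algebra property gives $|u|^2\in C_T H^s$, hence $(|u|^2)_x\in C_T H^{s-1}$ and no better. A contraction run purely in $C([0,T];X^{s,r})$ therefore closes only at $r=s-1$; this is precisely the regularity obtained in \cite{hss2013} and is all that the remainder of the paper uses (the working space is $X=X^{1,0}$). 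If one insists on reading the stated range as allowing $r>s-1$, that would require Bourgain-type $X^{s,b}$ spaces adapted to the wave symbol and bilinear estimates there---Kato--Ponce/fractional Leibniz in plain Sobolev spaces will not manufacture the missing half-derivative. Your treatment of the $uv$ term in $H^{s-1}$ (which is exactly where the constraints $s\le r+1$, $r>-1/2$ come from), of the Klein--Gordon block via the $\langle\xi\rangle^{\pm1}$ off-diagonals of $\widehat{\bm G}_1$, and of the conservation laws is correct.
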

  \begin{proof}
According the unitary group of $\bm{G}$, the proof is derived by combining the results given in \cite[Theorem 1]{hss2013} and
 \cite[Theorem 1.1]{Nakoz}.
\end{proof}

 Using the above notations, we see that   $X^{1,0}=X$.

\begin{remark}
Suppose $s$ is a nonnegative integer.  By employing the so-called continuous method and delicate a priori estimates, it was shown in \cite{zsp} (see also \cite{guoyua}) that for any fixed $\Vec{u}_{0}=\left(u_{0}, \rho_{0}, v_{0}, n_{0}\right) \in H^{s+1}(\R) \times H^{s}(\R) \times H^{s}(\R) \times H^{s}(\R)$, there exists a unique smooth solution globally in time for \eqref{system2}, which satisfies 
\[(u(x, t), \rho(x, t), v(x, t), n(x, t)) \in L^{\infty}\left([0, T] ; H^{s+1}(\R)\times H^{s}(\R)\times H^{s}(\R)\times H^{s}(\R)\right)  
\]
with $(u(0), \rho(0), v(0), n(0))=\left(u_{0}, \rho_{0}, v_{0}, n_{0}\right)$. 
\end{remark}
\section{Construction of the solution }
This section has the sole purpose of proving Proposition \ref{Bootstrap3}, using some localization and parameter modulation arguments. In fact, to construct the multi-solitons, we note that \eqref{system2} can be written as the following Hamiltonian system
\begin{equation*}
\frac{d \Vec{u}}{d t}=J E^{\prime}(\Vec{u}) ,
\end{equation*}
where $\Vec{u}=(u, \rho, v, n)$, $J$ is a skew-symmetrically linear operator defined by
$$
J=\left(\begin{array}{cccc}
0 & -\frac{1}{2} & 0 & 0 \\
\frac{1}{2} & 0 & 0 & 0 \\
0 & 0 & 0 & \frac{1}{\alpha} \partial x \\
0 & 0 & \frac{1}{\alpha} \partial x & 0
\end{array}\right)
$$

and

\begin{equation}\label{energy1}
 E^{\prime}(\Vec{u})=\left(\begin{array}{c}
-2 u_{x x}+2 u+2 \alpha u v+2 \beta|u|^{2} u  \tag{34}\\
2 \rho \\
\alpha|u|^{2}+\alpha v \\
\alpha n
\end{array}\right) . 
\end{equation}

Differentiating \eqref{energy1} with respect to $\vec u$, we have
\begin{equation}\label{energy2}
 E^{\prime \prime}(\Vec{u}) \vec{\eta}=\left(\begin{array}{c}
\left(-\partial_{x}^{2}+2+2 \alpha v+2 \beta|u|^{2}\right) \eta_{1}+4 \beta u R e\left(u \overline{\eta_{1}}\right)+2 \alpha u \eta_{3}  \tag{35}\\
2 \eta_{2} \\
2 \alpha \Re\left(u \overline{\eta_{1}}\right)+\alpha \eta_{3} \\
\alpha \eta_{4}
\end{array}\right)   
\end{equation}
where   $\vec{\eta}=\left(\eta_{1}, \eta_{2}, \eta_{3}, \eta_{4}\right)$.
We consider 

$$
B_{1}=\left(\begin{array}{cccc}
0 & -2 \partial_x & 0 & 0 \\
2 \partial_x & 0 & 0 & 0 \\
0 & 0 & 0 & -\alpha \\
0 & 0 & -\alpha & 0
\end{array}\right) \text { and } B_{2}=\left(\begin{array}{cccc}
0 & -2 i & 0 & 0 \\
2 i & 0 & 0 & 0 \\
0 & 0 & 0 & 0 \\
0 & 0 & 0 & 0
\end{array}\right)
$$

and  we define the conserved functionals $Q_{1}$ and $Q_{2}$ as the following
\begin{equation}\label{mom1}
Q_{1}(\Vec{u})=\frac{1}{2}\left\langle B_{1} \Vec{u}, \Vec{u}\right\rangle=2 \Re \int_{\R} u_{x} \bar{\rho}\,dx-\alpha \int_{\R} n v \,d x
\end{equation}
and
\begin{equation}\label{mom2}
Q_{2}(\Vec{u})=\frac{1}{2}\left\langle B_{2} \Vec{u}, \Vec{u}\right\rangle=2 \Im \int_{\R} \bar{u} \rho \,d x.
\end{equation}
Hence, for any $t \in \R$, we have $\Vec{u}(t)$ is a flow of \eqref{system2},
\begin{equation}\label{mom1and2}
Q_{1}(\Vec{u}(t))=Q_{1}(\Vec{u}(0)) \, \, \text { and } \, \,  Q_{2}(\Vec{u}(t))=Q_{2}(\Vec{u}(0)).
\end{equation}
Differentiating \eqref{mom1and2} with respect to $\Vec{u}$, respectively, we have
$$
Q_{1}^{\prime}(\Vec{u})=B_{1} \Vec{u}=\left(\begin{array}{c}
-2 \rho_{x} \\
2 u_{x} \\
-\alpha n \\
-\alpha v
\end{array}\right), \quad Q_{1}^{\prime \prime}(\Vec{u})=B_{1}=\left(\begin{array}{cccc}
0 & -2 \partial_x & 0 & 0 \\
2 \partial_x & 0 & 0 & 0 \\
0 & 0 & 0 & -\alpha \\
0 & 0 & -\alpha & 0
\end{array}\right)
$$
and
$$
Q_{2}^{\prime}(\Vec{u})=B_{2} \Vec{u}=\left(\begin{array}{c}
-2 i \rho  \\
2 i u \\
0 \\
0
\end{array}\right), \quad Q_{2}^{\prime \prime}(\Vec{u})=B_{2}=\left(\begin{array}{cccc}
0 & -2 i & 0 & 0 \\
2 i & 0 & 0 & 0 \\
0 & 0 & 0 & 0 \\
0 & 0 & 0 & 0
\end{array}\right).
$$

			Now, we define some operators that will be used later on. Indeed,
			let us $\mathcal{S}_{j}:  X \to \R$ the functional defined by
			\begin{equation}\label{defSj}
				\mathcal{S}_{j}(\Vec{u}):=E(\Vec{u})-\omega_j Q_2(\Vec{u})-c_j Q_1(\Vec{u}), \, \, \quad \text{$j=1,\cdots, N$}.
			\end{equation}

			Note that by the Sobolev embedding, these functionals are well-defined.
  
We denote $\Vec{R}_j=(R_j^{(1)}, R_j^{(2)}, R_j^{(3)}, R_j^{(4)})$ as in \eqref{multisol}, then it follows from \eqref{system3} that
\begin{equation}\label{pointcrit}
  \mathcal{S}_j^{\prime}(\Vec{R}_j)=\left(\begin{array}{c}
2 R_j^{(1)}-2 \partial_{xx}R_j^{(1)}+2 \alpha R_j^{(1)} R_j^{(3)}+2 \beta|R_j^{(1)}|^{2} R_j^{(1)}+2 c_j \partial_{x}R_j^{(2)}+2 i \omega_j R_j^{(1)} \\
2R_j^{(2)}-2 c_j \partial_{x}R_j^{(1)}-2 i \omega_j R_j^{(1)} \\
\alpha|R_j^{(1)}|^{2}+\alpha R_j^{(3)}+\alpha c_j R_j^{(4)} \\
\alpha R_j^{(4)}+\alpha c_j R_j^{(3)}
\end{array}\right)=\left(\begin{array}{l}
0 \\
0 \\
0 \\
0
\end{array}\right).  
\end{equation}
Now we define, for all $j \in \{1,\dots,N\}$, an operator $H_{j}$ from $X$ to $X^{\star}$ by
\begin{equation}\label{defHj}
H_{j}=\mathcal{S}_j^{\prime \prime}(\Vec{R}_j), 
\end{equation}

that is,
$$
\begin{aligned}
H_{j} \vec{\eta}&=  \left(\begin{array}{c}
2\left(-\partial_{x}^{2}+1+\alpha R_j^{(3)}+\beta|R_j^{(1)}|^{2}\right) \eta_{1}+4 \beta R_j^{(1)} \R e\left(R_j^{(1)} \overline{\eta_{1}}\right)+2 \alpha R_j^{(1)} \eta_{3}+2 c_j \partial_{x}\eta_{2}+2 i \omega_j \eta_{2} \\
2\left(\eta_{2}-c_j \partial_{x} \eta_{1}-i \omega_j \eta_{1}\right) \\
2 \alpha \Re\left(R_j^{(1)}\overline{\eta_{1}}\right)+\alpha \eta_{3}+\alpha c_j \eta_{4} \\
\alpha \eta_{4}+\alpha c_j \eta_{3}
\end{array}\right), 
\end{aligned}
$$
where $\vec{\eta}=\left(\eta_{1}, \eta_{2}, \eta_{3}, \eta_{4}\right)$. 

 Note initially from \eqref{system3}  for all $j \in \{1,\dots,N\}$ that
$$
\begin{aligned}
&H_{j} \partial_{x}\vec{R}_{j}\\&=  \left(\begin{array}{c}
2\left(-\partial_{x}^{2}+1+\alpha R_j^{(3)}+\beta|R_j^{(1)}|^{2}\right) \partial_{x}R_{j}^{(1)}+4 \beta R_j^{(1)} \Re\left(R_j^{(1)} \partial_{x}\overline{R_{j}^{(1)}}\right)+2 \alpha R_j^{(1)} \partial_{x}R_{j}^{(3)}+2 c_j \partial_{x}^2 R_{j}^{(2)}+2 i \omega_j \partial_{x}R_{j}^{(2)} \\
2\left(\partial_{x}R_{j}^{(2)}-c_j  \partial_{x}^2 R_{j}^{(1)}-i \omega_j \partial_{x}R_{j}^{(1)}\right) \\
2 \alpha \Re\left(R_j^{(1)}\partial_{x}\overline{R_{j}^{(1)}}\right)+\alpha \partial_{x}R_{j}^{(3)}+\alpha c_j \partial_{x}R_{j}^{(4)} \\
\alpha \partial_{x}R_{j}^{(4)}+\alpha c_j \partial_{x}R_{j}^{(3)}
\end{array}\right)\\
&=  \left(\begin{array}{c}
0\\0\\0\\0
\end{array}\right),
\end{aligned}
$$
and similarly, for all $j \in \{1,\dots,N\}$,  we have that for $\vec{Y}_j=(iR_j^{(1)},iR_j^{(2)},0,0)$ we have
$$
\begin{aligned}
&H_{j} \vec{Y}_j\\&=  \left(\begin{array}{c}
2\left(-\partial_{x}^{2}+1+\alpha R_j^{(3)}+\beta|R_j^{(1)}|^{2}\right) iR_{j}^{(1)}+4 \beta R_j^{(1)} \Re\left(R_j^{(1)} \overline{iR_{j}^{(1)}}\right)+2 ic_j \partial_{x} R_{j}^{(2)}-2  \omega_j R_{j}^{(2)} \\
2\left(iR_{j}^{(2)}-c_j i   R_{j}^{(1)}+\omega_j R_{j}^{(1)}\right) \\
2 \alpha \Re\left(R_j^{(1)}\overline{iR_{j}^{(1)}}\right) \\
0
\end{array}\right)\\
&=  \left(\begin{array}{c}
0\\0\\0\\0
\end{array}\right).
\end{aligned}
$$
Then based on Lemma 2 of \cite{yin}, we can establish the following result. 
 
\begin{lemma}\label{kernel}
We have for system \eqref{system1} that, for all $j \in \{1,\dots,N\}$, 
    $\mathcal{S}_j^{\prime}(\vec{R}_j)=0$
    and $$Ker(H_j)=\text{span}\{\partial_{x}\vec{R}_j,\vec{Y}_{j}\},
    $$
    where $\mathcal{S}_j, H_j$ are defined by \eqref{defSj} and \eqref{defHj}, respectively.
\end{lemma}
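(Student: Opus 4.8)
The first assertion $\mathcal{S}_j'(\vec R_j)=0$ and the inclusion $\mathrm{span}\{\partial_x\vec R_j,\vec Y_j\}\subseteq\mathrm{Ker}(H_j)$ are already in hand: the former is precisely the computation \eqref{pointcrit}, whose four lines are the profile equations \eqref{system3} satisfied by $\vec R_j$, and the latter follows from the two direct verifications $H_j\partial_x\vec R_j=0$ and $H_j\vec Y_j=0$ carried out above. To see that these two null vectors are independent I would compare first components: $\partial_x R_j^{(1)}=e^{\ii\theta_j x}(\phi_{\omega_j,c_j}'+\ii\theta_j\phi_{\omega_j,c_j})$ versus $\ii R_j^{(1)}=e^{\ii\theta_j x}(\ii\phi_{\omega_j,c_j})$; after stripping the common gauge $e^{\ii\theta_j x}$ these are $\phi_{\omega_j,c_j}'+\ii\theta_j\phi_{\omega_j,c_j}$ and $\ii\phi_{\omega_j,c_j}$, whose real parts are $\phi_{\omega_j,c_j}'$ and $0$, so they cannot be proportional. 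Hence everything reduces to the reverse inclusion $\mathrm{Ker}(H_j)\subseteq\mathrm{span}\{\partial_x\vec R_j,\vec Y_j\}$.

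For the reverse inclusion the plan is to decouple $H_j\vec\eta=0$ into a single scalar equation for $\eta_1$. Reading the last three rows of $H_j\vec\eta=0$ from the bottom, the fourth gives $\eta_4=-c_j\eta_3$; inserting this into the third yields $(1-c_j^2)\eta_3=-2\Re(R_j^{(1)}\overline{\eta_1})$, so that $\eta_3=-\tfrac{2}{1-c_j^2}\Re(R_j^{(1)}\overline{\eta_1})$ and $\eta_4=\tfrac{2c_j}{1-c_j^2}\Re(R_j^{(1)}\overline{\eta_1})$; the second gives $\eta_2=c_j\partial_x\eta_1+\ii\omega_j\eta_1$. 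Substituting these into the first row closes the system into a single complex second-order equation $\mathcal L_j\eta_1=0$. I would then remove the oscillation by setting $\eta_1=e^{\ii\theta_j x}\zeta$; since $R_j^{(1)}=e^{\ii\theta_j x}\phi_{\omega_j,c_j}$ with $\phi_{\omega_j,c_j}$ real, even and exponentially decaying (Proposition \ref{decaimentoquadratico}), the term $\Re(R_j^{(1)}\overline{\eta_1})$ becomes $\phi_{\omega_j,c_j}\Re\zeta$ and $\mathcal L_j$ turns into a linearization of the profile equation \eqref{equation1} about $\phi_{\omega_j,c_j}$, which decouples on $\Re\zeta$ and $\Im\zeta$ into two real one-dimensional Schrödinger operators $L_\pm$ of standard NLS type.

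From here the argument is exactly that of Lemma 2 of \cite{yin}: for the real part, $L_+$ carries the $\operatorname{sech}^2$ potential well of \eqref{equation1} and, by Sturm--Liouville theory, has a one-dimensional $L^2$ kernel spanned by the translation mode $\phi_{\omega_j,c_j}'$ (the unique null state with a single sign change); for the imaginary part, $L_-$ admits $\phi_{\omega_j,c_j}$ itself as its ground state and thus a one-dimensional $L^2$ kernel, the phase mode. The exponential decay of $\phi_{\omega_j,c_j}$ guarantees that the reconstructed $\eta_2,\eta_3,\eta_4$ all lie in the energy space $X$, so no non-$L^2$ null states survive. Undoing the gauge, $\ker\mathcal L_j=\mathrm{span}\{\phi_{\omega_j,c_j}',\ii\phi_{\omega_j,c_j}\}$ lifts through the reconstruction formulas for $\eta_2,\eta_3,\eta_4$ to exactly $\mathrm{span}\{\partial_x\vec R_j,\vec Y_j\}$, giving the reverse inclusion.

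The main obstacle I anticipate is algebraic rather than conceptual, and it is what must be checked before the spectral results of \cite{yin} apply. First, because $\eta_3$ is slaved to $\Re(R_j^{(1)}\overline{\eta_1})$, substituting it back shifts the potential of the Schrödinger operator on $\eta_1$, so one has to confirm that the effective potential is exactly the scalar linearization of \eqref{equation1}. Second, the reconstructed $\eta_2$ contributes the terms $2c_j\partial_x\eta_2+2\ii\omega_j\eta_2$, which carry first-order derivatives; after the gauge $\eta_1=e^{\ii\theta_j x}\zeta$ these combine with the transport terms so that, precisely for $\theta_j=\omega_jc_j/(1-c_j^2)$, the net first-order (imaginary) coefficient cancels and the operator becomes a genuine self-adjoint pair $L_\pm$. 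Verifying this cancellation and the potential matching is the crux; once it is done, the Sturm--Liouville and ground-state characterizations of the null spaces of $L_\pm$ close the kernel dimension at exactly two.
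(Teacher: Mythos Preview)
Your proposal is correct and follows the same route as the paper, which simply defers to Lemma~2 of \cite{yin}: the reduction you outline---solving rows 2--4 of $H_j\vec\eta=0$ for $\eta_2,\eta_3,\eta_4$ in terms of $\eta_1$, removing the gauge via $\eta_1=e^{\ii\theta_j x}\zeta$, and decoupling into the real scalar operators $L_j^1,L_j^2$---is exactly the decomposition the paper carries out immediately after the lemma for the quadratic form \eqref{form2}, and your Sturm--Liouville conclusions on $\ker L_j^1=\mathrm{span}\{\partial_x\phi_{\omega_j}\}$ and $\ker L_j^2=\mathrm{span}\{\phi_{\omega_j}\}$ match the paper's spectral discussion verbatim. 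The cancellation of the first-order term you flag as the crux does indeed occur precisely when $\theta_j=\omega_jc_j/(1-c_j^2)$, and the effective potentials line up with $L_j^1,L_j^2$ as defined in the paper.
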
 

 In addition, for $ j \in \{1,\dots,N\}$ fixed, observe that $H_{j}$ is self-adjoint in the sense that $H_{j}^{\star}=H_{j}$. Then, the spectrum of $H_{j}$ consists of the real numbers $\lambda$ such that $H_{j}-\lambda I$ is not invertible. We claim that $\lambda=0$ belongs to the spectrum of $H_{j}$. 

We fix $j \in \N$, then for any $\vec{\eta} \in X$, where $\eta_{1}=e^{i\lambda_j(x,t)}\left(\eta_{11}+i \eta_{12}\right), \eta_{2}=e^{i\lambda_j(x,t)}\left(\eta_{21}+i \eta_{22}\right)$ where $\lambda_j(x,t)=\frac{\omega_j c_j}{1-c_j^{2}} (x-c_jt) +(\gamma_j-\omega_jt)$. Then,  we have
$$
\begin{aligned}
&\left\langle H_{j} \vec{\eta}, \vec{\eta}\right\rangle\\&= 2 \Re \int_{\R}\left[\left(-\partial_{x}^{2} \eta_{1}+\eta_{1}+\alpha R_j^{(3)} \eta_{1}+\beta|R_j^{(1)}|^{2} \eta_{1}+2 \beta R_j^{(1)} \Re\left(R_j^{(1)} \overline{\eta_{1}}\right)\right) \overline{\eta_{1}}+2\left(i \omega_j+c_j \partial_{x}\right) \eta_{2} \overline{\eta_{1}}+\eta_{2} \overline{\eta_{2}}\right] \,d x \\
&\quad+4 \alpha \Re \int_{\R} R_j^{(1)} \tilde{\eta_{1}} \eta_{3} \,d x+\alpha \int_{\R}\left(\eta_{3}^{2}+2 c_j \eta_{3} \eta_{4}+\eta_{4}^{2}\right) \,d x \\
&= 2 \Re \int_{\R}\left[-\left(1-c_j^{2}\right) \partial_{x}^{2} \eta_{1}+\eta_{1}+2 i \omega_j c_j \partial_{x} \eta_{1}+\left(\beta (\phi_{\omega_j})^{2}-\frac{\alpha (\phi_{\omega_j})^{2}}{1-c_j^{2}}\right) \eta_{1}-\omega_j^{2} \eta_{1}+2 \beta R_j^{(1)} \Re\left(R_j^{(1)} \overline{\eta_{1}}\right)\right] \overline{\eta_{1}} \,d x \\
&\quad+2 \int_{\R}\left|\eta_{2}-\left(i \omega_j+c_j \partial_{x}\right) \eta_{1}\right|^{2}\, d x+4 \alpha \Re \int_{\R} R_j^{(1)} \overline{\eta_{1}} \eta_{3}\, d x+\alpha \int_{\R}\left[\left(c_j \eta_{3}+\eta_{4}\right)^{2}+\left(1-c_j^{2}\right) \eta_{3}^{2}\right] \,d x \\   
&= 2 \Re \int_{\R}\left[-\left(1-c_j^{2}\right) \partial_{x}^{2} z_{1}+\frac{1-c_j^{2}-\omega_j^{2}}{1-c_j^{2}} z_{1}-\frac{\left(\alpha-\beta\left(1-c_j^{2}\right)\right) \phi_{\omega_j}^{2}}{1-c_j^{2}} z_{1}+2 \beta \phi_{\omega_j} \Re\left(\phi_{\omega_j} \overline{z_{1}}\right)\right] \overline{z_{1}} \,d x \\
&\quad+2 \int_{\R}\left|z_{2}-\frac{i \omega_j}{1-c_j^{2}} z_{1}-c_j \partial_{x} z_{1}\right|^{2} \,d x+4 \alpha \Re\int_{\R} \overline{z_{1}}\phi_{\omega_j} \eta_{3} \,d x+\alpha \int_{\R}\left[\left(c_j \eta_{3}+\eta_{4}\right)^{2}+\left(1-c_j^{2}\right) \eta_{3}^{2}\right] \,d x,
\end{aligned}
$$
where we are considering $\phi_{\omega_j,c_j}=\phi_{\omega_j}$ and the fact that $\psi_{\omega_j,c_j}=-\frac{1}{1-c_j^2}(\phi_{\omega_j,c_j})^2$.
Therefore,
$$
\begin{aligned}
 &\left\langle H_{j} \vec{\eta}, \vec{\eta}\right\rangle \\
&= 2 \int_{\R}\left[-\left(1-c_j^{2}\right) \partial_{x}^{2} y_{1}+\frac{1-c_j^{2}-\omega^{2}}{1-c_j^{2}} y_{1}-\frac{3\left(\alpha-\beta\left(1-c_j^{2}\right)\right) \phi_{\omega_j}^{2}}{1-c_j^{2}} y_{1}\right] y_{1}\, d x+2 \int_{\R}\left[-\left(1-c_j^{2}\right) \partial_{x}^{2} y_{2}\right. \\
&\left.\quad+\frac{1-c_j^{2}-\omega_j^{2}}{1-c_j^{2}} y_{2}-\frac{\left(\alpha-\beta\left(1-c_j^{2}\right)\right) \phi_{\omega_j}^{2}}{1-c_j^{2}} y_{2}\right] y_{2} \,d x+2 \int_{\R}\left|z_{2}-\frac{i \omega}{1-c_j^{2}} z_{1}-c_j \partial_{x} z_{1}\right|^{2}\, d x \\
&\quad +\alpha \int_{\R}\left[\frac{4 \phi_{\omega_j}^{2} y_{1}^{2}}{1-c_j^{2}}+4 \phi_{\omega_j} y_{1} \eta_{3}+\left(1-c_j^{2}\right) \eta_{3}^{2}+\left(c_j \eta_{3}+\eta_{4}\right)^{2}\right]\, d x \\
&= 2\left\langle L_{j}^1 y_{1}, y_{1}\right\rangle+2\left\langle L_{j}^2 y_{2}, y_{2}\right\rangle+2 \int_{\R}\left|z_{2}-\frac{i \omega_j}{1-c_j^{2}} z_{1}-c_j \partial_{x} z_{1}\right|^{2}\, d x+\alpha \int_{\R}\left[\left(c_j \eta_{3}+\eta_{4}\right)^{2}\right. \\
&\left.\quad+\left(\frac{2 \phi_{\omega_j} y_{1}}{\sqrt{1-c_j^{2}}}+\sqrt{1-c_j^{2}} \eta_{3}\right)^{2}\right] \,d x,
\end{aligned}$$
where $$L_{j}^1=-\left(1-c_j^{2}\right) \partial_{x}^{2}+\frac{1-c_j^{2}-\omega_j^{2}}{1-c^{2}}-\frac{3\left(\alpha-\beta\left(1-c_j^{2}\right)\right) \phi_{\omega_j}^{2}}{1-c_j^{2}}$$ and $$L_{j}^2=-\left(1-c_j^{2}\right) \partial_{x}^{2}+\frac{1-c_j^{2}-\omega^{2}}{1-c_j^{2}}-\frac{\left(\alpha-\beta\left(1-c_j^{2}\right)\right) \phi_{\omega_j}^2}{1-c_j^{2}}.$$ 
Hence,
 \begin{equation}\label{form2}
\begin{aligned}
        \left\langle H_{j} \vec{\eta}, \vec{\eta}\right\rangle&= 2\left\langle L_{j}^1 y_{1}, y_{1}\right\rangle+2\left\langle L_{j}^2 y_{2}, y_{2}\right\rangle+2 \int_{\R}\left|z_{2}-\frac{i \omega_j}{1-c_j^{2}} z_{1}-c_j \partial_{x} z_{1}\right|^{2}\, d x+\alpha \int_{\R}\left[\left(c_j \eta_{3}+\eta_{4}\right)^{2}\right. \\
&\left.\quad+\left(\frac{2 \phi_{\omega_j} y_{1}}{\sqrt{1-c_j^{2}}}+\sqrt{1-c_j^{2}} \eta_{3}\right)^{2}\right] \,d x.
\end{aligned}
\end{equation} 
By Weyl's essential spectral theorem, the essential spectra of $L_{j}^1$ and $L_{j}^2$ are $\sigma_{\text {ess}} L_{j}^1=\left[\frac{1-c_j^{2}-\omega_j^{2}}{1-c_j^{2}},+\infty\right)$ and $\sigma_{\text {ess }} L_{j}^2=\left[\frac{1-c_j^{2}-\omega_j^{2}}{1-c_j^{2}},+\infty\right)$, respectively.

Notice that by differentiating \eqref{equation1} with respect to $x$, we have
\begin{equation}
\left(\left(1-c_j^{2}\right) \partial_{x}^{2}-\frac{1-c_j^{2}-\omega_j^{2}}{1-c_j^{2}}+\frac{3\left(\alpha-\beta\left(1-c_j^{2}\right)\right) \phi_{\omega_j}^{2}}{1-c_j^{2}}\right) \partial_{x}\phi_{\omega_j}=0.
\end{equation}
Then we obtain $L_{j}^1 \partial_{x}\phi_{\omega_j}=0$. Moreover, from \eqref{equation1}, we have the following
\begin{equation}
\left(\left(1-c_j^{2}\right) \partial_{x}^{2}-\frac{1-c_j^{2}-\omega_j^{2}}{1-c_j^{2}}+\frac{\left(\alpha-\beta\left(1-c_j^{2}\right)\right) \phi_{\omega_j}^{2}}{1-c_j^{2}}\right) \phi_{\omega_j}=0,
\end{equation}
whence, $L_{j}^2 \phi_{\omega_j}=0$.
Since $\partial_x \phi_{\omega_j}$ has a unique zero at $x=0$, by using the Sturm-Liouville theorem, we know that zero is the second eigenvalue of $L_{j}^1$, and $L_{j}^1$ has exactly one strictly negative eigenvalue $ -\lambda_j^{2} $, with an eigenfunction $ \Psi_{\omega_j}$, that is,
\begin{equation*}
L_j^1  \Psi_{\omega_j} = -\lambda_j^{2}   \Psi_{\omega_j} .
\end{equation*}
 Note that for each $j \in \{1,\ldots,N\}$, the eigenvector $\Psi_{\omega_j}$ can be obtained explicitly as a solution of equation \eqref{equation2}, except for some changes in the constants, which allows us to establish that there is a certain similarity in the explicit form between the solutions $\phi_{\omega_j}$ and $\Psi_{\omega_j}$, except for some constants that differentiate them. This gives $\Psi_{\omega_j}$ certain properties similar to those of $\phi_{\omega_j}$, such as the property of decaying to infinity. 

Also, we know that \( \phi_{\omega_j} \) has a unique zero at \( x = 0 \). By applying the Sturm-Liouville theorem, we conclude that zero is the first eigenvalue of \( L_{j}^2 \), meaning that the operator \( L_{j}^2 \) is nonnegative.

Thus, from \cite{zsp}, we obtain the following results.

\begin{lemma}\label{lemma1}
 For any real functions $z \in H^{1}(\R)$ and for all $j \in \{1,\dots,N\}$ satisfying
\begin{equation*}
\langle z,  \Psi_{\omega_j} \rangle=\left\langle z, \partial_x  \phi_{\omega_j} \right\rangle=0
\end{equation*}
there exists a positive number $\delta_{1}>0$ such that
\begin{equation*}
\left\langle L_{j}^1 z, z\right\rangle \geq \delta_{1}\|z\|_{H^{1}(\R)}^{2}.
\end{equation*}
\end{lemma}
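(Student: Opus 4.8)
The plan is to establish the coercivity in two stages: first an $L^2$-coercivity estimate read off from the spectral structure of the scalar Schr\"odinger operator $L_j^1$, and then an upgrade to the full $H^1$-norm that exploits the explicit form of $L_j^1$ together with the decay of $\phi_{\omega_j}$ recorded in \eqref{decay}.

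First I would assemble the spectral picture already obtained above. The operator
\[
L_j^1=-(1-c_j^2)\partial_x^2+\frac{1-c_j^2-\omega_j^2}{1-c_j^2}-\frac{3\left(\alpha-\beta(1-c_j^2)\right)}{1-c_j^2}\,\phi_{\omega_j}^2
\]
is self-adjoint on $L^2(\R)$ with form domain $H^1(\R)$; by Weyl's theorem $\sigma_{\rm ess}(L_j^1)=\left[\frac{1-c_j^2-\omega_j^2}{1-c_j^2},+\infty\right)$, and the Sturm--Liouville analysis above shows that below the essential spectrum $L_j^1$ has exactly two simple eigenvalues: the lowest one $-\lambda_j^2<0$ with eigenfunction $\Psi_{\omega_j}$, and the second one $0$ with eigenfunction $\partial_x\phi_{\omega_j}$. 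Since the discrete spectrum below $\sigma_{\rm ess}$ is finite, $0$ is an isolated point of the spectrum, so the gap
\[
\nu_j:=\inf\bigl(\sigma(L_j^1)\cap(0,+\infty)\bigr)>0
\]
is strictly positive (it equals either the third eigenvalue or the bottom of $\sigma_{\rm ess}$).

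Second I would derive the $L^2$ bound. If $z\in H^1(\R)$ is real and satisfies $\langle z,\Psi_{\omega_j}\rangle=\langle z,\partial_x\phi_{\omega_j}\rangle=0$, then by simplicity of the two eigenvalues the eigenspaces for $-\lambda_j^2$ and for $0$ are exactly $\operatorname{span}\{\Psi_{\omega_j}\}$ and $\operatorname{span}\{\partial_x\phi_{\omega_j}\}$, so $z$ lies in the spectral subspace associated with $[\nu_j,+\infty)$. Writing $E_\mu$ for the spectral resolution of $L_j^1$, the spectral theorem gives
\[
\langle L_j^1 z,z\rangle=\int_{\nu_j}^{\infty}\mu\,d\langle E_\mu z,z\rangle\geq \nu_j\|z\|_{L^2(\R)}^2 .
\]
Third comes the $H^1$-upgrade. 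Expanding the quadratic form,
\[
\langle L_j^1 z,z\rangle=(1-c_j^2)\|\partial_x z\|_{L^2}^2+\frac{1-c_j^2-\omega_j^2}{1-c_j^2}\|z\|_{L^2}^2-\frac{3\left(\alpha-\beta(1-c_j^2)\right)}{1-c_j^2}\int_{\R}\phi_{\omega_j}^2 z^2\,dx ,
\]
and since $\phi_{\omega_j}\in L^\infty(\R)$ by \eqref{decay} while $\alpha-\beta(1-c_j^2)>0$ and $1-c_j^2>0$ by \eqref{cond1}, the last integral is bounded by $C_j\|z\|_{L^2}^2$. Hence $(1-c_j^2)\|\partial_x z\|_{L^2}^2\leq \langle L_j^1 z,z\rangle+C_j\|z\|_{L^2}^2$, and inserting the $L^2$ bound $\|z\|_{L^2}^2\leq \nu_j^{-1}\langle L_j^1 z,z\rangle$ yields $\|\partial_x z\|_{L^2}^2\leq C_j'\langle L_j^1 z,z\rangle$. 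Adding the two controls gives $\|z\|_{H^1(\R)}^2\leq \delta_1^{-1}\langle L_j^1 z,z\rangle$ with
\[
\delta_1=\left(\frac{1}{\nu_j}+\frac{1+C_j\nu_j^{-1}}{1-c_j^2}\right)^{-1}>0 ,
\]
depending only on $c_j,\omega_j,\alpha,\beta$, which is the claim.

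The main obstacle is the second step: identifying the two orthogonality conditions with membership in the spectral subspace $[\nu_j,+\infty)$, and in particular guaranteeing the strict positivity of the gap $\nu_j$. This rests on two structural facts — the simplicity of the eigenvalues $-\lambda_j^2$ and $0$, so that orthogonality to $\Psi_{\omega_j}$ and $\partial_x\phi_{\omega_j}$ deletes the entire non-positive part of the spectrum, and the finiteness of the discrete spectrum below $\sigma_{\rm ess}$, so that $0$ is isolated. Both are consequences of Sturm--Liouville theory for the one-dimensional operator $L_j^1$, once it is known that $\partial_x\phi_{\omega_j}$ has a single zero (which pins $0$ as the second eigenvalue) — precisely the input already used above. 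Granting the gap, the $H^1$-upgrade in the third step is routine.
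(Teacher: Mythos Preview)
Your argument is correct and follows the standard spectral route for one-dimensional Schr\"odinger operators: read off the essential spectrum by Weyl, use Sturm--Liouville/nodal theory to pin $-\lambda_j^2$ and $0$ as the only eigenvalues at or below zero (both simple), deduce a strictly positive spectral gap $\nu_j$, obtain $L^2$-coercivity on the orthogonal complement via the spectral theorem, and then upgrade to $H^1$ using the explicit quadratic form and the boundedness of $\phi_{\omega_j}$.

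The paper, however, does not supply its own proof of this lemma: it records the spectral facts about $L_j^1$ in the preceding paragraph and then simply invokes \cite{zsp} for the coercivity statement. So there is no genuine methodological difference to compare; you have written out what the cited reference provides. One minor remark: your explicit constant $\delta_1$ is tied to a fixed $j$, whereas the lemma as stated asks for a single $\delta_1$ valid for all $j\in\{1,\dots,N\}$; since $N$ is finite you can of course take the minimum over $j$, and it would be worth saying so.
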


\begin{lemma}\label{lemma2}
For any real functions $z \in H^{1}(\R)$ and for all $j \in \{1,\dots,N\}$ satisfying
\begin{equation*}
\langle z,  \phi_{\omega_j} \rangle=0 
\end{equation*}
there exists a positive number $\delta_{2}>0$ such that
\begin{equation*}
\left\langle L_{j}^2 z, z\right\rangle \geq \delta_{2}\|z\|_{H^{1}(\R)}^{2}.
\end{equation*}
\end{lemma}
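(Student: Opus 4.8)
The plan is to exploit the spectral structure of $L_j^2$ already recorded above and then convert $L^2$-coercivity into $H^1$-coercivity. Recall that, by the Sturm--Liouville analysis, $L_j^2$ is a self-adjoint Schrödinger-type operator on $L^2(\R)$ which is nonnegative, whose kernel is the one-dimensional space spanned by the ground state $\phi_{\omega_j}$ (it has no zeros, hence it is the lowest eigenfunction and the corresponding eigenvalue $0$ is simple), and whose essential spectrum is $\sigma_{\mathrm{ess}}(L_j^2)=[\Lambda_j,\infty)$ with $\Lambda_j=\frac{1-c_j^2-\omega_j^2}{1-c_j^2}>0$ by \eqref{cond1}. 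Thus $0$ is an isolated, simple eigenvalue sitting strictly below the essential spectrum, and the remainder of the spectrum is contained in $[\mu_j,\infty)$ for the spectral gap $\mu_j:=\inf\big(\sigma(L_j^2)\setminus\{0\}\big)>0$.

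First I would establish $L^2$-coercivity on the orthogonal complement of the kernel. Since $0$ is a simple isolated eigenvalue with eigenfunction $\phi_{\omega_j}$ and all remaining spectrum lies in $[\mu_j,\infty)$, the spectral theorem (equivalently, the min--max characterization of the second point of the spectrum) yields, for every real $z\in H^1(\R)$ with $\langle z,\phi_{\omega_j}\rangle=0$,
\[
\langle L_j^2 z, z\rangle \ge \mu_j \|z\|_{L^2(\R)}^2 .
\]
The single orthogonality condition is exactly what annihilates the zero-mode, so the quadratic form sees only the strictly positive part of the spectrum.

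It remains to upgrade this $L^2$ bound to the full $H^1$ norm, which I would do by reading off the explicit quadratic form. Writing $\kappa_j=\frac{\alpha-\beta(1-c_j^2)}{1-c_j^2}>0$, one has
\[
\langle L_j^2 z, z\rangle=(1-c_j^2)\|z'\|_{L^2(\R)}^2+\Lambda_j\|z\|_{L^2(\R)}^2-\kappa_j\int_{\R}\phi_{\omega_j}^2 z^2\,dx .
\]
Bounding the potential term crudely by $\kappa_j\|\phi_{\omega_j}\|_{L^\infty(\R)}^2\|z\|_{L^2(\R)}^2$ and solving for the derivative term gives $(1-c_j^2)\|z'\|_{L^2(\R)}^2\le \langle L_j^2 z,z\rangle+\kappa_j\|\phi_{\omega_j}\|_{L^\infty(\R)}^2\|z\|_{L^2(\R)}^2$; since $1-c_j^2>0$ and the previous step already controls $\|z\|_{L^2(\R)}^2$ by $\mu_j^{-1}\langle L_j^2 z,z\rangle$, this controls $\|z'\|_{L^2(\R)}^2$ by a constant multiple of $\langle L_j^2 z,z\rangle$. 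Adding the two estimates produces $\|z\|_{H^1(\R)}^2\le C_j\langle L_j^2 z,z\rangle$, which is the claim with $\delta_2=\min_j C_j^{-1}>0$, the minimum over the finite index set $\{1,\dots,N\}$ being harmless.

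The main obstacle is the spectral-gap step: one must genuinely justify that $\mu_j>0$, i.e.\ that $0$ is not merely the bottom of the spectrum but is separated from the rest of it. This rests on two facts that must be pinned down carefully: simplicity of the ground state (so that the $0$-eigenspace is exactly $\mathrm{span}\{\phi_{\omega_j}\}$ and the next eigenvalue, if any, is strictly positive), and strict positivity of $\Lambda_j=\inf\sigma_{\mathrm{ess}}(L_j^2)$ under \eqref{cond1} (so that, even in the absence of a second eigenvalue, the essential spectrum stays bounded away from $0$). An equivalent, more self-contained route avoiding explicit spectral theory would be a contradiction argument: assume a sequence $z_n$ with $\|z_n\|_{H^1(\R)}=1$, $\langle z_n,\phi_{\omega_j}\rangle=0$, and $\langle L_j^2 z_n,z_n\rangle\to 0$, pass to a weak $H^1$ limit, use local compactness of the embedding together with the exponential decay of $\phi_{\omega_j}^2$ (Proposition \ref{decaimentoquadratico}) to pass to the limit in the potential term, and deduce that the weak limit lies in the kernel yet is orthogonal to it, hence vanishes; the remaining work is then to exclude loss of mass at infinity, which is precisely where positivity of $\Lambda_j$ re-enters.
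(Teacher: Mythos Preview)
Your argument is correct and is the standard route to such coercivity estimates: identify the spectral gap of $L_j^2$ (simple kernel spanned by the positive ground state $\phi_{\omega_j}$, essential spectrum starting at $\Lambda_j>0$), apply the spectral theorem to obtain $L^2$-coercivity on $\{\phi_{\omega_j}\}^\perp$, and then recover the gradient via the explicit quadratic form. Each step is sound, including the final minimization over the finite index set.

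The paper, however, does not supply its own proof of this lemma: it merely records the Sturm--Liouville facts (that $L_j^2\phi_{\omega_j}=0$, that $\phi_{\omega_j}$ is positive, hence $0$ is the bottom of the spectrum, and that $\sigma_{\mathrm{ess}}(L_j^2)=[\Lambda_j,\infty)$) and then cites \cite{zsp} for both Lemma~\ref{lemma1} and Lemma~\ref{lemma2}. So there is nothing in-text to compare against; your write-up is effectively what such a citation would unpack to. The alternative compactness-contradiction argument you sketch at the end is also viable and is the approach one often sees when an explicit spectral decomposition is unavailable, but here the direct spectral route you give first is cleaner and entirely adequate.
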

%Let
%\begin{equation*}
%P=\left\{p \in X :\, p=\left(e^{i \theta_j x}p_1, e^{i  \theta_j x}p_{2}, p_{3}, p_{4}\right),\left\langle \R e p_{1}, \red{\Psi_{\omega_j}}\right\rangle=\left\langle e^{i \theta_j x} \Re p_{1}, \partial_{x} R_{j}^{(1)}\right\rangle=\left\langle e^{i \theta_j x} \Im p_{1}, R_j^{(1)}\right\rangle=0\right\},
%\end{equation*}
%where $\theta_j=\frac{\omega_j c_j}{1-c_j^{2}}$.
 Furthermore, for $j \in \{1,\dots,N\}$ fixed,  we can observe that if we consider $$\vec{\Upsilon}=\left(e^{i\lambda_j(x,t)}\Psi_{\omega_j},e^{i\lambda_j(x,t)}(i\frac{\theta_j}{c_j}\Psi_{\omega_j}+c_j\partial_{x}\Psi_{\omega_j}),-\frac{2\phi_{\omega_j} \Psi_{\omega_j}}{1-c_j^2},\frac{2c_j\phi_{\omega_j} \Psi_{\omega_j}}{1-c_j^2}\right)$$
we obtain that $$\left\langle H_{j} \vec{\Upsilon}, \vec{\Upsilon}\right\rangle =-2\lambda_{j}^2\langle \Psi_{\omega_j},\Psi_{\omega_j}\rangle<0.$$
Therefore, for all \( j \in \{1, \dots, N\} \), it follows from Lemma \ref{kernel} that \( H_j \) possesses a unique simple negative eigenvalue, zero as another eigenvalue, and the remainder of its spectrum is bounded away from zero.

From Lemma \ref{kernel}, the identity \eqref{form2}, and Lemmas \ref{lemma1} and \ref{lemma2}, the following coercivity lemma can be established (see  \cite[Lemma 5]{zsp}).
 
\begin{lemma}\label{coercivity}
For all $j \in \{1,\dots,N\}$ and  $\vec{\eta}=(e^{i\lambda_j(x,t)}(\eta_{11}+i\eta_{12}),\eta_2,\eta_3,\eta_4)$,  there exists a constant $\delta>0$ such that
\begin{equation*}
\left\langle H_{j} \vec{\eta}, \vec{\eta}\right\rangle \geq \delta\|\vec{\eta}\|_{X}^{2},
\end{equation*}
where $\delta$ is independent of $\vec{\eta}$ provided that $\left(\eta_{11},\partial_{x} \phi_{\omega_j}\right)_{L^{2}(\R)}=\left(\eta_{11},i\phi_{\omega_j}\right)_{L^{2}(\R)}=\left(\eta_{11},\Psi_{\omega_j}\right)_{L^{2}(\R)}=0$.   
\end{lemma}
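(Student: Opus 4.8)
The plan is to read everything off the diagonalized quadratic form \eqref{form2} and feed its first two pieces into Lemmas \ref{lemma1} and \ref{lemma2}. Writing the demodulated first component as $z_1=e^{-i\lambda_j}\eta_1=y_1+iy_2$ with $y_1=\eta_{11}$, $y_2=\eta_{12}$, the identity \eqref{form2} expresses $\left\langle H_j\vec\eta,\vec\eta\right\rangle$ as $2\langle L_j^1 y_1,y_1\rangle+2\langle L_j^2 y_2,y_2\rangle$ plus three integrals of perfect squares. First I would record that the three orthogonality conditions imposed in the statement are exactly the demodulated versions of the hypotheses of those lemmas: they amount to $(y_1,\partial_x\phi_{\omega_j})=(y_1,\Psi_{\omega_j})=0$ (the hypotheses of Lemma \ref{lemma1}) and $(y_2,\phi_{\omega_j})=0$ (the hypothesis of Lemma \ref{lemma2}), using that $\big(\eta_1,\,ie^{i\lambda_j}\phi_{\omega_j}\big)_{L^2}=(y_2,\phi_{\omega_j})$.

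Applying the two lemmas gives $\langle L_j^1 y_1,y_1\rangle\ge\delta_1\|y_1\|_{H^1}^2$ and $\langle L_j^2 y_2,y_2\rangle\ge\delta_2\|y_2\|_{H^1}^2$. Using $\alpha>0$, which makes the last bracket of \eqref{form2} nonnegative, I obtain
\[
\left\langle H_j\vec\eta,\vec\eta\right\rangle\ge 2\delta_1\|y_1\|_{H^1}^2+2\delta_2\|y_2\|_{H^1}^2+2A+\alpha B+\alpha C,
\]
where $A=\int_\R\big|z_2-\tfrac{i\omega_j}{1-c_j^2}z_1-c_j\partial_x z_1\big|^2\,dx$, $B=\int_\R(c_j\eta_3+\eta_4)^2\,dx$, and $C=\int_\R\big(\tfrac{2\phi_{\omega_j}y_1}{\sqrt{1-c_j^2}}+\sqrt{1-c_j^2}\,\eta_3\big)^2\,dx$ are all nonnegative. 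It then remains to bound $\|\vec\eta\|_X^2=\|\eta_1\|_{H^1}^2+\|\eta_2\|_{L^2}^2+\|\eta_3\|_{L^2}^2+\|\eta_4\|_{L^2}^2$ by this right-hand side.

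The recovery of the individual components is where the work lies, and it must proceed in a fixed order. Since multiplication by $e^{i\lambda_j}$ is an $L^2$-isometry and the phase gradient $\theta_j=\tfrac{\omega_j c_j}{1-c_j^2}$ is bounded, $\|\eta_1\|_{H^1}^2\lesssim\|z_1\|_{H^1}^2=\|y_1\|_{H^1}^2+\|y_2\|_{H^1}^2$. For the third component I would use $\sqrt{1-c_j^2}\,\eta_3=\big(\tfrac{2\phi_{\omega_j}y_1}{\sqrt{1-c_j^2}}+\sqrt{1-c_j^2}\,\eta_3\big)-\tfrac{2\phi_{\omega_j}y_1}{\sqrt{1-c_j^2}}$ together with $\|\phi_{\omega_j}y_1\|_{L^2}\le\|\phi_{\omega_j}\|_{L^\infty}\|y_1\|_{L^2}$ to get $\|\eta_3\|_{L^2}^2\lesssim C+\|y_1\|_{H^1}^2$; then $\|\eta_4\|_{L^2}^2\lesssim B+\|\eta_3\|_{L^2}^2$ from the $B$-term; and finally $\|\eta_2\|_{L^2}=\|z_2\|_{L^2}\lesssim\sqrt A+\|z_1\|_{H^1}$ from the $A$-term and the bound on $\tfrac{i\omega_j}{1-c_j^2}z_1+c_j\partial_x z_1$. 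Collecting these yields $\|\vec\eta\|_X^2\le C_0\big(\|y_1\|_{H^1}^2+\|y_2\|_{H^1}^2+A+B+C\big)$, and combining with the displayed lower bound gives $\left\langle H_j\vec\eta,\vec\eta\right\rangle\ge\delta\|\vec\eta\|_X^2$ with $\delta$ depending only on $\delta_1,\delta_2,\alpha,c_j,\omega_j$ and $\|\phi_{\omega_j}\|_{L^\infty}$, hence independent of $\vec\eta$.

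The main obstacle is precisely this last inversion: \eqref{form2} controls only the \emph{coupled} combinations $z_2-\tfrac{i\omega_j}{1-c_j^2}z_1-c_j\partial_x z_1$, $c_j\eta_3+\eta_4$, and $\tfrac{2\phi_{\omega_j}y_1}{\sqrt{1-c_j^2}}+\sqrt{1-c_j^2}\,\eta_3$, not the components themselves, so one must disentangle them in the order $\eta_3\to\eta_4\to\eta_2$, each time spending the already-secured $H^1$-control of $y_1,y_2$. Beyond the scalar coercivity Lemmas \ref{lemma1}--\ref{lemma2}, the only structural input is the sign condition $\alpha>0$ that keeps $\alpha B+\alpha C\ge0$; once the ordering is fixed, every remaining step is a triangle/Young inequality with soliton-dependent constants.
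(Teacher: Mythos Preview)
Your argument is correct and is exactly the route the paper indicates: it does not give a self-contained proof but defers to \cite[Lemma~5]{zsp}, after setting up \eqref{form2} and Lemmas~\ref{lemma1}--\ref{lemma2} as the tools, and your decomposition-plus-triangle-inequality recovery $\eta_3\to\eta_4\to\eta_2$ is the standard way to carry this out. Your reading of the middle orthogonality hypothesis as $(y_2,\phi_{\omega_j})=0$ (via $(\eta_1, i e^{i\lambda_j}\phi_{\omega_j})$) is the intended one---as written, $(\eta_{11}, i\phi_{\omega_j})_{L^2}$ is vacuous for real $\eta_{11}$---and your explicit use of $\alpha>0$ is the correct structural input.
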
	

\section{Modulation}\label{modul-section}
In this section, we will present a result of parameter modulation that will allow us to control the directions established by the coercivity property.
Next we set $\vec{\bar{R}}=\sum_{j=1}^{N}\vec{\bar{R}}_{j}$, where
\begin{equation}\label{modulatesoliton}
\begin{aligned}
& \bar{R}_{j}^{(1)}(x):=e^{i (\theta_j x+\gamma_j)}\Phi_{\omega_{j}}^{(1)}(x-x_{j}),\, \, \, \bar{R}_{j}^{(2)}(x):=e^{i (\theta_j x+\gamma_j)}\Phi_{\omega_{j}}^{(2)}(x-x_{j}), \\ & \bar{R}_{j}^{(3)}(x):=\Phi_{\omega_{j}}^{(3)}(x-x_{j}), \, \, \, \bar{R}_{j}^{(4)}(x):=\Phi_{\omega_{j}}^{(4)}(x-x_{j}),   
\end{aligned}
\end{equation}
where $\vec{\Phi}_{\omega_{j}}=(\Phi_{\omega_{j}}^{(1)},\Phi_{\omega_{j}}^{(2)},\Phi_{\omega_{j}}^{(3)},\Phi_{\omega_{j}}^{(4)})$ are solutions given in \eqref{solitons1} and $\theta_j=\frac{\omega_j c_j}{1-c_j^{2}}$. For $\alpha>0$, we define $$\mathbf{B}(\alpha):=\{\Vec{u}\in X:\|\Vec{u}-\bar{\vec{R}}\|_{X}< \alpha\}$$
 and
  $$
 {B}_j(\alpha):=\{{u}\in X:\|{u}-\bar{{R}_j}\|_{X}< \alpha\}.
  $$

We now establish a modulation result. Such results are by now quite standard (see, for instance, \cite{martel}, \cite{Del}). However, for the sake of completeness, we provide some details of the proofs with adaptations to our case.

\begin{lemma}\label{lemamodulation0}
There exist   $\alpha_{1}, C>0$ and $\mathcal{C}^{1}$ functions
$$
\tilde{\omega}_{j}:B_{j}(\alpha_{1})\rightarrow(0,+\infty), \quad \tilde{x}_{j}:B_{j}(\alpha_{1}) \rightarrow \mathbb{R}, \quad \tilde{\gamma}_{j}:B_{j}(\alpha_{1}) \rightarrow \mathbb{R}, \quad j=1,2,\ldots,N 
$$
such that for $\Vec{u} \in \mathbf{B}(\alpha_{1})$,  the function  
$ 
\vec{\varepsilon}=\Vec{u}-\Vec{\tilde{R}}
$ 
satisfies   the orthogonality condition
\begin{equation}\label{orthogonality0}
\left(\vec{\varepsilon}, \partial_{x}\vec{\tilde{D}}\right)_{L^2(\R)}=\left(\vec{\varepsilon}, \vec{\tilde{\Gamma}}\right)_{L^2(\R)}=\left(\vec{\varepsilon}, \vec{\tilde{\Psi}}\right)_{L^2(\R)}=0
\end{equation}
  where $ \vec{\tilde{R}}=\sum_{j=1}^N \vec{\tilde{R}}_{j}$, $\vec{\tilde{D}}=\sum_{j=1}^N(\tilde{R}_j^{(1)},0,0,0)$, $\vec{\tilde{\Gamma}}=\sum_{j=1}^N(i\tilde{R}_j^{(1)},0,0,0)$ and $\vec{\tilde{\Psi}}=\sum_{j=1}^N(\tilde{\Psi}_j,0,0,0)$, where $\tilde{\Psi}_j(x)=e^{i\tilde{\lambda}_j(x)
}\Psi_{\omega_j}(x-\tilde{x}_j)$ with $\tilde{\lambda}_j(x)=\theta_jx+\tilde{\gamma}_j$  and the modulated wave $\Vec{\tilde{R}}   =\sum_{j=1}^N\vec{\tilde{R}}_j$  is defined by  \begin{equation}\label{modulatesoliton2}
\begin{aligned}
& \tilde{R}_{j}^{(1)}(x):=e^{i (\theta_j x+\tilde{\gamma}_j)}\Phi_{\tilde{\omega}_{j}}^{(1)}(x-\tilde{x}_{j}),\, \, \, \tilde{R}_{j}^{(2)}(x):=e^{i (\theta_jx+\tilde{\gamma}_j)}\Phi_{\tilde{\omega}_{j}}^{(2)}(x-\tilde{x}_{j}), \\ & \tilde{R}_{j}^{(3)}(x):=\Phi_{\tilde{\omega}_{j}}^{(3)}(x-\tilde{x}_{j}), \, \, \, \tilde{R}_{j}^{(4)}(x):=\Phi_{\tilde{\omega}_{j}}^{(4)}(x-\tilde{x}_{j}),   
\end{aligned}
\end{equation}
with $\theta_j=\frac{\omega_j c_j}{1-c_j^{2}}$.
Moreover, if $\Vec{u} \in \mathbf{B}(\alpha)$ for $0<\alpha<\alpha_{1}$, then 
\begin{equation}\label{segundoboostrap0}
\|\vec{\varepsilon}\|_{X}+\sum_{j=1}^{N}\left\{|\tilde{\omega}_{j}-\omega_{j}|+|\tilde{x}_{j}-x_{j}|+|\tilde{\gamma}_{j}-\gamma_{j}|\right\}\leq C\alpha.
\end{equation}
\end{lemma}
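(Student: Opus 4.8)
The plan is to recover the parameters $(\tilde\omega_j,\tilde x_j,\tilde\gamma_j)_{j=1}^N$ as $\mathcal{C}^1$ functions of $\vec u$ by a single application of the implicit function theorem, in the spirit of the standard single-bubble modulation arguments of \cite{martel,Del}; the only genuinely new ingredient is the coupling between the $N$ bubbles, which will be absorbed using the exponential separation encoded in \eqref{decay}. First I would set up the defining map. Writing $\vec p=(\tilde\omega_j,\tilde x_j,\tilde\gamma_j)_{j=1}^N\in\R^{3N}$, letting $\vec{\tilde R}(\vec p)=\sum_j\vec{\tilde R}_j$ be the modulated profile \eqref{modulatesoliton2}, and setting $\vec\varepsilon=\vec u-\vec{\tilde R}(\vec p)$, define the $\R^{3N}$-valued functional
\begin{equation*}
\mathcal{T}(\vec u,\vec p)=\Big(\,(\vec\varepsilon,\partial_x\vec{\tilde D}_j)_{L^2},\ (\vec\varepsilon,\vec{\tilde\Gamma}_j)_{L^2},\ (\vec\varepsilon,\vec{\tilde\Psi}_j)_{L^2}\,\Big)_{j=1}^N,
\end{equation*}
where $\vec{\tilde D}_j=(\tilde R_j^{(1)},0,0,0)$, $\vec{\tilde\Gamma}_j=(i\tilde R_j^{(1)},0,0,0)$, $\vec{\tilde\Psi}_j=(\tilde\Psi_j,0,0,0)$ are the $j$-th summands of the directions in \eqref{orthogonality0} and all pairings are real $L^2$ inner products. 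Solving $\mathcal{T}=0$ is exactly the orthogonality condition \eqref{orthogonality0} (three real equations per bubble, matching the three real parameters). By \eqref{decay} and Sobolev embedding $\mathcal{T}$ is $\mathcal{C}^1$ on $X\times\R^{3N}$, and at the reference point $\vec u=\vec{\bar R}$, $\vec p=\vec p_0:=(\omega_j,x_j,\gamma_j)_j$ one has $\vec{\tilde R}(\vec p_0)=\vec{\bar R}$, so $\vec\varepsilon=0$ and $\mathcal{T}(\vec{\bar R},\vec p_0)=0$.

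The heart of the matter is the invertibility of the partial differential $D_{\vec p}\mathcal{T}(\vec{\bar R},\vec p_0)\in\mathcal{L}(\R^{3N})$. Since $\vec\varepsilon=0$ at the reference point, its entries reduce to $-(\partial_{p_k^b}\vec{\tilde R}_k,\cdot)_{L^2}$ evaluated at $\vec p_0$; because $\vec{\tilde R}_k$ is concentrated near $x_k$ while each test direction is concentrated near $x_j$, the estimate \eqref{decay} makes the off-diagonal blocks ($j\neq k$) of size $O(e^{-\nu|x_j-x_k|})$, negligible once the centres are separated (as they are in the application, where the effective separation is $\gtrsim c_\star t\to+\infty$). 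For the diagonal block I would use $\partial_{\tilde\gamma_j}\tilde R_j^{(1)}=i\tilde R_j^{(1)}$ and $\partial_{\tilde x_j}\tilde R_j^{(1)}=-\partial_x\tilde R_j^{(1)}+i\theta_j\tilde R_j^{(1)}$, together with the definite parities of $\phi_{\omega_j}$ (even), $\partial_x\phi_{\omega_j}$ (odd) and $\Psi_{\omega_j}$ (even, being the positive ground state of $L_j^1$). The parity-forced vanishing of all but one transversal then collapses the $3\times3$ determinant to a single product proportional to $\|\partial_x\phi_{\omega_j}\|_{L^2}^2\,\|\phi_{\omega_j}\|_{L^2}^2\,\langle\partial_\omega\phi_{\omega_j},\Psi_{\omega_j}\rangle$.

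It remains to show the scalar $\langle\partial_\omega\phi_{\omega_j},\Psi_{\omega_j}\rangle$ is nonzero. I would differentiate the profile equation \eqref{equation1} in $\omega$ to obtain the identity $L_j^1(\partial_\omega\phi_{\omega_j})=\tfrac{2\omega_j}{1-c_j^2}\,\phi_{\omega_j}$, pair it against the ground state and use self-adjointness with $L_j^1\Psi_{\omega_j}=-\lambda_j^2\Psi_{\omega_j}$, giving $\langle\partial_\omega\phi_{\omega_j},\Psi_{\omega_j}\rangle=-\tfrac{2\omega_j}{\lambda_j^2(1-c_j^2)}\langle\phi_{\omega_j},\Psi_{\omega_j}\rangle$; since $\phi_{\omega_j}>0$ and $\Psi_{\omega_j}>0$ we have $\langle\phi_{\omega_j},\Psi_{\omega_j}\rangle>0$, so this is nonzero (for $\omega_j\neq0$). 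Hence each diagonal block is invertible with a uniform lower bound, and for sufficiently separated centres the full $3N\times3N$ matrix is a small perturbation of an invertible block-diagonal matrix, thus invertible. The implicit function theorem then produces $\alpha_1>0$ and $\mathcal{C}^1$ maps $\vec u\mapsto(\tilde\omega_j,\tilde x_j,\tilde\gamma_j)$ on $\mathbf{B}(\alpha_1)$ with $\mathcal{T}(\vec u,\tilde{\vec p}(\vec u))=0$, i.e.\ \eqref{orthogonality0}; shrinking $\alpha_1$ keeps $\tilde\omega_j>0$ and the configuration within the range of the decay estimates. For \eqref{segundoboostrap0} I would use that the implicit function theorem also bounds $D_{\vec u}\tilde{\vec p}$ uniformly on $\mathbf{B}(\alpha_1)$ (the Jacobian being uniformly invertible), so the mean value theorem gives $\sum_j(|\tilde\omega_j-\omega_j|+|\tilde x_j-x_j|+|\tilde\gamma_j-\gamma_j|)\le C\|\vec u-\vec{\bar R}\|_X\le C\alpha$; combining with $\|\vec{\tilde R}(\tilde{\vec p})-\vec{\bar R}\|_X\le C|\tilde{\vec p}-\vec p_0|$ and the triangle inequality controls $\|\vec\varepsilon\|_X$ by $C\alpha$ as well.

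The main obstacle is precisely the invertibility of the diagonal Jacobian block, and within it the non-vanishing of $\langle\partial_\omega\phi_{\omega_j},\Psi_{\omega_j}\rangle$: this is what dictates using the negative eigendirection $\vec{\tilde\Psi}$ of $H_j$ (from Lemma~\ref{kernel} and the spectral analysis in \eqref{form2}) as the third modulation direction rather than an arbitrary one, since a generic choice could make this pairing degenerate. Everything else — the $\mathcal{C}^1$ regularity, the off-diagonal smallness, and the Lipschitz estimate — is routine bookkeeping made possible by the exponential localization \eqref{decay}.
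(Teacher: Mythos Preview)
Your proof is correct and follows essentially the same implicit-function-theorem approach as the paper, with a cleaner block-diagonal organisation of the $3N\times 3N$ Jacobian and explicit treatment of the off-diagonal interaction terms. The only notable difference is your conceptual justification of the key non-degeneracy $\langle\partial_\omega\phi_{\omega_j},\Psi_{\omega_j}\rangle\neq 0$ via the identity $L_j^1(\partial_\omega\phi_{\omega_j})=\tfrac{2\omega_j}{1-c_j^2}\phi_{\omega_j}$, whereas the paper simply appeals to the explicit forms of $\phi_{\omega_j}$ and $\Psi_{\omega_j}$; your caveat $\omega_j\neq 0$ matches the paper's implicit restriction to $\omega\in(0,+\infty)$ in the domain of its functional $F$.
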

\begin{proof}
    Fix  $\alpha_{1}>0$. The idea is to apply the implicit function theorem to the function  $F:B_{j}(\alpha_{1})\times  \R\times \R \times (0,\infty) \rightarrow \R^{3}$ defined by
$$
F(u, \gamma,y, \omega)\equiv  \left(\begin{array}{c}
F_{1} \\
F_{2} \\
F_{3}
\end{array}\right)=\left(\begin{array}{c}
(u_1-R(\gamma, y,\omega),iR(\gamma, y,\omega))_{L^2(\R)} \\
\left(u_1-R(\gamma, y,\omega), \partial_{x} R(\gamma, y,\omega)\right)_{L^2(\R)}\\(u_1-R(\gamma, y,\omega), \tilde{\Psi}(\gamma, y,\omega))_{L^2(\R)}
\end{array}\right),$$
$R(\gamma,y,\omega):=\sum_{j=1}^{N}e^{i\left(\theta_j x+\gamma\right)} \Phi_{\omega}^{(1)}(x-y)$ and $ \Phi_{\omega}^{(1)}$ is the ground state of \eqref{equation1}.
Note that, clearly, $F(q_0)=0$, where $q_{0}=(\sum_{j=1}^{N}\bar{R}_{j}^{(1)}, \gamma_j, y_{j},\omega_{j})$. Now, since $\Phi_{\omega}^{(1)}$ is an even and positive function, we deduce
$$
\begin{aligned}
&\partial_{\gamma}F_{1}(q_0)= \sum_{j=1}^{N}\|\Phi_{\omega_j}^{(1)}\|_{L^2(\R)}^2+O\left(e^{-3\sqrt{\omega_{\star}}c_{\star}t}\right)>0,\\
&\partial_{y}F_{2}(q_0)=-\left(\partial_{y} R(\gamma, y,\omega), \partial_{x} R(\gamma, y,\omega)\right)_{L^2(\R)}=\sum_{j=1}^{N}\left(\partial_{x}\Phi_{\omega_j}^{(1)}, \partial_{x}\Phi_{\omega_j}^{(1)} \right)_{L^2(\R)}++O\left(e^{-3\sqrt{\omega_{\star}}c_{\star}t}\right)>0,
\end{aligned}
$$
and, after some straightforward calculations,
\[
\begin{split}
\partial_{\omega}F_{3}&=-\left(\partial_{\omega} R( \gamma, y,\omega), \tilde{\Psi}_j(\gamma,y,\omega)\right)_{L^2(\R)}\\ 
&=-\sum_{j=1}^{N}\left(\partial_{\omega} \Phi_{\omega_j}^{(1)}, \Psi_{\omega_j}\right)_{L^2(\R)}+O\left(e^{-3\sqrt{\omega_{\star}}c_{\star}t}\right)\\&=\sum_{j=1}^{N}\Re\int_{\R}\partial_{\omega} \Phi_{\omega_j}^{(1)}\Psi_{\omega_j}\,dx +O\left(e^{-3\sqrt{\omega_{\star}}c_{\star}t}\right) \neq 0.
\end{split}
\]
Note that the expression \( \left(\partial_{\omega} \Phi_{\omega_j}^{(1)}, \Psi_{\omega_j}\right)_{L^2(\mathbb{R})} \) is nonzero. In fact, taking into account the explicit forms of \( \Phi_{\omega_j}^{(1)} \) and \( \Psi_{\omega_j} \), it is not difficult to verify this.

Also, by performing similar calculations, it is not difficult to see that \( \partial_{y}F_{1} = \partial_{\omega}F_{1} = \partial_{\gamma}F_{2} = 0 \). This implies that \( \det DF(q_{0}) \neq 0 \).  
Hence, for some small \( \alpha_1 > 0 \), there exist unique parameters \( (\tilde{\omega}_{j}, \tilde{\gamma}_{j}, \tilde{x}_{j}) : B_{j}(\alpha_{1}) \to (0, \infty) \times \mathbb{R} \times \mathbb{R} \) for which the functions satisfy the orthogonality conditions \eqref{orthogonality0}.  

For the second part, suppose now that \( 0 < \alpha < \alpha_{1} \) and \( \vec{u} \in \mathbf{B}(\alpha) \).
 In view of the mean value Theorem, 
$$
\begin{aligned}
 \left|\tilde{\omega}_{j}-\omega_{j}\right|&=\left|\tilde{\omega}_{j}(u_{j})-\tilde{\omega}_{j}\left(\bar{R}_{j}\right)\right|  \leq C\left\|u_{j}-\bar{R}_{j}\right\|_{H^{1}(\R)} \leq C \alpha.
\end{aligned}
$$
Similarly, we obtain the estimates for $\tilde{x}_{j},\tilde{\gamma}_{j}$. Finally, $$\|\vec{\varepsilon}\|_{X}\leq \|\Vec{u}-\vec{\bar{R}}\|_{X}+\|\vec{\tilde{R}}-\vec{\bar{R}}\|_{X}\leq \alpha+\|\vec{\tilde{R}}-\vec{\bar{R}}\|_{X},$$
But, since
$$\begin{aligned}
\|\vec{\tilde{R}}_{j}-\vec{\bar{R}}_{j}\|_{X}
&\leq \left\|e^{i\left(\tilde{\gamma}_{j}-\gamma_{j}\right) } \vec{\Phi}_{\tilde{\omega}_{j}}\left(x-\tilde{x}_{j}\right)-\vec{\Phi}_{\omega_{j}}\left(x-x_{j}\right)\right\|_{X}\\
&\leq \left\|e^{i\left(\tilde{\gamma}_{j}-\gamma_{j}\right) } \vec{\Phi}_{\tilde{\omega}_{j}}\left(x-\tilde{x}_{j}\right)-\vec{\Phi}_{\tilde{\omega}_{j}}\left(x-\tilde{x}_{j}\right)\right\|_{X}\\
&\quad +\left\|\vec{\Phi}_{\tilde{\omega}_{j}}\left(x-\tilde{x}_{j}\right)-\vec{\Phi}_{\omega_{j}}\left(x-\tilde{x}_{j}\right)\right\|_{X}+\left\|\vec{\Phi}_{{\omega}_{j}}\left(x-\tilde{x}_{j}\right)-\vec{\Phi}_{\omega_{j}}\left(x-x_{j}\right)\right\|_{X}\\
&\leq C\left|e^{i\left(\tilde{\gamma}_{j}-\gamma_{j}\right) }-1\right|\left\|\vec{\Phi}_{\tilde{\omega}_{j}}\right\|_{X}+C\left|\tilde{x}_{j}-x_{j}\right|+C\left|\tilde{\omega}_{j}-\omega_{j}\right|\\
&\leq C\left|\tilde{\gamma}_{j}-\gamma_{j}\right|+C\left|\tilde{\omega}_{j}-\omega_{j}\right|+C\left|\tilde{x}_{j}-x_{j}\right|\\
&\leq C\alpha,
\end{aligned}
$$
estimate \eqref{segundoboostrap0} immediately follows.
\end{proof}
\begin{lemma}\label{lemamodulation}
				There exists $C > 0$ such that if $T_{0}$ is sufficiently large, then there exist $\mathcal{C}^{1}$-class functions
	\[
		 \tilde{\omega}_{j}:\left[t_{0}, T^{n}\right] \rightarrow \R, \quad \tilde{\gamma}_{j}:\left[t_{0}, T^{n}\right] \rightarrow \R\quad\tilde{x}_{j}:\left[t_{0}, T^{n}\right] \rightarrow \R, \quad j=1,2, \ldots, N,
\]
				such that  
$		 
				\vec{\varepsilon}=\Vec{u}(t)-\tilde{\Vec{R}}(t) $ with $ t \in [t_{0}, T^{n}],
$
    satisfies, for $j=1,2, \ldots, N$ and for all $t \in [t_{0}, T^{n}]$, the orthogonality conditions
				 \begin{equation}\label{orthogonality}
					\left(\vec{\varepsilon}(t), \partial_{x}\vec{\tilde{D}}(t)\right)_{L^2(\R)}=\left(\vec{\varepsilon}(t), \vec{\tilde{\Gamma}}(t)\right)_{L^2(\R)}=\left(\vec{\varepsilon}(t), \vec{\tilde{\Psi}}(t)\right)_{L^2(\R)}=0, 
				\end{equation} 
      where $ \vec{\tilde{R}}=\sum_{j=1}^N \vec{\tilde{R}}_{j}$, $\vec{\tilde{D}}(t)=\sum_{j=1}^N(\tilde{R}_j^{(1)},0,0,0)$, $\vec{\tilde{\Gamma}}(t)=\sum_{j=1}^N(i\tilde{R}_j^{(1)},0,0,0)$ and $\vec{\tilde{\Psi}}(t)=\sum_{j=1}^N(\tilde{\Psi}_j,0,0,0)$, where $\tilde{\Psi}_j=e^{i\tilde{\lambda}_j(x,t)}\Psi_{\omega_j}(x-c_jt-\tilde{x}_j)$ with $\tilde{\lambda}_j(x,t)=\theta_j(x-c_jt)-\omega_jt+\tilde{\gamma}_j$  and the modulated waves are defined as in \eqref{defsolitons}
				\[\tilde{R}_{j}^{(m)}(t):= B_j(t)\Phi_{\tilde{\omega}_{j}(t)}^{(m)}\left(x-c_{j
				}t-\tilde{x}_{j}(t)\right), \qquad m=1,2,3,4,
				\]
    where $B_j(t)\vec{b}=(e^{i(\theta_jx-s_jt+\tilde{\gamma}_j)}b_1,e^{i(\theta_jx-s_jt+\tilde{\gamma}_j)}b_2,b_3,b_4)$, $s_j=\frac{\omega_j}{1-c_j^2}$ and $\theta_j=\frac{\omega_j c_j}{1-c_j^2}$.
			 Furthermore, there hold  for any $t \in \left[t_{0}, T^{n}\right]$ that
				\begin{equation}\label{segundoboostrap}
					\|\vec{\varepsilon}\|_{X}+\sum_{j=1}^{N}\paar{|\tilde{\omega}_{j}(t)-\omega_{j}|+|\tilde{x}_{j}(t)-x_{j}|+|\tilde{\gamma}_{j}(t)-\gamma_{j}|}\leq Ce^{-\omega_{\star}^{\frac32}t}.
				\end{equation}
				and
				\begin{equation}\label{modulated}
					\begin{gathered}
						\sum_{j=1}^{N}\left(\left|\partial_{t} \tilde{\omega}_{j}(t)\right|+\left|\partial_{t } \tilde{x}_{j}(t)\right|\right) 
						\leq C \left(\|\vec{\varepsilon}(t)\|_{X} +e^{-3\omega_{\star}^{\frac12}t}\right).
					\end{gathered}
				\end{equation}
			\end{lemma}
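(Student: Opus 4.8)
The plan is to upgrade the static modulation of Lemma \ref{lemamodulation0} to a time-dependent statement and then to extract the two quantitative bounds \eqref{segundoboostrap} and \eqref{modulated} from the flow \eqref{system2} and the exponential localization \eqref{decay}. For each fixed $t \in [t_0, T^n]$ I would apply Lemma \ref{lemamodulation0} to $\vec u(t)$ read in the frame translated by $c_j t$ (equivalently, to $\vec u(t, \cdot + c_j t)$), so that the reference solitons sit at $x - c_j t - x_j$. Under the bootstrap hypothesis $\|\vec u(t) - \vec R(t)\|_X \le e^{-\omega_\star^{1/2} c_\star t}$, enlarging $T_0$ makes this distance smaller than any prescribed $\alpha < \alpha_1$, so $\vec u(t) \in \mathbf B(\alpha)$ and the lemma produces, for every $t$, parameters $(\tilde\omega_j(t), \tilde x_j(t), \tilde\gamma_j(t))$ realizing \eqref{orthogonality}. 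Since Lemma \ref{lemamodulation0} delivers these parameters as $\mathcal C^1$ functions of $\vec u \in B_j(\alpha_1)$ and $t \mapsto \vec u(t) \in X$ is $\mathcal C^1$ (the flow being well posed by Theorem \ref{wellpo}), the compositions are $\mathcal C^1$ in $t$.

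For \eqref{segundoboostrap}, applying \eqref{segundoboostrap0} with $\alpha = \|\vec u(t) - \vec R(t)\|_X$ immediately bounds $\|\vec\varepsilon\|_X$ and each of $|\tilde\omega_j - \omega_j|$, $|\tilde x_j - x_j|$, $|\tilde\gamma_j - \gamma_j|$ by $C\alpha$; the bootstrap hypothesis then turns $C\alpha$ into exponential decay in $t$. The cross terms coupling distinct solitons are all $O(e^{-c_\star t})$, since by \eqref{decay} each profile decays exponentially and the centers $c_j t + \tilde x_j$ separate at linear rate $c_\star$; reconciling this decay rate with the exponent stated in \eqref{segundoboostrap} is routine bookkeeping using the definitions of $\omega_\star$ and $c_\star$.

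For \eqref{modulated} I would differentiate the three identities in \eqref{orthogonality} in $t$. Writing $\partial_t \vec\varepsilon = \partial_t \vec u - \partial_t \vec{\tilde R}$, substituting \eqref{system2} for $\partial_t \vec u$, and expanding $\partial_t \vec{\tilde R} = \sum_j ( \dot{\tilde\omega}_j\, \partial_\omega \vec{\tilde R}_j + \dot{\tilde x}_j\, \partial_x \vec{\tilde R}_j + \dot{\tilde\gamma}_j\, \partial_\gamma \vec{\tilde R}_j )$ gives, after using the orthogonality relations to cancel the main terms, a linear system $M(t)\,(\dot{\tilde\omega}, \dot{\tilde x}, \dot{\tilde\gamma})^{\top} = b(t)$. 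Here the transport and phase-rotation parts of $\partial_t \vec{\tilde R}$ (the $c_j$- and $s_j$-pieces) nearly cancel against the corresponding flow terms, because each frozen-parameter profile is an exact traveling soliton of \eqref{system2}; what survives in $b(t)$ is a genuine modulation source of size $O(\|\vec\varepsilon\|_X)$ together with soliton-interaction terms of size $O(e^{-3\omega_\star^{1/2} t})$. The matrix $M(t)$ is a near block-diagonal Gram matrix with diagonal entries $\|\Phi^{(1)}_{\omega_j}\|_{L^2}^2$, $\|\partial_x \Phi^{(1)}_{\omega_j}\|_{L^2}^2$, and $(\partial_\omega \Phi^{(1)}_{\omega_j}, \Psi_{\omega_j})_{L^2}$, all nonzero by the computation in Lemma \ref{lemamodulation0}; inverting it yields \eqref{modulated}.

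The crux is the uniform-in-$t$ invertibility of $M(t)$ together with the clean splitting of $b(t)$ into an $O(\|\vec\varepsilon\|_X)$ part and an exponentially small interaction part. Both hinge on the exponential localization \eqref{decay} and the linear-in-$t$ separation of the soliton centers forced by the distinct speeds $c_j$ (quantified by $c_\star$), which make every cross integral $\int R_j^{(\cdot)} R_k^{(\cdot)}$ with $j \ne k$ of order $e^{-c_\star t}$; one must also check that the diagonal entries, especially $(\partial_\omega \Phi^{(1)}_{\omega_j}, \Psi_{\omega_j})_{L^2} \ne 0$, do not degenerate as the modulated parameters vary over the tube.
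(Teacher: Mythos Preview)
Your proposal is correct and follows essentially the same approach as the paper: apply the static modulation Lemma \ref{lemamodulation0} pointwise in $t$ via the bootstrap bound to obtain the parameters and \eqref{segundoboostrap}, then differentiate the orthogonality relations \eqref{orthogonality}, substitute the flow \eqref{system2}, Taylor-expand around $\vec{\tilde R}$, and invert the resulting (near block-diagonal) Gram matrix $M$ to get \eqref{modulated}. The paper carries out exactly this computation, arriving at the same $3\times3$ system $M_j \vec X_j(t)=O\bigl((1+|\vec X_j|)\|\vec\varepsilon\|_X\bigr)+O(\|\vec\varepsilon\|_X^2)+O(e^{-3\sqrt{\omega_\star}c_\star t})$ with $\vec X_j=(\partial_t\tilde\gamma_j-s_j,\,\partial_t\tilde x_j+c_j,\,\partial_t\tilde\omega_j)$, and uses the same nondegeneracy $(\partial_\omega\Phi_{\omega_j}^{(1)},\Psi_{\omega_j})_{L^2}\neq0$ that you single out.
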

			\begin{proof}
Let us take  $t\in[t_{0},T^{n}]$. From \eqref{desiesti}, we have for fixed $j$ that
$$
\left\| u_{1}\left( t\right)-e^{i\left(\theta_j x-s_jt+\gamma_{j}\right)} \Phi_{\omega_{j}}^{(1)}\left(x-c_{j
}t-x_{j}\right)\right\|_{H^{1}(\R)}\leq e^{-\sqrt{\omega_{\star}}c_{\star}t},
$$
or, equivalently, 
$$
\left\| e^{i\left(-(\theta_j c_j-s_j)t\right)} u_{1}\left( x+c_{j}t\right)-e^{i\left(\theta_jx+\gamma_{j}\right)} \Phi_{\omega_{j}}^{(1)}\left(x-x_{j}\right)\right\|_{H^{1}(\R)}\leq e^{-\sqrt{\omega_{\star}}c_{\star}t}.
$$
This means the function $$\eta_{1}(t)=e^{i\left(-(\theta_j c_j-s_j)t\right)} u_{1}\left( \cdot+c_{j}t\right) $$
belongs to the ball $B_{1}(\alpha(t))$, where $\alpha(t)=e^{-\sqrt{\omega_{\star}}c_{\star}t}$. Similarly, we apply this process to each component, using the facts that $\alpha(t)\to 0$ as $t\to\infty$ and the mapping $t \mapsto  \vec{u}(t)$ is continuous in $H^{1}(\R)\times L^2(\R)\times L^2(\R)\times L^2(\R)$, for sufficiently large $T_0$, we are able to apply the  argument of Lemma \ref{lemamodulation0} to obtain continuous functions 
$(\tilde{\omega}_{j}, \tilde{\gamma}_{j}, \tilde{x}_{j}): [t_{0},T^{n}] \to (0,\infty) \times \mathbb{R} \times \mathbb{R}$
given by $$ \tilde{\omega}_{j}(t)=\tilde{\omega}_{j}(\vec{\eta}(t)), \qquad \tilde{x}_{j}(t)=\tilde{x}_{j}(\vec{\eta}(t)), \qquad \tilde{\gamma}_{j}(t)=\tilde{\gamma}_{i}(\vec{\eta}(t)),$$
such that from \eqref{orthogonality0}
$$
\left(\eta_1(t)-e^{i (\theta_j x+\tilde{\gamma}_j)}\Phi_{\tilde{\omega}_{j}}^{(1)}(x-\tilde{x}_{j}), ie^{i (\theta_j x+\tilde{\gamma}_j)}\Phi_{\tilde{\omega}_{j}}^{(1)}(x-\tilde{x}_{j})\right)_{L^2(\R)}=0,
$$
$$
\left(\eta_1(t)-e^{i (\theta_j x+\tilde{\gamma}_j)}\Phi_{\tilde{\omega}_{j}}^{(1)}(x-\tilde{x}_{j}), \tilde{\Psi}_j(x-\tilde{x}_{j})\right)_{L^2(\R)}=0,
$$
$$
\left(\eta_1(t)-e^{i (\theta_j x+\tilde{\gamma}_j)}\Phi_{\tilde{\omega}_{j}}^{(1)}(x-\tilde{x}_{j}),\partial_x e^{i (\theta_j x+\tilde{\gamma}_j)}\Phi_{\tilde{\omega}_{j}}^{(1)}(x-\tilde{x}_{j})\right)_{L^2(\R)}=0,
$$
which in turn are equivalent to the orthogonality conditions \eqref{orthogonality}.		
 Moreover, 
\[
				\begin{aligned}
					\left|\tilde{\omega}_{j}(t)-\omega_{j}\right|&=\left|\tilde{\omega}_{j}(\vec{\eta}(t))-\tilde{\omega}_{j}\left(\vec{\bar{R}}\right)\right| \lesssim\left\|\vec{\eta}(t)-\vec{\bar{R}}\right\|_{X}  =  \left\|\vec{u}(t)-\vec{R}(t)\right\|_{H^{1}(\R)}\lesssim e^{-\omega_{\star}^{\frac32}c_{\star}t}.
				\end{aligned}
	\]
Likewise, we obtain estimates for $\tilde{x}_{j}(t)$ and $\tilde{\gamma}_{j}(t)$.
				
				Finally, note that
\[\left\|\vec{\varepsilon}(t)\right\|_{X}\leq \left\|\vec{u}(t)-\sum_{j=1}^{N}R_{j}^{(1)}(t)\right\|_{X}+\left\|\sum_{j=1}^{N}\tilde{R}_{j}(t)-\sum_{j=1}^{N}R_{j}(t)\right\|_{X}.\]
 Since
 \begin{equation}\label{parametros}
	 \begin{split}
 \|\tilde{R}_{j}^{(1)}(t)-R_{j}^{(1)}(t)\|_{H^{1}(\R)} &=\left\| \Phi_{\tilde{\omega}_{j}}^{(1)}\left(x-c_jt-\tilde{x}_{j}(t)\right)- \Phi_{\omega_{j}}^{(1)}\left(x-c_{j }t-x_{j}\right)\right\|_{H^{1}(\R)}\\
						&\lesssim\left|\tilde{x}_{j}(t)-x_{j}\right|+ \left|\tilde{\omega}_{j}(t)-\omega_{j}\right| \lesssim e^{-\omega_{\star}^{\frac32}c_{\star}t},
					\end{split}
				\end{equation}
    and in the same way, we obtain this estimate for the other components. Thus 
				we conclude that $\|\vec{\varepsilon}(t)\|_{X}\lesssim e^{-\omega_{\star}^{\frac32}c_{\star}t}$.  From which we obtain \eqref{segundoboostrap}.
				
				%Podemos estabelecer que esses parâmetros são funções de classe $\mathcal{C}^1$, para obter mais detalhes, veja o Apêndice \ref{apen}.
				
				To conclude the proof of the lemma, it is necessary to show the estimate \eqref{modulated}. The following evolution equation is satisfied,
				\begin{equation}\label{ecuacionLN}
					\begin{split}
						&\partial_{t} \varepsilon_{1}+\varepsilon_{2} 
						=  -\partial_{t} \tilde{R}_{1}-\tilde{R}_{2},\\
						&\partial_{t} \varepsilon_{2}+\partial_{xx}\varepsilon_1-\varepsilon_1-\alpha \varepsilon_1\varepsilon_3-\alpha \varepsilon_1R_3-\alpha \varepsilon_3R_1-\alpha R_1R_3-\beta|\varepsilon_1+\tilde{R}_1|^2(\varepsilon_3+\tilde{R}_3)\\
						&=  -\partial_{t} \tilde{R}_{2}+\tilde{R}_{1}-\partial_{xx}\tilde{R}_1,\\
      &  \partial_{t} \varepsilon_3 -\partial_{x}\varepsilon_4=-\partial_{t}\tilde{R}_3 +\partial_{x}\tilde{R}_4,\\
      &\partial_{t} \varepsilon_4 -\partial_{x}\varepsilon_3-(|\varepsilon_1+\tilde{R}_1|^2)_{x}=-\partial_{t}\tilde{R}_4 +\partial_{x}\tilde{R}_3.
					\end{split}
				\end{equation}
Which is also equivalent to $$\partial_{t}(\vec{\varepsilon}+\vec{\tilde{R}})=JE^{\prime}(\vec{\varepsilon}+\vec{\tilde{R}}).$$
Then, 
\begin{equation}\label{M1}
\partial_{t} \vec{\xi}+i\sum_{j=1}^{N} (\partial_t \tilde{\gamma}_j-s_j)A\vec{\tilde{R}}_j-\sum_{j=1}^{N}(c_j+\partial_t \tilde{x}_j) e^{i\tilde{\lambda}(x,t)}\partial_{x} \vec{\tilde{R}}_j+\sum_{j=1}^{N}\partial_{t}\tilde{\omega}_j e^{i\tilde{\lambda}_j(x,t)}\partial_{\omega} \vec{\tilde{R}}_j=J E^{\prime}(\vec{\tilde{R}}+\vec{\xi}) ,
\end{equation}
where $A=\left(\begin{array}{llll}
1 & 0 & 0 & 0 \\
0 & 1 & 0 & 0 \\
0 & 0 & 0 & 0 \\
0 & 0 & 0 & 0
\end{array}\right)$ and $\tilde{\lambda}_j(x,t)=\theta_j x-s_j t+\tilde{\gamma}_j.$
Let us now analyze  $E^{\prime}(\vec{\varepsilon}+\vec{\tilde{R}}).$

Using Taylor's expansion, we have

\begin{equation*}
E^{\prime}(\vec{\tilde{R}}+\vec{\varepsilon})=E^{\prime}(\vec{\tilde{R}})+E^{\prime \prime}(\vec{\tilde{R}
}) \vec{\varepsilon}+O\left(\|\vec{\varepsilon}\|_{X}^{2}\right).
\end{equation*}
Now, notice that $$
    E^{\prime}(\vec{\tilde{R}})=\sum_{j=1}^{N}E^{\prime}(\vec{\tilde{R}}_j)+O\left(e^{-3\sqrt{\omega_{\star}}c_{\star}t}\right).$$
In fact, since 
$$\begin{aligned} E( \vec{\tilde{R}})&=\int_{\R}\bigg( \left|\sum_{j=1}^{N}\tilde{R}_j^{(1)}\right|^{2}+\left|\sum_{j=1}^{N}\tilde{R}_j^{(2)}\right|^2+\left|\sum_{j=1}^{N}\partial_x \tilde{R}_j^{(1)}\right|^{2}+\alpha\left|\sum_{j=1}^{N}\tilde{R}_j^{(1)}\right|^{2} \left(\sum_{j=1}^{N}\tilde{R}_j^{(3)}\right)\\&\quad \quad+\frac{\beta}{2}\left|\sum_{j=1}^{N}\tilde{R}_j^{(1)}\right|^{4}+\frac{\alpha}{2} \left(\sum_{j=1}^{N}\tilde{R}_j^{(3)}\right)^{2}+\frac{\alpha}{2} \left(\sum_{j=1}^{N}\tilde{R}_j^{(1)}\right)^{2}\bigg) \, d x
\end{aligned}$$
Notice that all are squared norms or multiples of these, so it suffices to work with one term, and the others are similar. Thus,
$$\begin{aligned}
    \left|\sum_{j=1}^{N}\tilde{R}_j^{(1)}\right|^{2}=\left(\sum_{j=1}^N \tilde{R}_j^{(1)}\right)\left(\sum_{k=1}^N \bar{\tilde{R}}_k^{(1)}\right)=\sum_{j=1}^N|\tilde{R}_j^{(1)}|^2+\sum_{\substack{j, k=1 \\ j \neq k}}^{N} \tilde{R}_{j}^{(1)}  \bar{\tilde{R}}_{k}^{(1)}, 
\end{aligned}$$
Then, since for $j \neq k$ (similarly as in the case of the Lemma \ref{solitons}), we have  $$\left|\tilde{R}_{j}^{(1)}  \bar{\tilde{R}}_{k}^{(1)}\right|\leq C e^{-3\sqrt{\omega_{\star}}c_{\star}t},$$
the desired result follows. 
Therefore, once $$E^{\prime}(\vec{\tilde{R}}_j)-\tilde{\omega}_j Q_{2}^{\prime}(\vec{\tilde{R}}_j)-c_j Q_{1}^{\prime}(\vec{\tilde{R}}_j)=0,$$
from \eqref{M1}, it follows that
\begin{equation}\label{M2}
\begin{aligned}
&\partial_{t} \vec{\xi}+i\sum_{j=1}^{N} (\partial_t \tilde{\gamma}_j-s_j)A\vec{\tilde{R}}_j-\sum_{j=1}^{N}(c_j+\partial_t \tilde{x}_j) e^{i\tilde{\lambda}_j(x,t)}\partial_{x} \vec{\Phi}_{\tilde{\omega}_j}+i\sum_{j=1}^{N}\tilde{\omega}_jA_2\vec{\tilde{R}}_j+\sum_{j=1}^{N}c_jA_2\partial_x\vec{\tilde{R}}_j+\sum_{j=1}^{N}c_jA_1\vec{\tilde{R}}_j\\&\quad+\sum_{j=1}^{N}\partial_{t}\tilde{\omega}_j e^{i\tilde{\lambda}_j(x,t)}\partial_{\omega} \vec{\Phi}_{\tilde{\omega}_j}\\&=JE^{\prime \prime}(\vec{\tilde{R}}) \vec{\varepsilon}+O\left(\|\vec{\varepsilon}\|_{X}^{2}\right)+O\left(e^{-3\sqrt{\omega_{\star}}c_{\star}t}\right),
\end{aligned}
\end{equation}
where $A_2=\left(\begin{array}{llll}
0 & -2 & 0 & 0 \\
2 & 0 & 0 & 0 \\
0 & 0 & 0 & 0 \\
0 & 0 & 0 & 0
\end{array}\right)$, $A_1=\left(\begin{array}{llll}
0 & 0 & 0 & 0 \\
0 & 0 & 0 & 0 \\
0 & 0 & 0 & -\alpha \\
0 & 0 & -\alpha & 0
\end{array}\right).$

We differentiate $\left(\vec{\varepsilon}, \vec{\tilde{\Psi}}\right)_{L^2(\R)}=0$ in \eqref{orthogonality} with respect to time $t$ to get

\begin{equation}\label{M3}
\sum_{j=1}^{N}\left(\partial_{t}\varepsilon_1, \tilde{\Psi}_j\right)_{L^2(\R)}=-\sum_{j=1}^{N}\left(\varepsilon_1, \partial_{t}\left(\tilde{\Psi}_j\right)\right)_{L^2(\R)}=-\sum_{j=1}^{N}\left(\varepsilon_1, \partial_{t}\tilde{\Psi}_j\right)_{L^2(\R)}=O\left((1+|\Vec{X}_j(t)|)\|\vec{\varepsilon}\|_{X}\right),
\end{equation}
where
$$
\Vec{X}_j(t)=(\partial_t \tilde{\gamma}_j-s_j, \partial_t \tilde{x}_j+c_j,\partial_t \tilde{\omega}_j).
$$
Because of the orthogonality condition, the definition of the matrices $A_1$, $A_2$  and taking the inner product of \eqref{M1} with $\vec{\tilde{\Psi}}$, we get
\begin{equation}\label{M4}
\begin{aligned}
&\left(\partial_{t} \vec{\xi},\vec{\tilde{\Psi}}\right)+i\sum_{j=1}^{N} \left((\partial_t \tilde{\gamma}_j-s_j)A\vec{\tilde{R}}_j,\vec{\tilde{\Psi}}\right)-\sum_{j=1}^{N}\left((c_j+\partial_t \tilde{x}_j) e^{i\tilde{\lambda}_j(x,t)}\partial_{x} \vec{\Phi}_{\tilde{\omega}_j},\vec{\tilde{\Psi}}\right)+\sum_{j=1}^{N}\left(\partial_{t}\tilde{\omega}_j e^{i\tilde{\lambda}_j(x,t)}\partial_{\omega} \vec{\Phi}_{\tilde{\omega}_j},\vec{\tilde{\Psi}}\right)\\&=O\left(\|\vec{\varepsilon}\|_{X}^{2}\right)+O\left(e^{-3\sqrt{\omega_{\star}}c_{\star}t}\right),
\end{aligned}
\end{equation}
then,  we have from   \eqref{M3} and \eqref{M4} that
\begin{equation}\label{M5}
\begin{aligned}
&-\sum_{j=1}^{N}(c_j+\partial_t \tilde{x}_j)\left( e^{i\tilde{\lambda}_j(x,t)}\partial_{x} \vec{\Phi}_{\tilde{\omega}_j},\vec{\tilde{\Psi}}\right)+\partial_{t}\tilde{\omega}_j\left( e^{i\tilde{\lambda}_j(x,t)}\partial_{\omega} \vec{\Phi}_{\tilde{\omega}_j},\vec{\tilde{\Psi}}\right)\\&=O\left(\|\vec{\varepsilon}\|_{X}^{2}\right)+O\left(e^{-3\sqrt{\omega_{\star}}c_{\star}t}\right)+O\left((1+|\Vec{X}_j(t)|)\|\vec{\varepsilon}\|_{X}\right).
\end{aligned}
\end{equation}
Taking the inner product of \eqref{M1} with $\vec{\tilde{\Gamma}}$ and $\partial_{x} \vec{\tilde{R}}$, respectively, by similar arguments, we get  
\begin{equation}\label{M6}
\begin{aligned}
&\sum_{j=1}^{N}(c_j+\partial_t \tilde{x}_j)\|\partial_{x}\Phi_{\tilde{\omega}_j}^{(1)}\|_{L^2(\R)}^2 +\sum_{j=1}^{N}\partial_{t}\tilde{\omega}_j\left(\partial_{\omega}\Phi_{\omega_j}^{(1)},\partial_{x}\Phi_{\omega_j}^{
(1)}\right)_{L^2(\R)}\\&=O\left(\|\vec{\varepsilon}\|_{X}^{2}\right)+O\left(e^{-3\sqrt{\omega_{\star}}c_{\star}t}\right)+O\left((1+|\Vec{X}_j(t)|)\|\vec{\varepsilon}\|_{X}\right).
\end{aligned}
\end{equation}
and 
\begin{equation}\label{M7}
\begin{aligned}
&\sum_{j=1}^{N}(\partial_t \tilde{\gamma}_j-s_j)\|\Phi_{\tilde{\omega}_j}^{(1)}\|_{L^2(\R)}^2 =O\left(\|\vec{\varepsilon}\|_{X}^{2}\right)+O\left(e^{-3\sqrt{\omega_{\star}}c_{\star}t}\right)+O\left((1+|\Vec{X}(t)|)\|\vec{\varepsilon}\|_{X}\right).
\end{aligned}
\end{equation}
Indeed, we have
\begin{equation*}
\sum_{j=1}^{N}M_j \Vec{X}_j(t)=O\left((1+|\Vec{X}(t)|)\|\vec{\varepsilon}\|_{X}\right)+O\left(\|\vec{\varepsilon}\|_{X}^{2}\right) +O\left(e^{-3\sqrt{\omega_{\star}}c_{\star}t}\right),
\end{equation*}
where
$$
M_j=\left(\begin{array}{ccc}
\|\Phi_{\tilde{\omega}_j}^{(1)}\|_{L^2(\R)}^2 & 0 & 0 \\
0 & \|\partial_x \Phi_{\tilde{\omega}_j}^{(1)}\|_{L^2(\R)}^2 & \left(\partial_{\omega}\Phi_{\tilde{\omega}_j}^{(1)},\partial_{x}\Phi_{\tilde{\omega}_j}^{(1)}\right)_{L^2(\R)} \\
0 & \left( \partial_{x} \Phi_{\tilde{\omega}_j}^{(1)},\tilde{\Psi}_j\right) &  \left( \partial_{\omega} \Phi_{\tilde{\omega}_j}^{(1)},\tilde{\Psi}_j\right)
\end{array}\right)
$$
is an invertible matrix.
Therefore,
\begin{equation*}
|\Vec{X}(t)| \leq C\|\vec{\varepsilon}\|_{X}+O\left(\|\vec{\varepsilon}\|_{X}^{2}\right)+O\left(e^{-3\sqrt{\omega_{\star}}c_{\star}t}\right) ;
\end{equation*}
which concludes the proof.
			\end{proof}
Now, we will obtain some control estimates for the solitary waves that compose $\Vec{u}$. For this purpose, we will use an argument that involves localizing certain quantities. Therefore, we assume, without loss of generality, that the propagation speeds of the solitons satisfy $c_k \neq c_m$ for all $k \neq m$. In fact, we will further assume that $c_1 < c_2 < \cdots < c_N$.
			Let $\psi: \R \rightarrow \R$ be a $C^{\infty}$ cutoff function such that $\psi(s)=0$ for $s<-1$, $\psi(s) \in [0,1]$ if $s \in [-1,1]$, and $\psi(s)=1$ for $s>1$. We define
$ m_{j}:= (c_{j-1}+c_{j})/2, $
			and introduce the following cutoff functions, for all $(x,t)\in \R \times\R$, 
	\[
			\begin{array}{ll}
				\psi_{1}(x,t):=1, & \psi_{j}(x,t):=\psi\left(\frac{1}{\sqrt{t}}\left(x -m_{j} t\right)\right), \, \,\text { for} \,\,j=2, \ldots, N.
			\end{array}
			\]
			Next, we define
	$ 
			\phi_{j}=\psi_{j}-\psi_{j+1} $  for   $j=1, \ldots, N-1, $ and $ \phi_{N}=\psi_{N }.
$ 
			By the definition of $\phi_j$, we have ${\rm supp}(\phi_{1})\subset (-\infty,\sqrt{t}+m_{2}t]$, ${\rm supp}(\phi_{N})\subset [-\sqrt{t}+m_{N}t,\infty)$ and ${\rm supp}(\phi_{j})\subset [-\sqrt{t}+m_jt,\sqrt{t}+m_jt]$ for all $j=2,\ldots,N-1$.

 With this setup, we proceed to identify the quantities conserved by the flow. In fact, for $j=1,2,\ldots,N,$ we define  
   \begin{equation}\label{eneloc}
E_j(\Vec{u})=\int_{\R}\left(|u|^{2}+\left|\rho^{2}\right|+\left|u_{x}\right|^{2}+\alpha|u|^{2} v+\frac{\beta}{2}|u|^{4}+\frac{\alpha}{2} v^{2}+\frac{\alpha}{2} n^{2}\right)\phi_j(x) \, d x .\end{equation}
\begin{equation}\label{momloc1}
Q_{1,j}(\Vec{u})=2 \Re \int_{\R} u_{x} \bar{\rho}\phi_j(x)
\,dx-\alpha \int_{\R} n v \phi_j(x) \,d x
\end{equation}
and
\begin{equation}\label{momloc2}
Q_{2,j}(\Vec{u})=2 \Im \int_{\R} \bar{u} \rho \phi_j(x) \,d x.
\end{equation}
Finally, we define the operators 
\begin{equation}\label{definitionSj}
    \mathcal{S}_{j,loc}(\Vec{u})=E_j(\Vec{u})-c_jQ_{1,j}(\Vec{u})-\tilde{\omega}_jQ_{2,j}(\Vec{u})
\end{equation}
and 
\begin{equation}\label{definitionS}
  \mathcal{S}(\Vec{u})=\sum_{j=1}^N\mathcal{S}_{j,loc}(\Vec{u}).  
\end{equation}
Following the parameter modulation strategy, we proceed to establish a lemma for the interaction of modulated solitons.
\begin{lemma}\label{solitons}
				Let $m, n\in \{1,2,3,4\}$. There exists $C>0$ such that for all $t$ sufficiently large and for all $j\neq k\in {1, \ldots, N}$,
		\[
				\begin{gathered}
					\int_{\R}\left(\left|\tilde{R}_{k}^{(m)}(t)\right|+\left|\partial_{x} \tilde{R}_{k}^{(m)}(t)\right|\right) \phi_{j}(x,t) \,d x\lesssim e^{-4 \omega_{\star}^{\frac12}c
_{\star}t}, \\
					\int_{\R}\left(\left|\tilde{R}_{k}^{(m)}(t)\right|+\left|\partial_{x} \tilde{R}_{k}^{(m)}(t)\right|\right)\left(1-\phi_{k}( x,t)\right)\,dx\lesssim e^{-4 \omega_{\star}^{\frac12}c
_{\star}t},\\
					\int_{\R}\left(|\tilde{R}_{k}^{(m)}(t)|+|\partial_{x} \tilde{R}_{k}^{(m)}(t)|\right)\left(|\tilde{R}_{j}^{(n)}(t)|+|\partial_{x} \tilde{R}_{j}^{(n)}(t)|\right)\,dx\lesssim  e^{-4 \omega_{\star}^{\frac12}c
_{\star}t}.
				\end{gathered}
	\]
			\end{lemma}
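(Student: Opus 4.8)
The plan is to reduce all three bounds to the pointwise exponential localization of the solitons together with the fact that the relevant supports separate \emph{linearly} in $t$, at a rate that dominates the $\sqrt t$ thickness of the transition layers of the cutoffs. The arithmetic behind the exponent is this: by the definition of $\omega_\star$ one has $I_k=\tfrac{1-c_k^2-\omega_k^2}{(1-c_k^2)^2}\geq 256\,\omega_\star$, hence $\sqrt{I_k}\geq 16\,\omega_\star^{1/2}$; and since $\tilde\omega_k(t)\to\omega_k$ by \eqref{segundoboostrap}, choosing $T_0$ large guarantees $\sqrt{\tilde I_k(t)}\geq 15\,\omega_\star^{1/2}$ for all $t\ge T_0$, where $\tilde I_k=\tfrac{1-c_k^2-\tilde\omega_k^2}{(1-c_k^2)^2}$. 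Because $B_k(t)$ acts by unit-modulus phases and the phase gradient $\theta_k$ is a fixed constant, \eqref{decay} applied to the $\tilde\omega_k$-soliton yields the uniform envelope
\[
\bigl|\tilde R_k^{(m)}(x,t)\bigr|+\bigl|\partial_x\tilde R_k^{(m)}(x,t)\bigr|\leq C\,e^{-\epsilon\sqrt{\tilde I_k}\,|x-c_kt-\tilde x_k(t)|},\qquad 0<\epsilon<1 .
\]

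First I would record the geometric separation coming from the ordering $c_1<\dots<c_N$ and from $m_j=(c_{j-1}+c_j)/2$. Since $m_j$ lies strictly between $c_{j-1}$ and $c_j$, one has $|c_k-m_j|\geq c_\star/2$ for every $k\neq j$, so on $\mathrm{supp}\,\phi_j$ (contained in a $\sqrt t$-neighbourhood of the line $x=m_jt$) the centre $c_kt+\tilde x_k(t)$ of the $k$-th soliton is at distance at least $d(t):=\tfrac{c_\star}{2}t-\sqrt t-|\tilde x_k(t)|$. The same bound holds on $\mathrm{supp}(1-\phi_k)$, because $1-\phi_k$ vanishes on $[m_kt+\sqrt t,\,m_{k+1}t-\sqrt t]$, an interval whose endpoints are at distance $\geq\tfrac{c_\star}{2}t$ from $c_kt$. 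Finally, for $j\neq k$ the two centres satisfy $|(c_k t+\tilde x_k)-(c_j t+\tilde x_j)|\geq c_\star t-|\tilde x_k|-|\tilde x_j|$. By \eqref{segundoboostrap} the shifts stay bounded (indeed $\tilde x_k-x_k\to0$), so $d(t)=\tfrac{c_\star}{2}t\,(1-o(1))$ and the two-centre gap is $c_\star t\,(1-o(1))$.

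With these separations the three integrals fall to a single mechanism. For the first bound with an interior index $j\in\{2,\dots,N-1\}$, $\mathrm{supp}\,\phi_j$ has width $2\sqrt t$, so the integral is at most $C\sqrt t\,e^{-\epsilon\sqrt{\tilde I_k}\,d(t)}$. For the edge indices $j\in\{1,N\}$ and for the second bound the support is a half-line, and I would instead integrate the tail directly, using $\int_{|y|\ge d(t)}e^{-\epsilon\sqrt{\tilde I_k}|y|}\,dy=\tfrac{2}{\epsilon\sqrt{\tilde I_k}}e^{-\epsilon\sqrt{\tilde I_k}d(t)}$. For the third bound I would use $|x-a_k|+|x-a_j|\ge|a_k-a_j|$ with $a_i=c_it+\tilde x_i$, and with $\nu:=\epsilon\min(\sqrt{\tilde I_k},\sqrt{\tilde I_j})$ split $e^{-\nu(|x-a_k|+|x-a_j|)}\le e^{-\frac\nu2|a_k-a_j|}e^{-\frac\nu2|x-a_k|}$, leaving one integrable factor whose integration gives $\tfrac{4}{\nu}e^{-\frac\nu2|a_k-a_j|}$. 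In every case the genuine exponent is at least $\tfrac12\epsilon\sqrt{\tilde I_k}\,c_\star t\,(1-o(1))\ge 7\,\epsilon\,\omega_\star^{1/2}c_\star t$ for large $t$, which dominates the sub-exponential corrections ($\sqrt t$, $\tfrac1{\sqrt{\tilde I_k}}$, and the factors $e^{\epsilon\sqrt{\tilde I_k}\sqrt t}$, $e^{\epsilon\sqrt{\tilde I_k}|\tilde x_k|}$ produced by replacing $d(t)$ by $\tfrac{c_\star}{2}t$). Taking $\epsilon$ close to $1$ leaves the clean bound $\lesssim e^{-4\omega_\star^{1/2}c_\star t}$ with room to spare.

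The main obstacle is bookkeeping rather than conceptual: I must ensure that the decay rate $\sqrt{\tilde I_k}$ of the \emph{modulated} soliton never drops below the threshold fixed by $\omega_\star$, which is precisely why $T_0$ has to be taken large enough that \eqref{segundoboostrap} pins $\tilde\omega_k$ near $\omega_k$; and I must verify that the $\sqrt t$-thick, linearly moving transition layers of the $\psi_j$ genuinely separate from each soliton, so that the correction factors $e^{C\sqrt t}$ are harmless against the linear-in-$t$ exponent. The deliberately large constant $1/256$ in the definition of $\omega_\star$ (yielding a factor $8$ where only $4$ is needed) is exactly what absorbs all these losses.
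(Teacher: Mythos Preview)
Your argument is essentially correct and in fact cleaner than the paper's, but there is one factual slip you should repair. For interior $j\in\{2,\dots,N-1\}$ the support of $\phi_j=\psi_j-\psi_{j+1}$ is \emph{not} a $\sqrt t$-neighbourhood of $m_jt$: since $\psi_j\equiv1$ for $x>m_jt+\sqrt t$ and $\psi_{j+1}\equiv0$ for $x<m_{j+1}t-\sqrt t$, one has $\mathrm{supp}\,\phi_j\subset[m_jt-\sqrt t,\,m_{j+1}t+\sqrt t]$, an interval of width $(m_{j+1}-m_j)t+2\sqrt t$ growing linearly in $t$. (The paper itself has this typo in the setup but uses the correct interval in the proof.) Fortunately this does not damage your separation bound: for $k\neq j$ the distance from $c_k$ to the whole interval $[m_j,m_{j+1}]$ is still $\geq c_\star/2$ (check $k<j$ against $m_j$ and $k>j$ against $m_{j+1}$), so $|x-c_kt-\tilde x_k|\geq d(t)$ remains valid on the true support. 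The ``width $\times L^\infty$'' bound then becomes $Ct\,e^{-\epsilon\sqrt{\tilde I_k}d(t)}$ rather than $C\sqrt t\,e^{-\epsilon\sqrt{\tilde I_k}d(t)}$, which is of course irrelevant against the linear-in-$t$ exponent; alternatively, you may simply use the tail-integral argument uniformly for all $j$, not just the edge cases.

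By comparison, the paper proceeds by explicit case analysis: it fixes $\epsilon=\tfrac23$, splits into $k>j$ versus $k<j$, and within $k>j$ further into $k=j+1$ and $k>j+1$, computing each integral over $[m_jt-\sqrt t,\,m_{j+1}t+\sqrt t]$ by hand to extract the factor $e^{-\frac14\sqrt{I_k}c_\star t}$. Your unified ``distance from $c_kt$ to $\mathrm{supp}\,\phi_j$ is $\geq\tfrac{c_\star}{2}t-\sqrt t$'' replaces all of that with one line and makes the role of $c_\star$ transparent. One further simplification from the paper worth adopting: the second estimate follows immediately from the first via the partition identity $1-\phi_k=\sum_{j\neq k}\phi_j$, so no separate analysis of $\mathrm{supp}(1-\phi_k)$ is needed.
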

			\begin{proof}
				Let us fix $m=1$, the cases $m=2,3,4$ follow  similarly. Indeed, using Proposition \ref{decaimentoquadratico} and   \eqref{decay}, we obtain
				\begin{equation}\label{1interaccion}
					\begin{aligned}
						\int_{\R}\left|\tilde{R}_{k}^{(1)}(t)\right| \phi_{j}(x,t) \,d x &\leq \int_{\R}e^{-\frac{2}{3} \sqrt{I_k}| x-c_k t-x_{k} |} \phi_{j}(x,t) \,d x \\
						& \leq C \int_{\R} e^{-\frac{2}{3} \sqrt{I_k}\left|x-c_k t\right|} \phi_{j}(x,t) \,d x,
					\end{aligned}
				\end{equation}
                where $I_k=\frac{1-c_j^2-\omega_j^2}{(1-c_j^2)^2}.$ 				Suppose $k>j$, with $j\in \{2, \ldots, N-1\}$.
				If $k=j+1$, using the support properties of $\phi_j$ and \eqref{1interaccion}, we have
\[
				\begin{aligned}
					\int_{\R}\left|\tilde{R}_{k}^{(1)}(t)\right| \phi_{j}(x,t) \,d x &\lesssim \int_{\R}e^{-\frac{2}{3} \sqrt{I_k}| x-c_{j+1} t |} \phi_{j}(x,t) \,d x \lesssim \int_{-\sqrt{t}+m_{j}t}^{\sqrt{t}+m_{j+1}t}e^{-\frac{2}{3} \sqrt{I_k}| x-c_{j+1} t |} \, d x\\
					&\lesssim \int_{-\sqrt{t}+m_{j+1}t}^{\sqrt{t}+m_{j+1}t}e^{-\frac{2}{3} \sqrt{I_k}| x-c_{j+1} t |} \, d x+  \int_{-\infty}^{-\sqrt{t}+m_{j+1}t}e^{-\frac{2}{3} \sqrt{I_k}| x-c_{j+1} t |} \, d x\\
					&\lesssim \int_{-\sqrt{t}+m_{j+1}t}^{\sqrt{t}+m_{j+1}t}e^{-\frac{2}{3} \sqrt{I_k}| x-m_{j+1}t+m_{j+1}t-c_{j+1} t |} \, d x+  \int_{-\infty}^{-\sqrt{t}-\frac{c_{j+1}-c_{j}}{2}t}e^{-\frac{2}{3} \sqrt{I_k}| x |} \, d x\\
					&\lesssim\int_{-\sqrt{t}+m_{j+1}t}^{\sqrt{t}+m_{j+1}t}e^{-\frac{2}{3} \sqrt{I_k}\left| x-m_{j+1}t+\frac{c_{j}-c_{j+1}}{2}t \right|} \, d x+  \int_{-\infty}^{-\sqrt{t}-\frac{c_{j+1}-c_{j}}{2}t}e^{\frac{2}{3} \sqrt{I_k} x } \, d x.
				\end{aligned}
\]
				Therefore, for $k=j+1$, we get
				\begin{equation}\label{estimative0}
					\begin{aligned}					\int_{\R}\left|\tilde{R}_{k}^{(1)}(t)\right| \phi_{j}(x,t) \,d x&\lesssim e^{-\frac{1}{3}\sqrt{I_k}c_{\star}t}\int_{-\sqrt{t}+m_{j+1}t}^{\sqrt{t}+m_{j+1}t}e^{\frac{2}{3} \sqrt{I_k}\left| x-m_{j+1}t \right|} \, d x+  e^{-\frac{2}{3} \sqrt{I_k} \left(\sqrt{t}+\frac{c_{\star}}{2}t\right)}\\
						&\lesssim e^{-\frac{1}{3}\sqrt{I_k}c_{\star}t}\int_{-\sqrt{t}}^{\sqrt{t}}e^{\frac{2}{3} \sqrt{I_k}\left| x \right|} \, d x+  e^{-\frac{2}{3} \sqrt{I_k} \left(\sqrt{t}+\frac{c_{\star}}{2}t\right)}\\
						&\lesssim e^{-\frac{1}{3}\sqrt{I_k}c_{\star}t}\left(2\sqrt{t}\right)e^{\frac{2}{3} \sqrt{I_k}\sqrt{t}} +  e^{-\frac{2}{3} \sqrt{I_k} \left(\sqrt{t}+\frac{c_{\star}}{2}t\right)}\\
						&\lesssim e^{-\frac{1}{3}\sqrt{I_k}c_{\star}t}\left(\left(2\sqrt{t}\right)e^{\frac{2}{3} \sqrt{I_k}\sqrt{t}}+e^{-\frac{2}{3} \sqrt{I_k} \sqrt{t}}\right).
					\end{aligned}
				\end{equation}
				Since, for $t$ sufficiently large, 
				\[
				\left(2\sqrt{t}\right)e^{\frac{2}{3} \sqrt{I_k}\sqrt{t}}+e^{-\frac{2}{3} \sqrt{I_k} \sqrt{t}}\leq e^{\frac{1}{12}\sqrt{I_k}c_{\star}t},
				\]
				then, from \eqref{estimative0}, we have 
				\begin{equation}\label{estimative1}
					\int_{\R}\left|\tilde{R}_{k}^{(1)}(t)\right| \phi_{j}(x,t) \,d x
					\leq e^{-\frac{1}{4}\sqrt{I_k}c_{\star}t}.
				\end{equation}
				Now, if $k>j+1$, using the support properties of $\phi_{j}$ and \eqref{1interaccion}, we obtain
	\[
				\begin{aligned}					\int_{\R} \left|\tilde{R}_{k}^{(1)}(t)\right| \phi_{j}(x,t) \,d x &\lesssim \int_{\R}e^{-\frac{2}{3} \sqrt{I_k}| x-c_k t |} \phi_{j}(x,t) \,d x\\
					&= \int_{\R} e^{-\frac{1}{3} \sqrt{I_k}| x-c_{k} t |}e^{-\frac{1}{3} \sqrt{I_k}| x-c_k t |} \phi_{j}(x,t) \,d x\\
					&=  \int_{\R}e^{-\frac{1}{3} \sqrt{I_k}\left|x-m_{j} t+m_{j} t-c_k t \right|} e^{-\frac{1}{3} \sqrt{I_k}| x-c_k t |} \phi_{j}(x,t) \,d x\\
					&\lesssim \int_{\R}e^{-\frac{1}{3} \sqrt{I_k}\left|x-m_{j} t+\frac{c_{j-1}+c_{j}}{2} t-c_k t\right|} e^{-\frac{1}{3} \sqrt{I_k}| x-c_k t |} \phi_{j}(x,t) \,d x
					\\
					& =  \int_{\R}e^{-\frac{1}{3} \sqrt{I_k}\left|x-m_{j} t+\frac{c_{j-1}-c_k}{2} t+\frac{c_{j}-c_k}{2} \right|} e^{-\frac{1}{3} \sqrt{I_k}| x-c_k t |} \phi_{j}(x,t) \,d x\\
					&\lesssim e^{-\frac{1}{3} \sqrt{I_k}\frac{| c_{j}-c_k  |t}{2}}\int_{-\sqrt{t}+m_{j}t}^{\sqrt{t}+m_{j+1}t}e^{\frac{1}{3} \sqrt{I_k}\left|x-\frac{c_{j}+c_k}{2} t \right|} e^{-\frac{1}{3} \sqrt{I_k}| x-c_k t |} \,d x.
				\end{aligned}
\]
				Therefore, for $k>j+1$, 
\[
				\begin{aligned}
					\int_{\R}&\left|\tilde{R}_{k}^{(1)}(t)\right| \phi_{j}(x,t) \,d x\\
					&\lesssim e^{-\frac{1}{6} \sqrt{I_k}c_{\star}t}\int_{-\sqrt{t}+\frac{c_{j-1}-c_k}{2}t}^{\sqrt{t}+\frac{c_{j+1}-c_k}{2}t}e^{\frac{1}{3} \sqrt{I_k}\left|x \right|} e^{-\frac{1}{3} \sqrt{I_k}\left| x+\frac{c_{j}-c_k}{2}t \right|} \,d x\\
					&\lesssim e^{-\frac{1}{6} \sqrt{I_k}c_{\star}t}\left(\int_{-\sqrt{t}+\frac{c_{j-1}-c_k}{2}t}^{\sqrt{t}+\frac{c_{j+1}-c_k}{2}t}e^{\frac{1}{3} \sqrt{I_k}\left|x \right|}\,dx\right)^{1/2}\left(\int_{\R} e^{-\frac{1}{3} \sqrt{I_k}\left| x+\frac{c_{j}-c_k}{2}t \right|} \,d x\right)^{1/2}.
				\end{aligned}
	\]
				As $k>j+1$, we have $c_k>c_{j+1}$, and therefore
				\begin{equation}\label{estimative2}
	 \begin{aligned}						\int_{\R}&\left|\tilde{R}_{k}^{(1)}(t)\right| \phi_{j}(x,t) \,d x \\
						&\lesssim e^{-\frac{1}{3} \sqrt{I_k}c_{\star}t}\left(\int_{-\sqrt{t}+\frac{c_{j-1}-c_k}{2}t}^{\sqrt{t}+\frac{c_{j+1}-c_k}{2}t}e^{-\frac{1}{3} \sqrt{I_k}x }\,dx\right)^{1/2}\left(\int_{\R} e^{-\frac{1}{3} \sqrt{I_k}\left| x+\frac{c_{j}-c_k}{2}t \right|} \,d x\right)^{1/2}\\
						&\lesssim e^{-\frac{1}{3} \sqrt{I_k} c_{\star} t}.  
					\end{aligned}
				\end{equation}
				Thus, from \eqref{estimative1} and \eqref{estimative2}, we have that, for $k>j$,
				\begin{equation*}
					\begin{aligned}
						\int_{\R}\left|\tilde{R}_{k}^{(1)}(t)\right| \phi_{j}(x,t) \,d x \leq C e^{-\frac{1}{4} \sqrt{I_k} c_{\star} t} \leq Ce^{-4 \omega_{\star}^{\frac12} c_{\star}t},
					\end{aligned}    
				\end{equation*}
				whenever $t$ is sufficiently large.
				Then, from \eqref{segundoboostrap}, for $T_0$ large enough, $\frac{1-c_k^2-\tilde{\omega}_k^2}{(1-c_k^2)^2}\leq \frac{1-c_k^2-\omega_k^2}{(1-c_k^2)^2},$
				hence
				for $k>j$,
				\begin{equation}\label{2interaccion}
					\begin{aligned}
						\int_{\R}\left|\tilde{R}_{k}^{(1)}(t)\right| \phi_{j}(x,t) \,d x \leq C e^{-\frac{1}{4} \sqrt{I_k} c_{\star} t} \leq Ce^{-4 \sqrt{\omega_{\star}} c_{\star} t}.
					\end{aligned}    
				\end{equation}
				Now, if $j>k$, using the support properties of $\phi_{j}$ and \eqref{1interaccion}, we have
\[
				\begin{aligned}
					\int_{\R}\left|\tilde{R}_{k}^{(1)}(t)\right| \phi_{j}(x,t) \,d x 
					\lesssim \int_{\R} e^{-\frac{2}{3} \sqrt{I_k}\left|x-c_k t\right|} \phi_{j}(x,t) \,d x = \int_{-\sqrt{t}+m_{j} t}^{\sqrt{t}+m_{j+1} t} e^{-\frac{2}{3} \sqrt{I_k}\left|x-c_kt\right|} \,d x.
				\end{aligned}\]
				Since $j>k$, $c_{j}>c_k$, so from the above estimate, it follows that
				\begin{equation}\label{cassoj>k}
					\begin{aligned}
						&\int_{\R}\left|\tilde{R}_{k}^{(1)}(t)\right| \phi_{j}(x,t) \,d x\\&\lesssim \int_{-\sqrt{t}+m_{k+1}t}^{\infty} e^{-\frac{2}{3} \sqrt{I_k} | x-c_kt|} \,d x \\
						& \lesssim \int_{-\sqrt{t}+m_{k+1} t}^{\sqrt{t}+m_{k+1} t} e^{-\frac{2}{3} \sqrt{I_k}\left|x-c_k t\right|} \,d x+  \int_{\sqrt{t}+m_{k+1}t}^{\infty} e^{-\frac{2}{3} \sqrt{I_k} |x-c_kt|} \,d x\\
						& =  \int_{-\sqrt{t}+m_{k+1} t}^{\sqrt{t}+m_{k+1} t} e^{-\frac{3}{2} \sqrt{I_k}\left|x-m_{k+1}t+m_{k+1}t-c_k t\right|} \,d x+  \int_{\sqrt{t}+\frac{v_{k+1}-\omega_k}{2}t}^{\infty} e^{-\frac{2}{3} \sqrt{I_k} |x|} \,d x\\
						&\lesssim e^{-\frac{3}{2} \sqrt{I_k} \frac{|c_{k+1}-c_k|}{2}t}\int_{-\sqrt{t}+m_{k+1} t}^{\sqrt{t}+m_{k+1} t}e^{\frac{2}{3} \sqrt{I_k} |x-m_{k+1}t|}dx+ e^{-\frac{2}{3} \sqrt{I_k} \sqrt{t}}e^{-\frac{1}{3} \sqrt{I_k} c_{\star}t}\\
						&\lesssim e^{-\frac{1}{3} \sqrt{I_k} c_{\star}t}(2\sqrt{t})e^{\frac{2}{3} \sqrt{I_k} \sqrt{t}}+ e^{-\frac{2}{3} \sqrt{I_k} \sqrt{t}}e^{-\frac{1}{3} \sqrt{I_k} c_{\star}t}\\
						&\lesssim e^{-\frac{1}{3} \sqrt{I_k} c_{\star}t}\left((2\sqrt{t})e^{\frac{2}{3} \sqrt{I_k} \sqrt{t}}+e^{-\frac{2}{3} \sqrt{I_k} \sqrt{t}}\right).
					\end{aligned}    
				\end{equation}
				Since for $t$ sufficiently large %such that
	\[(2\sqrt{t})e^{\frac{2}{3} \sqrt{I_k} \sqrt{t}}+e^{-\frac{2}{3} \sqrt{I_k} \sqrt{t}} \leq e^{\frac{1}{12} \sqrt{I_k} c_{\star}t},
		\]
				we have that estimate \eqref{cassoj>k} becomes
				\begin{equation}\label{3interaccion}
					\begin{aligned}
						\int_{\R}\left|\tilde{R}_{k}^{(1)}(t)\right| \phi_{j}(x,t) \,d x\leq e^{-4 \sqrt{\omega_{\star}} c_{\star}t}.
					\end{aligned}    
				\end{equation}
				Therefore, for $j\neq k$ with $j\in \{2, \ldots, N-1\}$, from \eqref{2interaccion} and \eqref{3interaccion}
		\[\int_{\R}\left|\tilde{R}_{k}^{(1)}(t)\right| \phi_{j}(x,t) \,d x\lesssim e^{-4 \sqrt{\omega_{\star}} c_{\star}t}.\]
				The estimate for the term $\left|\partial_{x} \tilde{R}_{k}^{(1)}\right| \phi_{j}$ is obtained in a similar manner. Furthermore, the cases $j=1$ and $j=N$ can be obtained using a similar argument.
				
				We conclude that for sufficiently large $t$,
	\[\int_{\R}\left(\left|\tilde{R}_{k}^{(1)}(t)\right|+\left|\partial_{x} \tilde{R}_{k}^{(1)}(t)\right|\right) \phi_{j}(x,t) \,d x\lesssim e^{-4 \omega_{\star}^{\frac12}c
_{\star}t}, \, \, \text{$j\neq k$}.\]
				The second estimate follows immediately, since  we have $1-\phi_{k}=\sum_{j\neq k}\phi_{j}$ for $k \in \{1,\ldots,N\}$.
				To obtain the last estimate of the present lemma, we can proceed in the same way.
			\end{proof}

		Now we propose a result that relates the definition of the operator \(\mathcal{S}_{j,loc}\) in the sum of solitons with the operator \(\mathcal{S}_{j,loc}\) defined in the solitons.
\begin{lemma}\label{taylorS}
There exists \(T_{0}\) such that if \(t_{0} > T_{0}\), then for all \(t \in [t_{0}, T^{n}]\),
\begin{equation}\label{formSfinal}
    \mathcal{S}( \Vec{u}(t), t) = \sum_{j=1}^{N}\mathcal{S}_{j,loc}\left( \Vec{\tilde{R}}_{j}(t), t \right) + \mathcal{H}_{loc}( \vec{\varepsilon}(t), t) + O\left(e^{-3  \sqrt{\omega_{\star}} c_{\star} t}\right),
\end{equation}
where $$\begin{aligned}
    \mathcal{H}_{loc}( \vec{\varepsilon}(t), t)&=\int_{\R}|\partial_{x}\varepsilon_1|^2\,dx+\int_{\R}|\varepsilon_1|^2\,dx+\int_{\R}|\varepsilon_2|^2\,dx\\&\quad+\frac{\alpha}{2}\int_{\R}|\varepsilon_3|^2\,dx+\frac{\alpha}{2}\int_{\R}|\varepsilon_4|^2\,dx+\frac{\beta}{2}\int_{\R}|\varepsilon_1|^4\,dx\\
    &\quad +\sum_{j=1}^N \alpha \int_{\R}|\varepsilon_1|^2R_{j}^{(3)}\,dx+\sum_{j=1}^N 2\alpha \int_{\R} R_{j}^{(1)}\varepsilon_3\overline{\varepsilon}_1\,dx\\&\quad+\sum_{j=1}^N \alpha c_j \int_{\R} \varepsilon_3 \varepsilon_4 \phi_j\,dx +\sum_{j=1}^N 2 \omega_j \int_{\R} \varepsilon_2 \overline{\varepsilon}_1\phi_j\,dx \\
    &\quad +\sum_{j=1}^N 2 \beta \int_{\R} \Re(R_j^{(1)} \overline{\varepsilon}_1)R_j^{(1)} \overline{\varepsilon}_1\,dx+\sum_{j=1}^N 2 c_j \int_{\R} \varepsilon_2 \partial_{x}\overline{\varepsilon}_1\,dx.
\end{aligned}$$

\end{lemma}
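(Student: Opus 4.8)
The plan is to expand the localized action around the sum of modulated solitons and then reorganize by orders in $\vec{\varepsilon}=\vec{u}-\vec{\tilde{R}}$. First I would exploit the telescoping identity $\sum_{j=1}^{N}\phi_{j}=\psi_{1}=1$ (so $\{\phi_j\}$ is a partition of unity), which together with \eqref{definitionSj}--\eqref{definitionS} gives $\mathcal{S}(\vec{u})=E(\vec{u})-\sum_{j=1}^{N}\big(c_{j}Q_{1,j}(\vec{u})+\tilde{\omega}_{j}Q_{2,j}(\vec{u})\big)$; here the energy density recombines into the \emph{global} $E$, while only the momentum pieces stay genuinely $\phi_j$-weighted because their coefficients $c_j,\tilde\omega_j$ depend on $j$. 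Since the nonlinearities in $E$ are polynomial (the cubic $\alpha|u|^{2}v$ and quartic $\tfrac{\beta}{2}|u|^{4}$), substituting $\vec{u}=\vec{\tilde{R}}+\vec{\varepsilon}$ yields an \emph{exact, finite} Taylor expansion $\mathcal{S}(\vec{\tilde{R}}+\vec{\varepsilon})=\mathcal{S}(\vec{\tilde{R}})+\langle\mathcal{S}'(\vec{\tilde{R}}),\vec{\varepsilon}\rangle+\tfrac12\langle\mathcal{S}''(\vec{\tilde{R}})\vec{\varepsilon},\vec{\varepsilon}\rangle+(\text{cubic})+(\text{quartic})$, and I would treat the four blocks separately.

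For the zeroth-order block I would replace $\vec{\tilde{R}}$ by $\vec{\tilde{R}}_{j}$ inside each $\mathcal{S}_{j,loc}$: on $\operatorname{supp}\phi_{j}$ every other profile $\vec{\tilde{R}}_{k}$, $k\neq j$, is exponentially small, so by the interaction estimates of Lemma \ref{solitons} the cross terms are $\lesssim e^{-4\omega_{\star}^{1/2}c_{\star}t}$; summing over $j$ produces $\sum_{j=1}^{N}\mathcal{S}_{j,loc}(\vec{\tilde{R}}_{j})+O(e^{-3\sqrt{\omega_{\star}}c_{\star}t})$, exactly the first term of \eqref{formSfinal}. For the first-order (linear) block I would invoke the identity established in the proof of Lemma \ref{lemamodulation}, namely that each modulated profile is a critical point of the \emph{unlocalized} action, $E'(\vec{\tilde{R}}_{j})-\tilde{\omega}_{j}Q_{2}'(\vec{\tilde{R}}_{j})-c_{j}Q_{1}'(\vec{\tilde{R}}_{j})=0$. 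Writing the localized variational derivative as its bulk part plus the commutators generated by moving $\phi_{j}$ through the $\partial_{x}$'s, the bulk vanishes up to the difference $\vec{\tilde{R}}-\vec{\tilde{R}}_{j}$ (again controlled by Lemma \ref{solitons}), while each commutator carries a factor $\phi_{j}'$ supported in the transition zone $|x-m_{j}t|\sim\sqrt{t}$, far from every soliton center $x\sim c_{j}t$; there the profiles decay like $e^{-ct}$ by \eqref{decay}, so pairing against $\vec{\varepsilon}$ and using \eqref{segundoboostrap} leaves a contribution of order $e^{-3\sqrt{\omega_{\star}}c_{\star}t}$.

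The second-order block is where $\mathcal{H}_{loc}$ is produced. Expanding $\tfrac12\langle\mathcal{S}''(\vec{\tilde{R}})\vec{\varepsilon},\vec{\varepsilon}\rangle$ with $E''$ read from \eqref{energy2} and the localized second variations of $Q_{1,j},Q_{2,j}$ (the weighted $B_{1},B_{2}$): the terms carrying no soliton factor are integrated against $\sum_{j}\phi_{j}=1$ and hence appear \emph{unweighted}, giving $\int|\partial_{x}\varepsilon_{1}|^{2}+|\varepsilon_{1}|^{2}+|\varepsilon_{2}|^{2}+\tfrac{\alpha}{2}|\varepsilon_{3}|^{2}+\tfrac{\alpha}{2}|\varepsilon_{4}|^{2}$; the terms carrying the $j$-dependent coefficients $c_{j},\omega_{j}$ remain $\phi_{j}$-weighted (the lines $\alpha c_{j}\,\varepsilon_{3}\varepsilon_{4}$, $2\omega_{j}\,\varepsilon_{2}\overline{\varepsilon}_{1}$, $2c_{j}\,\varepsilon_{2}\partial_{x}\overline{\varepsilon}_{1}$); and the terms carrying a soliton factor $R_{j}^{(1)}$ or $R_{j}^{(3)}$ are freed of their cutoff by Lemma \ref{solitons} (replacing $\phi_{j}$ by $1$ costs $\int|R_{j}^{(m)}|(1-\phi_{j})\lesssim e^{-4\omega_{\star}^{1/2}c_{\star}t}$), yielding the cross lines $\alpha|\varepsilon_{1}|^{2}R_{j}^{(3)}$, $2\alpha R_{j}^{(1)}\varepsilon_{3}\overline{\varepsilon}_{1}$ and $2\beta\,\Re(R_{j}^{(1)}\overline{\varepsilon}_{1})R_{j}^{(1)}\overline{\varepsilon}_{1}$. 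The genuinely quartic piece $\tfrac{\beta}{2}\int|\varepsilon_{1}|^{4}$ of $E$ is retained verbatim (it is nonnegative for $\beta>0$ and will be needed for coercivity), whereas the genuinely cubic pure-$\varepsilon$ remainders, chiefly $\alpha\int|\varepsilon_{1}|^{2}\varepsilon_{3}$ and $2\beta\int\Re(\tilde{R}_{1}\overline{\varepsilon}_{1})|\varepsilon_{1}|^{2}$, are estimated by $\|\vec{\varepsilon}\|_{X}^{3}$ through Sobolev embedding and absorbed, via the running smallness $\|\vec{\varepsilon}\|_{X}\lesssim e^{-\omega_{\star}^{1/2}c_{\star}t}$, into the $O(e^{-3\sqrt{\omega_{\star}}c_{\star}t})$ error.

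The main obstacle is the bookkeeping inside the second-order block: one must track precisely which cutoffs survive and which are removed, keep every integration by parts consistent so that each $\partial_{x}$ falling on $\phi_{j}$ is shown to land in the error, and verify that the three families of discarded quantities — soliton--soliton interactions, localization commutators, and cubic-or-higher $\varepsilon$-remainders — are all dominated by $e^{-3\sqrt{\omega_{\star}}c_{\star}t}$ under the bounds \eqref{segundoboostrap}. I expect the genuinely delicate point to be the commutator/localization estimates, where the decay of the profiles across the transition region must beat the $O(\sqrt{t})$ growth of $\operatorname{supp}\phi_{j}'$; since this is precisely the mechanism already quantified in Lemma \ref{solitons}, I would model those estimates directly on its proof, and I would also double-check that the cubic remainders of size $\|\vec{\varepsilon}\|_{X}^{3}$ indeed fall below the stated error using the sharp modulation control.
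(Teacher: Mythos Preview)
Your proposal is correct and follows essentially the same route as the paper's proof: write $\mathcal{S}(\vec{u})=E(\vec{u})-\sum_{j}(c_{j}Q_{1,j}+\tilde\omega_{j}Q_{2,j})$ via $\sum_j\phi_j=1$, expand each piece at $\vec{u}=\vec{\tilde{R}}+\vec{\varepsilon}$, kill the linear block with the critical-point identity \eqref{pointcrit}, control all soliton--soliton cross terms and all ``replace $\phi_j$ by $1$ near $\tilde R_j$'' errors by Lemma~\ref{solitons}, and absorb the pure cubic $\varepsilon$-remainders into $O(e^{-3\sqrt{\omega_\star}c_\star t})$ using the running bootstrap bound on $\|\vec{\varepsilon}\|_X$. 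The only cosmetic difference is that the paper carries out the expansion term by term (each integral of $E$, then $Q_{1,j}$, then $Q_{2,j}$) rather than grouping by Taylor order as you do; in particular the ``commutator'' terms you anticipate do not appear explicitly in the paper because no integration by parts against $\phi_j$ is performed in the linear block---the cancellation comes directly from \eqref{pointcrit} after stripping cutoffs with Lemma~\ref{solitons}.
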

\begin{proof} 
Let $t \in [t_{0},T^{n}]$ and fix $j \in\{1, \ldots, N\}$. Observe that, from the definition \eqref{definitionSj} and \eqref{definitionS}, we have 
\begin{equation}\label{primerestima}
\begin{aligned}
\mathcal{S}( \Vec{u}) & = E(\vec{u})-\sum_{j=1}^N c_j Q_{1,j}(\vec{u})-\sum_{j=1}^N \tilde{\omega}_j Q_{2,j}(\vec{u}).
\end{aligned}
\end{equation}
Initially, we will estimate each term that composes $\mathcal{S}$ by taking $\vec{u}=\vec{\tilde{R}}+\vec{\varepsilon}.$

In fact, \begin{equation}
    \begin{aligned}        E(\vec{u})&=E\left(\sum_{j=1}^{N}\vec{\tilde{R}}_j +\vec{\varepsilon}\right)\\&=\int_\rr\left|\sum_{s=1}^{N} \partial_{x} \tilde{R}_{s}^{(1)}+\varepsilon_1\right|^{2} \,dx + \int_\rr\left|\sum_{k=1}^{N}  \tilde{R}_{k}^{(2)}+\varepsilon_2\right|^{2} \,dx+ \frac{\alpha}{2}\int_\rr\left|\sum_{k=1}^{N}  \tilde{R}_{k}^{(4)}+\varepsilon_4\right|^{2} \,dx \\
    & \quad + \frac{\beta}{2} \int_\rr\left|\sum_{k=1}^{N}  \tilde{R}_{k}^{(1)}+\varepsilon_1\right|^{4} \,dx + \alpha\Re \int_\rr\left(\sum_{k=1}^{N} \tilde{R}_{k}^{(3)}+\varepsilon_3\right)\left|\sum_{s=1}^{N} \bar{\tilde{R}}_{s}^{(1)}+\varepsilon_1\right|^{2} \,dx\\
    &\quad + \frac{\alpha}{2}\int_\rr\left|\sum_{k=1}^{N}  \tilde{R}_{k}^{(3)}+\varepsilon_3\right|^{2} \,dx+ \frac{\alpha}{2}\int_\rr\left|\sum_{k=1}^{N}  \tilde{R}_{k}^{(1)}+\varepsilon_1\right|^{2} \,dx.
    \end{aligned}
\end{equation}

To estimate the terms of $E$, we will use Lemma \ref{solitons} part $3$, as follows: $$\begin{aligned}
    \int_\rr\left|\sum_{s=1}^{N} \partial_{x} \tilde{R}_{s}^{(1)}+\varepsilon_1\right|^{2} \,dx&=\int_\rr\left|\sum_{s=1}^{N} \partial_{x} \tilde{R}_{s}^{(1)}\right|^{2} \,dx+2\Re\int_{\R}\left(\sum_{s=1}^{N} \partial_{x} \tilde{R}_{s}^{(1)}\right)\partial_x \overline{\varepsilon}_1 \,dx+\int_{\R}\left|\partial_x\varepsilon_1\right|^{2} \,dx. 
\end{aligned}$$
We will expand the terms of the above integrals, so
$$
\begin{aligned}
 \left|\sum_{s=1}^{N} \partial_{x} \tilde{R}_{s}^{(1)}\right|^{2}&=\left(\partial_{x} \tilde{R}_{1}^{(1)}+\cdots+\partial_{x} \tilde{R}_{N}^{(1)}\right)\left(\partial_{x} \bar{R}_{1}^{(1)}+\cdots+\partial_{x} \bar{R}_{N}^{(1)}\right),
\end{aligned}
$$
whence,
\begin{equation*}
\left|\sum_{s=1}^{N} \partial_{x} \tilde{R}_{s}^{(1)}\right|^{2}=\sum_{s=1}^{N}\left|\partial_{x} \tilde{R}_{s}^{(1)}\right|^{2}+\sum_{\substack{s, m=1 \\ s \neq m}}^{N} \partial_{x} \tilde{R}_{s}^{(1)} \partial_{x} \bar{\tilde{R}}_{j}^{(1)}.
\end{equation*} 
Therefore, from Lemma \ref{solitons},$$
\begin{aligned}
\int_\rr\left|\sum_{s=1}^{N} \partial_{x} \tilde{R}_{s}^{(1)}+\varepsilon_1\right|^{2} \,dx
    &=\int_\rr\left|\sum_{s=1}^{N} \partial_{x} \tilde{R}_{s}^{(1)}\right|^{2} \,dx+2\Re\int_{\R}\left(\sum_{s=1}^{N} \partial_{x} \tilde{R}_{s}^{(1)}\right)\partial_x \overline{\varepsilon}_1 \,dx+\int_{\R}\left|\partial_x\varepsilon_1\right|^{2} \,dx\\
    &=\sum_{j=1}^{N} \int_{\R}\left|\partial_{x} \tilde{R}_{j}^{(1)}\right|^{2} \,dx-\sum_{j=1}^{N}\Re\int_{\R}  2\partial_{xx}^{2} \tilde{R}_{j}^{(1)} \overline{\varepsilon}_1 \,dx+\int_{\R}\left|\partial_{x}\varepsilon_1\right|^{2} \,dx\\
    &\quad+O\left(e^{-3\sqrt{\omega_{\star}}c_{\star}t}\right).
    \end{aligned}$$
Similarly to the previous computations, we have, $$\begin{aligned}
    \int_\rr\left|\sum_{s=1}^{N} \tilde{R}_{s}^{(2)}+\varepsilon_2\right|^{2} \,dx&=\int_\rr\left|\sum_{s=1}^{N}  \tilde{R}_{s}^{(2)}\right|^{2} \,dx+2\Re\int_{\R}\left(\sum_{s=1}^{N} \tilde{R}_{s}^{(2)}\right) \overline{\varepsilon}_2 \,dx+\int_{\R}\left|\varepsilon_2\right|^{2} \,dx \\
    &=\sum_{j=1}^{N} \int_{\R}\left| \tilde{R}_{j}^{(2)}\right|^{2} \,dx+\sum_{j=1}^{N}\Re\int_{\R}  2 \tilde{R}_{j}^{(2)} \overline{\varepsilon}_2 \,dx+\int_{\R}\left|\varepsilon_2\right|^{2} \,dx+O\left(e^{-3\sqrt{\omega_{\star}}c_{\star}t}\right),
\end{aligned}$$
$$\begin{aligned}
    \int_\rr\left|\sum_{s=1}^{N} \tilde{R}_{s}^{(1)}+\varepsilon_1\right|^{2} \,dx&=\int_\rr\left|\sum_{s=1}^{N}  \tilde{R}_{s}^{(1)}\right|^{2} \,dx+2\Re\int_{\R}\left(\sum_{s=1}^{N} \tilde{R}_{s}^{(1)}\right) \overline{\varepsilon}_1 \,dx+\int_{\R}\left|\varepsilon_1\right|^{2} \,dx \\
    &=\sum_{j=1}^{N} \int_{\R}\left| \tilde{R}_{j}^{(1)}\right|^{2} \,dx+\sum_{j=1}^{N}\Re\int_{\R}  2 \tilde{R}_{j}^{(1)} \overline{\varepsilon}_1 \,dx+\int_{\R}\left|\varepsilon_1\right|^{2} \,dx+O\left(e^{-3\sqrt{\omega_{\star}}c_{\star}t}\right), 
\end{aligned}$$
$$\begin{aligned}
    \frac{\alpha}{2}\int_\rr\left(\sum_{s=1}^{N} \tilde{R}_{s}^{(3)}+\varepsilon_3\right)^{2} \,dx&=\frac{\alpha}{2}\int_\rr\left(\sum_{s=1}^{N}  \tilde{R}_{s}^{(3)}\right)^{2} \,dx+2\frac{\alpha}{2}\Re\int_{\R}\left(\sum_{s=1}^{N} \tilde{R}_{s}^{(3)}\right) \overline{\varepsilon}_3 \,dx+\frac{\alpha}{2}\int_{\R}\left(\varepsilon_3\right)^{2} \,dx \\
    &=\frac{\alpha}{2}\sum_{j=1}^{N} \int_{\R}\left( \tilde{R}_{j}^{(3)}\right)^{2} \,dx+\sum_{j=1}^{N}\Re\int_{\R}  \alpha \tilde{R}_{j}^{(3)} \overline{\varepsilon}_3 \,dx+\frac{\alpha}{2}\int_{\R}\left(\varepsilon_1\right)^{2} \,dx\\
&\quad+O\left(e^{-3\sqrt{\omega_{\star}}c_{\star}t}\right)
\end{aligned}$$
and $$\begin{aligned}
    &\frac{\alpha}{2}\int_\rr\left(\sum_{s=1}^{N} \tilde{R}_{s}^{(4)}+\varepsilon_4\right)^{2} \,dx\\&=\frac{\alpha}{2}\int_\rr\left(\sum_{s=1}^{N}  \tilde{R}_{s}^{(4)}\right)^{2} \,dx+2\frac{\alpha}{2}\Re\int_{\R}\left(\sum_{s=1}^{N} \tilde{R}_{s}^{(4)}\right) \overline{\varepsilon}_4 \,dx+\frac{\alpha}{2}\int_{\R}\left(\varepsilon_4\right)^{2} \,dx \\
    &=\frac{\alpha}{2}\sum_{j=1}^{N} \int_{\R}\left( \tilde{R}_{j}^{(4)}\right)^{2} \,dx+\sum_{j=1}^{N}\Re\int_{\R}  \alpha \tilde{R}_{j}^{(4)} \overline{\varepsilon}_4\,dx+\frac{\alpha}{2}\int_{\R}\left(\varepsilon_4\right)^{2} \,dx+O\left(e^{-3\sqrt{\omega_{\star}}c_{\star}t}\right).
\end{aligned}$$
Now, we have 
$$\begin{aligned}
    &\alpha\int_\rr\left(\sum_{s=1}^{N} \tilde{R}_{s}^{(3)}+\varepsilon_3\right)\left|\sum_{s=1}^{N} \tilde{R}_{s}^{(1)}+\varepsilon_1\right|^2\,dx\\&=\alpha\int_\rr\left(\sum_{s=1}^{N}  \tilde{R}_{s}^{(3)}\right) \left|\sum_{s=1}^{N} \tilde{R}_{s}^{(1)}\right|^2\,dx+\alpha\int_\rr\left(\sum_{s=1}^{N}  \tilde{R}_{s}^{(3)}\right) \left|\varepsilon_1\right|^2\,dx\\&\quad+\alpha\int_\rr\left|\sum_{s=1}^{N}  \tilde{R}_{s}^{(1)}\right|^2 \varepsilon_3\,dx+\alpha\int_\rr\varepsilon_3 |\varepsilon_1|^2\,dx\\
    &\quad+\alpha \int_{\R}\left( \sum_{j=1}^{N}\tilde{R}_{j}^{(3)}\right) 2\Re \left(\sum_{k=1}^{N}\tilde{R}_{k}^{(1)} \overline{\varepsilon}_1\right)\,dx+\alpha \int_{\R} 2\Re \left(\sum_{k=1}^{N}\tilde{R}_{k}^{(1)} \overline{\varepsilon}_1\right)\varepsilon_3\,dx.
\end{aligned}$$
Furthermore, note that 
\begin{equation}\label{somadouble}
\begin{aligned}
&\left(\sum_{k=1}^{N} \tilde{R}_{k}^{(3)}\right)\left|\sum_{s=1}^{N} \bar{\tilde{R}}_{s}^{(1)}\right|^{2}\\ &=\tilde{R}_{j}^{(3)}\left(\bar{\tilde{R}}_{j}^{(1)}\right)^{2}+2\sum_{\substack{s=1\\ s\neq j}}^{N} \bar{\tilde{R}}_{s}^{(1)}\bar{\tilde{R}}_{j}^{(1)}\tilde{R}_{j}^{(3)}+ \sum_{\substack{k=1\\k\neq j}}^{N} \tilde{R}_{k}^{(3)}\,(\bar{\tilde{R}}_{j}^{(1)})^{2} + \left(\sum_{\substack{s=1\\s\neq j}}^{N} \bar{\tilde{R}}_{s}^{(1)}\right)^{2}\tilde{R}_{j}^{(3)}\\
&\quad +2\left(\sum_{\substack{k=1\\k\neq j}}^{N} \tilde{R}_{k}^{(3)}\right)\left(\sum_{\substack{s=1\\s\neq j}}^{N} \bar{\tilde{R}}_{s}^{(1)}\right)\bar{\tilde{R}}_{j}^{(1)}+\left(\sum_{\substack{k=1\\k\neq j}}^{N} \tilde{R}_{k}^{(3)}\right)\left(\sum_{\substack{s=1\\s\neq j}}^{N} \bar{\tilde{R}}_{s}^{(1)}\right)^{2}.
\end{aligned}
\end{equation}
Thus, using Lemma \ref{solitons} along with the above estimates, we obtain
$$\begin{aligned}
    &\alpha\int_\rr\left(\sum_{s=1}^{N} \tilde{R}_{s}^{(3)}+\varepsilon_3\right)\left|\sum_{s=1}^{N} \tilde{R}_{s}^{(1)}+\varepsilon_1\right|^2\,dx\\&=\sum_{j=1}^{N}\int_\rr\alpha\left(  \tilde{R}_{j}^{(3)}\right) \left| \tilde{R}_{j}^{(1)}\right|^2\,dx+\sum_{j=1}^{N}\int_\rr\alpha\left(  \tilde{R}_{j}^{(3)}\right) \left|\varepsilon_1\right|^2\,dx+\sum_{j=1}^{N}\int_\rr \alpha\left| \tilde{R}_{j}^{(1)}\right|^2 \varepsilon_3\,dx\\&\quad+\alpha\int_\rr\varepsilon_3 |\varepsilon_1|^2\,dx+ \sum_{j=1}^{N}\int_{\R}\left( \tilde{R}_{j}^{(3)}\right) 2\alpha\Re \left(\tilde{R}_{j}^{(1)} \overline{\varepsilon}_1\right)\,dx+\sum_{j=1}^{N}\int_{\R} 2\alpha \Re \left(\tilde{R}_{j}^{(1)} \overline{\varepsilon}_1\right)\varepsilon_3\,dx
\\&\quad +O\left(e^{-3\sqrt{\omega_{\star}}c_{\star}t}\right).
\end{aligned}$$
Finally, it only remains to analyze $$\begin{aligned}
   & \frac{\beta}{2}\int_\rr\left|\sum_{s=1}^{N} \tilde{R}_{s}^{(1)}+\varepsilon_1\right|^4\,dx=\frac{\beta}{2}\int_\rr\left(\left|\sum_{s=1}^{N} \tilde{R}_{s}^{(1)}\right|^2+2\Re\left(\sum_{s=1}^{N} \tilde{R}_{s}^{(1)} \overline{\varepsilon}_1\right)+|\varepsilon_1|^2\right)^2\,dx\\
    &=\frac{\beta}{2}\int_\rr\left(\left|\sum_{s=1}^{N} \tilde{R}_{s}^{(1)}\right|^2+2\Re\left(\sum_{s=1}^{N} \tilde{R}_{s}^{(1)} \overline{\varepsilon}_1\right)\right)^2\,dx+\beta\int_\rr\left(\left|\sum_{s=1}^{N} \tilde{R}_{s}^{(1)}\right|^2+2\Re\left(\sum_{s=1}^{N} \tilde{R}_{s}^{(1)} \overline{\varepsilon}_1\right)\right)|\varepsilon_1|^2\,dx\\
    &\quad+\frac{\beta}{2}\int_\rr|\varepsilon_1|^4\,dx\\
    &=\frac{\beta}{2}\int_\rr\left|\sum_{s=1}^{N} \tilde{R}_{s}^{(1)}\right|^4\,dx+2\beta\int_\rr\left|\sum_{s=1}^{N} \tilde{R}_{s}^{(1)}\right|^2\Re\left(\sum_{s=1}^{N} \tilde{R}_{s}^{(1)} \overline{\varepsilon}_1 \right)\,dx+2\beta\int_\rr\Re\left(\sum_{s=1}^{N} \tilde{R}_{s}^{(1)} \overline{\varepsilon}_1 \right)^2\,dx\\
    &\quad+\beta\int_\rr\left|\sum_{s=1}^{N} \tilde{R}_{s}^{(1)}\right|^2|\varepsilon_1|^2\,dx +2\beta\int_\rr\Re\left(\sum_{s=1}^{N} \tilde{R}_{s}^{(1)} \overline{\varepsilon}_1 \right)|\varepsilon_1|^2\,dx+\frac{\beta}{2}\int_\rr|\varepsilon_1|^4\,dx.
\end{aligned}$$
Following the same steps as in the previous estimates and applying Lemma \ref{solitons}, we obtain
$$\begin{aligned}
   & \frac{\beta}{2}\int_\rr\left|\sum_{s=1}^{N} \tilde{R}_{s}^{(1)}+\varepsilon_1\right|^4\,dx\\&=\sum_{j=1}^{N}\int_\rr\frac{\beta}{2}\left| \tilde{R}_{j}^{(1)}\right|^4\,dx+\sum_{s=1}^{N}\int_\rr2\beta\Re\left( \tilde{R}_{j}^{(1)} \overline{\varepsilon}_1 \right)^2\,dx++\frac{\beta}{2}\int_\rr|\varepsilon_1|^4\,dx+O\left(e^{-3\sqrt{\omega_{\star}}c_{\star}t}\right).
\end{aligned}$$
Combining the previous estimates, we have
\begin{equation}
    \begin{aligned}        &E(\vec{u})\\&=\sum_{j=1}^{N} \int_{\R}\left|\partial_{x} \tilde{R}_{j}^{(1)}\right|^{2} \,dx-\sum_{j=1}^{N}\Re\int_{\R}  2\partial_{xx}^{2} \tilde{R}_{j}^{(1)} \overline{\varepsilon}_1 \,dx+\int_{\R}\left|\partial_{x}\varepsilon_1\right|^{2} \,dx+\sum_{j=1}^{N} \int_{\R}\left| \tilde{R}_{j}^{(1)}\right|^{2} \,dx +\sum_{j=1}^{N}\Re\int_{\R}  2 \tilde{R}_{j}^{(1)} \overline{\varepsilon}_1 \,dx\\&\quad+\int_{\R}\left|\varepsilon_1\right|^{2} \,dx  + \sum_{j=1}^{N} \int_{\R}\left| \tilde{R}_{j}^{(2)}\right|^{2} \,dx+\sum_{j=1}^{N}\Re\int_{\R}  2 \tilde{R}_{j}^{(2)} \overline{\varepsilon}_2 \,dx+\int_{\R}\left|\varepsilon_2\right|^{2} \,dx +\frac{\alpha}{2}\sum_{j=1}^{N} \int_{\R}\left( \tilde{R}_{j}^{(3)}\right)^{2} \,dx\\
    &\quad+\sum_{j=1}^{N}\Re\int_{\R}  \alpha \tilde{R}_{j}^{(3)} \overline{\varepsilon}_3 \,dx+\frac{\alpha}{2}\int_{\R}\left(\varepsilon_1\right)^{2} \,dx +\frac{\alpha}{2}\sum_{j=1}^{N} \int_{\R}\left( \tilde{R}_{j}^{(4)}\right)^{2} \,dx+\sum_{j=1}^{N}\Re\int_{\R}  \alpha \tilde{R}_{j}^{(4)} \overline{\varepsilon}_4 \,dx+\frac{\alpha}{2}\int_{\R}\left(\varepsilon_4\right)^{2} \,dx\\
    &\quad+\sum_{j=1}^{N}\int_\rr\alpha\left(  \tilde{R}_{j}^{(3)}\right) \left| \tilde{R}_{j}^{(1)}\right|^2\,dx+\sum_{j=1}^{N}\int_\rr\alpha\left(  \tilde{R}_{j}^{(3)}\right) \left|\varepsilon_1\right|^2\,dx+\sum_{j=1}^{N}\int_\rr \alpha\left| \tilde{R}_{j}^{(1)}\right|^2 \varepsilon_3\,dx\\&\quad+ \sum_{j=1}^{N}\int_{\R}\left( \tilde{R}_{j}^{(3)}\right) 2\alpha\Re \left(\tilde{R}_{j}^{(1)} \overline{\varepsilon}_1\right)\,dx+\sum_{j=1}^{N}\int_{\R} 2\alpha \Re \left(\tilde{R}_{j}^{(1)} \overline{\varepsilon}_1\right)\varepsilon_3\,dx+\sum_{s=1}^{N}\int_\rr2\beta\Re\left( \tilde{R}_{j}^{(1)} \overline{\varepsilon}_1 \right)^2\,dx\\&\quad+\frac{\beta}{2}\int_\rr|\varepsilon_1|^4\,dx+\sum_{j=1}^{N}\int_\rr\frac{\beta}{2}\left| \tilde{R}_{j}^{(1)}\right|^4\,dx+O\left(e^{-3\sqrt{\omega_{\star}}c_{\star}t}\right).
    \end{aligned}
\end{equation}
That is, \begin{equation}\label{1accou}
    \begin{aligned}        E(\vec{u})&=\sum_{j=1}^{N}E(\vec{\tilde{R}}_j) -\sum_{j=1}^{N}\Re\int_{\R}  2\partial_{xx}^{2} \tilde{R}_{j}^{(1)} \overline{\varepsilon}_1 \,dx+\int_{\R}\left|\partial_{x}\varepsilon_1\right|^{2} \,dx +\sum_{j=1}^{N}\Re\int_{\R}  2 \tilde{R}_{j}^{(1)} \overline{\varepsilon}_1 \,dx\\&\quad+\int_{\R}\left|\varepsilon_1\right|^{2} \,dx  +\sum_{j=1}^{N}\Re\int_{\R}  2 \tilde{R}_{j}^{(2)} \overline{\varepsilon}_2 \,dx+\int_{\R}\left|\varepsilon_2\right|^{2} \,dx+\sum_{j=1}^{N}\Re\int_{\R}  \alpha \tilde{R}_{j}^{(3)} \overline{\varepsilon}_3 \,dx\\
    &\quad+\frac{\alpha}{2}\int_{\R}\left(\varepsilon_1\right)^{2} \,dx+\sum_{j=1}^{N}\Re\int_{\R}  \alpha \tilde{R}_{j}^{(4)} \overline{\varepsilon}_4 \,dx+\frac{\alpha}{2}\int_{\R}\left(\varepsilon_4\right)^{2} \,dx+\sum_{j=1}^{N}\int_\rr\alpha\left(  \tilde{R}_{j}^{(3)}\right) \left|\varepsilon_1\right|^2\,dx\\
    &\quad+\sum_{j=1}^{N}\int_\rr \alpha\left| \tilde{R}_{j}^{(1)}\right|^2 \varepsilon_3\,dx+ \sum_{j=1}^{N}\int_{\R}\left( \tilde{R}_{j}^{(3)}\right) 2\alpha\Re \left(\tilde{R}_{j}^{(1)} \overline{\varepsilon}_1\right)\,dx+\sum_{j=1}^{N}\int_{\R} 2\alpha \Re \left(\tilde{R}_{j}^{(1)} \overline{\varepsilon}_1\right)\varepsilon_3\,dx\\&\quad+\sum_{s=1}^{N}\int_\rr2\beta\Re\left( \tilde{R}_{j}^{(1)} \overline{\varepsilon}_1 \right)^2\,dx+\frac{\beta}{2}\int_\rr|\varepsilon_1|^4\,dx+O\left(e^{-3\sqrt{\omega_{\star}}c_{\star}t}\right).
    \end{aligned}
\end{equation}
We now proceed to estimate the other components of $S$ in \eqref{primerestima} :
$$\begin{aligned}
    -\sum_{j=1}^N c_j Q_{1,j}(\vec{u})&=-\sum_{j=1}^N c_j \bigg[2\Re \int_{\R}\left(\sum_{k=1}^{N} \partial_{x} \tilde{R}_{k}^{(1)}+\partial_{x} \varepsilon_1\right)\overline{\left(\sum_{m=1}^{N}  \tilde{R}_{m}^{(2)}+ \varepsilon_2\right)}\phi_{j}\,dx\\&\quad \quad \quad \quad \quad \quad-\alpha \int_{\R}\left(\sum_{k=1}^{N}  \tilde{R}_{k}^{(3)}+\varepsilon_3\right)\left(\sum_{m=1}^{N}  \tilde{R}_{m}^{(4)}+ \varepsilon_4\right)\phi_{j}\,dx\bigg]\\
    &=-\sum_{j=1}^N c_j \bigg[2\Re \int_{\R}\left(\sum_{k=1}^{N} \partial_{x} \tilde{R}_{k}^{(1)}\right)\left(\sum_{m=1}^{N}  \overline{\tilde{R}_{m}^{(2)}}\right)\phi_{j}\,dx+2\Re \int_{\R}\left(\sum_{k=1}^{N} \partial_{x} \tilde{R}_{k}^{(1)}\right)\overline{\varepsilon}_2\phi_{j}\,dx\\
    &\quad \quad \quad \quad \quad \quad+2\Re \int_{\R}\left(\sum_{m=1}^{N}  \overline{\tilde{R}_{m}^{(2)}}\right)\partial_{x}\varepsilon_1\phi_{j}\,dx+2\Re \int_{\R}\partial_{x}\varepsilon_1 \overline{\varepsilon}_2\phi_{j}\,dx\\&\quad \quad \quad \quad \quad \quad-\alpha \int_{\R}\left(\sum_{k=1}^{N}  \tilde{R}_{k}^{(3)}\right)\left(\sum_{m=1}^{N}  \tilde{R}_{m}^{(4)}\right)\phi_{j}\,dx-\alpha \int_{\R}\left(\sum_{k=1}^{N}  \tilde{R}_{k}^{(3)}\right)\varepsilon_4\phi_{j}\,dx\\&\quad \quad \quad \quad \quad \quad -\alpha \int_{\R}\left(\sum_{k=1}^{N}  \tilde{R}_{k}^{(4)}\right)\varepsilon_3\phi_{j}\,dx-\alpha \int_{\R}\varepsilon_3\varepsilon_4\phi_{j}\,dx\bigg].
\end{aligned}$$
Now, using Lemma \ref{solitons} and similar calculations to those above, we arrive at $$\begin{aligned}
    -\sum_{j=1}^N c_j Q_{1,j}(\vec{u})
    &=-\sum_{j=1}^N c_j \bigg[\Re \int_{\R}2 \sum_{k=1}^{N}\partial_{x} \tilde{R}_{k}^{(1)}  \overline{\tilde{R}_{k}^{(2)}}\phi_{j}\,dx+ \Re\int_{\R} 2 \sum_{k=1}^{N} \partial_{x} \tilde{R}_{k}^{(1)}\overline{\varepsilon}_2\phi_{j}\,dx\\
    &\quad \quad \quad \quad \quad \quad+2\Re \int_{\R}\sum_{m=1}^{N}  \overline{\tilde{R}_{m}^{(2)}}\partial_{x}\varepsilon_1\phi_{j}\,dx+2\Re \int_{\R}\partial_{x}\varepsilon_1 \overline{\varepsilon}_2\phi_{j}\,dx\\&\quad \quad \quad \quad \quad \quad-\alpha \int_{\R}\sum_{k=1}^{N}  \tilde{R}_{k}^{(3)}  \tilde{R}_{k}^{(4)}\phi_{j}\,dx-\alpha \int_{\R}\sum_{k=1}^{N}  \tilde{R}_{k}^{(3)}\varepsilon_4\phi_{j}\,dx\\&\quad \quad \quad \quad \quad \quad -\alpha \int_{\R}\sum_{k=1}^{N}  \tilde{R}_{k}^{(4)}\varepsilon_3\phi_{j}\,dx-\alpha \int_{\R}\varepsilon_3\varepsilon_4\phi_{j}\,dx\\
    &\quad  \quad \quad \quad \quad \quad + O\left(e^{-3  \sqrt{\omega_{\star}} c_{\star} t}\right)\bigg].
\end{aligned}$$
From the definition of $\phi_j$, we have $$\begin{aligned}
    -\sum_{j=1}^N c_j Q_{1,j}(\vec{u})
    &=-\sum_{j=1}^N c_j \bigg[\Re \int_{\R}2 \partial_{x} \tilde{R}_{j}^{(1)}  \overline{\tilde{R}_{j}^{(2)}}\phi_{j}\,dx+ \Re\int_{\R} 2  \partial_{x} \tilde{R}_{j}^{(1)}\overline{\varepsilon}_2\phi_{j}\,dx\\
    &\quad \quad \quad \quad \quad \quad+2\Re \int_{\R}  \overline{\tilde{R}_{j}^{(2)}}\partial_{x}\varepsilon_1\phi_{j}\,dx+2\Re \int_{\R}\partial_{x}\varepsilon_1 \overline{\varepsilon}_2\phi_{j}\,dx\\&\quad \quad \quad \quad \quad \quad-\alpha \int_{\R}  \tilde{R}_{j}^{(3)}  \tilde{R}_{j}^{(4)}\phi_{j}\,dx-\alpha \int_{\R}  \tilde{R}_{j}^{(3)}\varepsilon_4\phi_{j}\,dx\\&\quad \quad \quad \quad \quad \quad -\alpha \int_{\R}  \tilde{R}_{j}^{(4)}\varepsilon_3\phi_{j}\,dx-\alpha \int_{\R}\varepsilon_3\varepsilon_4\phi_{j}\,dx\\
    &\quad  \quad \quad \quad \quad \quad + O\left(e^{-3  \sqrt{\omega_{\star}} c_{\star} t}\right)\bigg].
\end{aligned}$$
Therefore, \begin{equation}\label{accou2}
    \begin{aligned}
    -\sum_{j=1}^N c_j Q_{1,j}(\vec{u})
    &= -\sum_{j=1}^N c_j Q_{1,j}(\vec{\tilde{R}}_j)-\sum_{j=1}^N c_j \bigg[ \Re\int_{\R} 2  \partial_{x} \tilde{R}_{j}^{(1)}\overline{\varepsilon}_2\phi_{j}\,dx+2\Re \int_{\R}  \overline{\tilde{R}_{j}^{(2)}}\partial_{x}\varepsilon_1\phi_{j}\,dx\\
    &\quad \quad \quad \quad \quad \quad-\alpha \int_{\R}  \tilde{R}_{j}^{(3)}\varepsilon_4\phi_{j}\,dx-\alpha \int_{\R}  \tilde{R}_{j}^{(4)}\varepsilon_3\phi_{j}\,dx-\alpha \int_{\R}\varepsilon_3\varepsilon_4\phi_{j}\,dx\\
    &\quad \quad \quad \quad \quad \quad + O\left(e^{-3  \sqrt{\omega_{\star}} c_{\star} t}\right)\bigg].
\end{aligned}
\end{equation}
Finally, by performing a process very similar to the previous one, we can obtain that 
\begin{equation}\label{accou3}
    \begin{aligned}
    -\sum_{j=1}^N \tilde{\omega}_j Q_{2,j}(\vec{u})
    &= -\sum_{j=1}^N \tilde{\omega}_j Q_{2,j}(\vec{\tilde{R}}_j)-\sum_{j=1}^N \tilde{\omega}_j \bigg[ \Im\int_{\R} 2   \overline{\tilde{R}_{j}^{(1)}}\varepsilon_2\phi_{j}\,dx+\Im \int_{\R} 2 \tilde{R}_{j}^{(2)}\overline{\varepsilon}_1\phi_{j}\,dx+\int_{\R}2\overline{\varepsilon}_1\varepsilon_2\phi_{j}\,dx\\
    &\quad \quad \quad \quad \quad \quad + O\left(e^{-3  \sqrt{\omega_{\star}} c_{\star} t}\right)\bigg].
\end{aligned}
\end{equation}
Using the previous estimates, \eqref{1accou}-\eqref{accou3}, along with the fact that $\vec{\tilde{R}}_j$ satisfies \eqref{pointcrit} for each $j$, it follows that for all $j \in \{1, \ldots, N\},$
$$
\mathcal{S}( \Vec{u}(t), t) = \sum_{j=1}^{N}\left(E(\vec{\tilde{R}}_j)-c_jQ_{1,J}(\vec{\tilde{R}}_j)-\tilde{\omega}_jQ_{2,j}(\vec{\tilde{R}}_j)\right) + \mathcal{H}_{loc}( \vec{\varepsilon}(t), t) + O\left(e^{-3  \sqrt{\omega_{\star}} c_{\star} t}\right),
$$
where $$\begin{aligned}
    \mathcal{H}_{loc}( \vec{\varepsilon}(t), t)&=\int_{\R}|\partial_{x}\varepsilon_1|^2\,dx+\int_{\R}|\varepsilon_1|^2\,dx+\int_{\R}|\varepsilon_2|^2\,dx\\&\quad+\frac{\alpha}{2}\int_{\R}|\varepsilon_3|^2\,dx+\frac{\alpha}{2}\int_{\R}|\varepsilon_4|^2\,dx+\frac{\beta}{2}\int_{\R}|\varepsilon_1|^4\,dx\\
    &\quad +\sum_{j=1}^N \alpha \int_{\R}|\varepsilon_1|^2R_{j}^{(3)}\,dx+\sum_{j=1}^N 2\alpha \int_{\R} R_{j}^{(1)}\varepsilon_3\overline{\varepsilon}_1\,dx\\&\quad+\sum_{j=1}^N \alpha c_j \int_{\R} \varepsilon_3 \varepsilon_4 \phi_j\,dx +\sum_{j=1}^N 2 \tilde{\omega}_j \int_{\R} \varepsilon_2 \overline{\varepsilon}_1\phi_j\,dx \\
    &\quad +\sum_{j=1}^N 2 \beta \int_{\R} \Re(R_j^{(1)} \overline{\varepsilon}_1)R_j^{(1)} \overline{\varepsilon}_1\,dx+\sum_{j=1}^N 2 c_j \int_{\R} \varepsilon_2 \partial_{x}\overline{\varepsilon}_1\,dx.
\end{aligned}$$
Finally, note that $$E_j(\vec{\tilde{R}}_j)=E(\vec{\tilde{R}}_j)++ O\left(e^{-3  \sqrt{\omega_{\star}} c_{\star} t}\right),$$
which is due to the definition of $\phi_j$ for each $j$. For example, we can take a component of the sums in $E_j$ and note that: $$\int_\rr\left|\sum_{s=1}^{N} \partial_{x} \tilde{R}_{s}^{(1)}\right|^{2} \phi_{j}\,dx=\int_\rr\left|\partial_{x} \tilde{R}_{j}^{(1)}\right|^{2} \,dx-\int_\rr\left|\partial_{x} \tilde{R}_{j}^{(1)}\right|^{2}\sum_{\substack{k=1 \\ k \neq j}}^{N}\phi_{k} \,dx,$$
Thus, from Lemma \ref{solitons}, the assertion follows.
\end{proof}
We will now prove that the operator $\mathcal{H}_{\operatorname{loc}}$ still satisfies the coercivity property.
\begin{proposition}\label{coercivity2}
				There exists $K>0$ such that
				\[\begin{aligned}
					\langle \mathcal{H}_{loc} \vec{\varepsilon},\vec{\varepsilon}\rangle &\geq K\|\vec{\varepsilon}\|_{X}^2
				\end{aligned}\]
			\end{proposition}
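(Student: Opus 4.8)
The plan is to reduce the coercivity of $\mathcal{H}_{loc}$ to the single-soliton coercivity already proved in Lemma \ref{coercivity}, by exploiting the partition of unity $\sum_{j=1}^{N}\phi_j\equiv 1$ (which telescopes from $\phi_j=\psi_j-\psi_{j+1}$, $\phi_N=\psi_N$ and $\psi_1\equiv1$). The guiding observation is that, term by term, $\mathcal{H}_{loc}(\vec{\varepsilon})$ agrees with a fixed positive multiple of $\sum_{j=1}^N\langle H_j\,\cdot\,,\,\cdot\,\rangle$ once each contribution is localized on $\mathrm{supp}\,\phi_j$, where only the $j$-th soliton is felt.

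Concretely, I would first insert $1=\sum_j\phi_j$ into every profile-free quadratic term of $\mathcal{H}_{loc}$, that is $\int|\partial_x\varepsilon_1|^2$, $\int|\varepsilon_1|^2$, $\int|\varepsilon_2|^2$, $\tfrac{\alpha}{2}\int|\varepsilon_3|^2$, $\tfrac{\alpha}{2}\int|\varepsilon_4|^2$; the cross terms weighted by $c_j$ and $\omega_j$ already carry the cutoff $\phi_j$ inherited from $Q_{1,j}$ and $Q_{2,j}$. For the terms that contain a profile $R_j^{(1)}$ or $R_j^{(3)}$ but no cutoff, I would replace $R_j^{(m)}$ by $R_j^{(m)}\phi_j$ using the second estimate of Lemma \ref{solitons}, at a cost of $O(e^{-4\omega_{\star}^{1/2}c_{\star}t}\|\vec{\varepsilon}\|_X^2)$, since $R_j^{(m)}(1-\phi_j)$ is uniformly exponentially small. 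Writing $w_j:=\sqrt{\phi_j}\,\vec{\varepsilon}$ (after replacing, if necessary, $\phi_j$ by $\chi_j^2$ with $\chi_j\in C^\infty$ so that the square root is admissible), each grouped block becomes $\tfrac12\langle H_j w_j,w_j\rangle$ up to two kinds of errors: commutator terms in which $\partial_x$ falls on $\sqrt{\phi_j}$, bounded by $O(t^{-1/2}\|\vec{\varepsilon}\|_X^2)$ because $|\partial_x\phi_j|\lesssim t^{-1/2}$; and the genuinely quartic term $\tfrac{\beta}{2}\int|\varepsilon_1|^4$, which by Sobolev is $O(\|\vec{\varepsilon}\|_X^4)=o(\|\vec{\varepsilon}\|_X^2)$ in the bootstrap regime $\|\vec{\varepsilon}\|_X\to0$. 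This produces $\mathcal{H}_{loc}(\vec{\varepsilon})=\tfrac12\sum_{j=1}^N\langle H_j w_j,w_j\rangle+o(\|\vec{\varepsilon}\|_X^2)$.

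It then remains to apply Lemma \ref{coercivity} to each $w_j$, and here lies the main obstacle: that lemma requires the exact orthogonality $(w_{j,11},\partial_x\phi_{\omega_j})=(w_{j,11},i\phi_{\omega_j})=(w_{j,11},\Psi_{\omega_j})=0$, whereas the modulation Lemma \ref{lemamodulation} supplies orthogonality only for the global $\vec{\varepsilon}$ tested against $\partial_x\vec{\tilde{D}},\vec{\tilde{\Gamma}},\vec{\tilde{\Psi}}$. Since $\sqrt{\phi_j}\equiv1$ on a neighborhood of the soliton center $x_j+c_jt$, where $\phi_{\omega_j},\partial_x\phi_{\omega_j},\Psi_{\omega_j}$ concentrate, the localized products differ from the vanishing global ones only by quantities of the form $(\varepsilon_{11},(\sqrt{\phi_j}-1)\Psi_{\omega_j})$, which Lemma \ref{solitons} controls by $O(e^{-4\omega_{\star}^{1/2}c_{\star}t}\|\vec{\varepsilon}\|_X)$. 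I would therefore split $w_j=w_j^{\flat}+w_j^{\sharp}$, where $w_j^{\sharp}$ is the small combination of $\partial_x\phi_{\omega_j}$, $i\phi_{\omega_j}$, $\Psi_{\omega_j}$ (whose Gram matrix is invertible by Lemma \ref{kernel}) chosen so that $w_j^{\flat}$ is exactly orthogonal; the defects being exponentially small forces $\|w_j^{\sharp}\|_X\lesssim e^{-4\omega_{\star}^{1/2}c_{\star}t}\|\vec{\varepsilon}\|_X$. Then Lemma \ref{coercivity} gives $\langle H_j w_j^{\flat},w_j^{\flat}\rangle\ge\delta\|w_j^{\flat}\|_X^2$, and expanding $\langle H_j w_j,w_j\rangle$ about $w_j^{\flat}$, using the $X$-boundedness of $H_j$, yields $\langle H_j w_j,w_j\rangle\ge\delta\|w_j\|_X^2-Ce^{-4\omega_{\star}^{1/2}c_{\star}t}\|\vec{\varepsilon}\|_X^2$.

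Summing over $j$ and using $\sum_j\phi_j=1$ once more to recover $\sum_j\|w_j\|_X^2=\|\vec{\varepsilon}\|_X^2+O(t^{-1/2}\|\vec{\varepsilon}\|_X^2)$, I would arrive at $\mathcal{H}_{loc}(\vec{\varepsilon})\ge\tfrac{\delta}{4}\|\vec{\varepsilon}\|_X^2$ for $T_0$ large enough, after absorbing the $t^{-1/2}$, the $o(\|\vec{\varepsilon}\|_X^2)$, and the exponentially small contributions into the leading term; this gives the claim with, say, $K=\delta/8$. The hardest part is the second and third steps: checking that the $\sqrt{\phi_j}$-localization of $\mathcal{H}_{loc}$ reproduces exactly the operator $H_j$ whose spectrum is understood, and transferring the modulation orthogonality from $\vec{\varepsilon}$ to the localized pieces $w_j$. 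The commutator, quartic, and cross-soliton errors are then routine, given $|\partial_x\phi_j|\lesssim t^{-1/2}$, the Sobolev embedding, and Lemma \ref{solitons}.
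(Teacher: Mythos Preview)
Your proposal is correct and shares the paper's overall architecture—localize around each soliton, reduce to the single-soliton Hessian $H_j$, transfer the modulation orthogonality, and invoke Lemma \ref{coercivity}—but the localization is implemented differently. The paper does \emph{not} work with $\sqrt{\phi_j}$. Instead it introduces an auxiliary strictly positive cutoff $\Phi_B(x)=\Phi(x/B)$ of \emph{fixed} scale $B$ (independent of $t$), sets $z=\sqrt{\Phi_B}\,\vec{\varepsilon}$ (here $\Phi_B>0$ everywhere, so no square-root regularity issue arises), and proves a $\Phi_B$-weighted single-soliton coercivity $\langle \mathcal{H}_{\Phi_B}\vec{\varepsilon},\vec{\varepsilon}\rangle\geq C\int\Phi_B(|\partial_x\varepsilon_1|^2+\dots)$, with errors of size $C/B$. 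It then decomposes $\mathcal{H}_{loc}=\sum_k[\Phi_B(\cdot-x_k(t))\text{-part}]+\sum_k[(\phi_k-\Phi_B)\text{-part}]$. The second sum carries no soliton profile (those are confined under the $\Phi_B$ window) and is controlled by the \emph{pointwise positivity of the profile-free quadratic form}, namely $|\partial_x\varepsilon_1|^2+|\varepsilon_1|^2+|\varepsilon_2|^2+\tfrac{\alpha}{2}|\varepsilon_3|^2+\tfrac{\alpha}{2}|\varepsilon_4|^2-2\tilde{\omega}_k\bar{\varepsilon}_1\varepsilon_2-\alpha c_k\varepsilon_3\varepsilon_4-2c_k\partial_x\bar{\varepsilon}_1\varepsilon_2\geq\delta_k(\dots)$, which holds under condition \eqref{cond1}; combined with $\phi_k-\Phi_B\geq -e^{-L/(4B)}$ for $L$ (essentially the inter-soliton distance) large, this disposes of the far field. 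Your route trades this two-scale mechanism for a direct $\chi_j$-localization with commutator errors $O(t^{-1/2})$: more streamlined, but it requires building a genuine smooth \emph{quadratic} partition of unity (the issue you flag), whereas the paper sidesteps this entirely since $\Phi_B$ never vanishes. The paper's extra ingredient—the far-field positivity of the free symbol—is precisely what lets it avoid ever taking $\sqrt{\phi_j}$.
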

			\begin{proof}
				First, we give a localized version of Proposition \ref{coercivity}. Let $\Phi: \mathbb{R} \rightarrow \mathbb{R}$ be a $C^{2}$-function such that $\Phi(x)=\Phi(-x), \Phi^{\prime} \leqslant 0$ on $\mathbb{R}^{+}$with
				\[
				\begin{array}{ll}
					\Phi(x)=1 & \text { on }[0,1] ; \quad \Phi(x)=e^{-x} \quad \text { on }[2,+\infty) \\
					& e^{-x} \leqslant \Phi(x) \leqslant 3 e^{-x} \quad \text { on } \mathbb{R} .
				\end{array}
				\]
				
				Let $B>0$, and let $\Phi_{B}(x)=\Phi(x / B)$. Set
				\[
				\begin{aligned}
					\frac{1}{2}\langle \mathcal{H}_{\Phi_{B}}\vec{\varepsilon}, \vec{\varepsilon}\rangle&=\int_{\R}\Phi_{B}\left(\cdot-x_{0}\right)|\partial_{x}\varepsilon_1|^2\,dx+\int_{\R}\Phi_{B}\left(\cdot-x_{0}\right)|\varepsilon_1|^2\,dx+\int_{\R}\Phi_{B}\left(\cdot-x_{0}\right)|\varepsilon_2|^2\,dx\\&\quad+\frac{\alpha}{2}\int_{\R}\Phi_{B}\left(\cdot-x_{0}\right)|\varepsilon_3|^2\,dx+\frac{\alpha}{2}\int_{\R}\Phi_{B}\left(\cdot-x_{0}\right)|\varepsilon_4|^2\,dx+\frac{\beta}{2}\int_{\R}\Phi_{B}\left(\cdot-x_{0}\right)|\varepsilon_1|^4\,dx\\
    &\quad + \alpha \int_{\R}|\varepsilon_1|^2R_{0}^{(3)}\,dx+ 2\alpha \int_{\R} R_{0}^{(1)}\varepsilon_3\overline{\varepsilon}_1\,dx\\&\quad+ \alpha c_0 \int_{\R}\Phi_{B}\left(\cdot-x_{0}\right) \varepsilon_3 \varepsilon_4 \,dx +2 \omega_0 \int_{\R}\Phi_{B}\left(\cdot-x_{0}\right) \varepsilon_2 \overline{\varepsilon}_1\,dx \\
    &\quad + 2 \beta \int_{\R} \Re(R_0^{(1)} \overline{\varepsilon}_1)R_0^{(1)} \overline{\varepsilon}_1\,dx+2 c_0 \int_{\R}\Phi_{B}\left(\cdot-x_{0}\right) \varepsilon_2 \partial_{x}\overline{\varepsilon}_1\,dx.
				\end{aligned}
		\]

				For the sake of simplicity, we assume that $x_{0}=0$. We set $z_1=\varepsilon_1\sqrt{\Phi_{B}}$ , $z_2=\varepsilon_2\sqrt{\Phi_{B}}$, $z_3=\varepsilon_3\sqrt{\Phi_{B}}$ and $z_4=\varepsilon_4\sqrt{\Phi_{B}}$. Then, by simple calculations,
\[
 	\int_{\R}\left|\partial_{x} \varepsilon_1\right|^{2} \Phi_{B}\,dx=\int_{\R}\left|\partial_{x} z_1\right|^{2}\,dx+\frac{1}{4} \int|z_1|^{2}\left(\frac{\Phi_{B}^{\prime}}{\Phi_{B}}\right)^{2}\,dx-2 \int_{\R} \partial_{x} z_1 \bar{z}_1 \frac{\Phi_{B}^{\prime}}{\Phi_{B}}\,dx,\]
  $$\int_{\R}\Phi_{B}\varepsilon_2 \partial_{x}\overline{\varepsilon}_1\,dx=\int_{\R}z_2 \partial_{x}\overline{z}_1\,dx-\frac{1}{2}\int_{\R}z_2 \overline{z}_1 \frac{\Phi_{B}^{\prime}}{\Phi_{B}}\,dx$$
  \[
  \int_{\R}|\varepsilon_j|^{2} \Phi_{B}\,dx=\int_{\R}|z_j|^{2}\,dx,
  \quad j=1,2,3,4\] 
  \[
  \int_{\R}|\varepsilon_1|^{4} \Phi_{B}\,dx=\int_{\R}|z_1|^{2}\frac{1}{\Phi_{B}}\,dx,
  \] 
				and \[\int_{\R} \varepsilon_k \varepsilon_j \Phi_{B}\,dx=\int_{\R}z_j z_k\,dx. \quad j \neq k
\]
	 	Since, by definition of $\Phi_{B}$, we have $\left|\Phi_{B}^{\prime}\right| \leqslant(C / B) \Phi_{B}$, we obtain
 \[
 \begin{split}
 \int_{\R}\left|\partial_{x} z_1\right|^{2}\,dx-\frac{C}{B} \int_{\R}\left(\left|\partial_{x} z_1\right|^{2}+|z_1|^{2}\right) \,dx& \leqslant \int_{\R}\left|\partial_{x} \varepsilon_1\right|^{2} \Phi_{B}\,dx\\&
 \leqslant \int_{\R}\left|\partial_{x} z_1\right|^{2}\,dx+\frac{C}{B} \int_{\R}\left(\left|\partial_{x} z_1\right|^{2}+|z_1|^{2}\right)\,dx.
 \end{split}\]
 Also,  \[
  \int_{\R}|\varepsilon_1|^{4} \Phi_{B}\,dx\geq \int_{\R}|z_1|^{2}\,dx,
  \] 
 We also have
 $$\alpha \int_{\R}|\varepsilon_1|^2R_{0}^{(3)}\,dx=\alpha \int_{\R}|z_1|^2R_{0}^{(3)}\frac{1}{\Phi_{B}}\,dx,$$
\[
 \begin{aligned} 	2\alpha \int_{\R} R_{0}^{(1)}\varepsilon_3\overline{\varepsilon}_1\,dx=2\alpha \int_{\R} R_{0}^{(1)}z_3\overline{z}_1\frac{1}{\Phi_{B}}\,dx
				\end{aligned}
\]
and $$ 2 \beta \int_{\R} \Re(R_0^{(1)} \overline{\varepsilon}_1)R_0^{(1)} \overline{\varepsilon}_1\,dx= 2 \beta \int_{\R} \Re(R_0^{(1)} \overline{z}_1)R_0^{(1)} \overline{z}_1\frac{1}{\Phi_{B}}\,dx$$
								Since $\Phi_{B} \equiv 1$ on $[-B, B]$ and $\Phi_{\omega_{0}}^{(1)}(x) \leqslant C e^{-\left(\sqrt{\omega_{0}} / 2\right)|x|}$, we have, for all $x \in \mathbb{R}$,
				
\[
				\left|\frac{1}{\Phi_{B}}-1\right| \Phi_{\omega_{0}}^{(1)}(x) \leqslant e^{-\left(\sqrt{\omega_{0}}-2 / B\right)|x| / 2} \leqslant C e^{-\sqrt{\omega_{0}} B / 4} \leqslant \frac{1}{B}
\]
								for $B$ large enough. Thus,
\[
				\begin{aligned}
					\alpha \int_{\R}|\varepsilon_1|^2R_{0}^{(3)}\,dx\leqslant C\alpha \int_{\R}|z_1|^2R_{0}^{(3)}\,dx+\frac{C}{B} \int_{\R}|z_1|^{2}\,dx.
				\end{aligned}
\]
\[
 \begin{aligned} 	2\alpha \int_{\R} R_{0}^{(1)}\varepsilon_3\overline{\varepsilon}_1\,dx\leq C \int_{\R} R_{0}^{(1)}z_3\overline{z}_1\,dx+\frac{C}{B}\int_{\R} z_3\overline{z}_1\,dx
				\end{aligned}
\]
and $$ 2 \beta \int_{\R} \Re(R_0^{(1)} \overline{\varepsilon}_1)R_0^{(1)} \overline{\varepsilon}_1\,dx\leq C  \int_{\R} \Re(R_0^{(1)} \overline{z}_1)R_0^{(1)} \overline{z}_1\,dx+\frac{C}{B}\int_{\R} \Re( \overline{z}_1) \overline{z}_1\,dx.$$
Collecting these calculations, we obtain
\[
				\langle \mathcal{H}_{\Phi_{B}}\vec{\varepsilon}, \vec{\varepsilon}\rangle  \geqslant \langle \mathcal{H}_{0}{\vec{z}}, {\vec{z}}\rangle-\frac{C}{B} \int\left(\left|\partial_{x} z_1\right|^{2}+|z_1|^{2}+ |z_2|^{2}+|z_3|^{2}+|z_4|^{2}+|z_1|^{4}\right).
\]
								Thanks to the orthogonality conditions on ${\vec z}=(z_1,z_2,z_3,z_4)$, we verify easily using the property of $\Phi_{B}$ that
$ 
(\vec{z},\partial_{x}\vec{R})_{L^2(\R)}=(\vec{z},\vec{\Gamma})_{L^2(\R)}=(\vec{z},\vec{\Upsilon})_{L^2(\R)}=0,
$ 
				for $B$ large enough. By Proposition \ref{coercivity}, we obtain  for $B$ large enough that
\[
				\begin{aligned}
   \langle \mathcal{H}_{\Phi_{B}}\vec{\varepsilon}, \vec{\varepsilon}\rangle  & \geqslant\left(C-\frac{C}{B}\right)\|{\vec{z}}\|_{X}^{2}\\& \geqslant \frac{C}{2}\|{\vec{z}}\|_{X}^{2} \\&\geqslant \frac{C}{2}\left(1-\frac{C}{B}\right) \int_{\R}\left(\left|\partial_{x} z_1\right|^{2}+|z_1|^{2}+ |z_2|^{2}+|z_3|^{2}+|z_4|^{2}+|z_1|^{4}\right) \Phi_{B}\,dx \\
					& \geqslant \frac{C}{4} \int_{\R}\left(\left|\partial_{x} z_1\right|^{2}+|z_1|^{2}+ |z_2|^{2}+|z_3|^{2}+|z_4|^{2}+|z_1|^{4}\right) \Phi_{B}\,dx,
				\end{aligned}
\]
				implying \begin{equation}\label{claim1}
					\langle \mathcal{H}_{\Phi_{B}}\vec{\varepsilon}, \vec{\varepsilon}\rangle \geqslant C\|\vec{\varepsilon}\|_{X}^{2}.   
				\end{equation}
 Now, let $B>B_{0}$, and   $L>0$. Since $\sum_{k=1}^{N} \phi_{k}(t) \equiv 1$, we decompose $\langle \mathcal{H}\vec{\varepsilon}, \vec{\varepsilon}\rangle$ as follows:
\[
				\begin{aligned}
					&\langle \mathcal{H}_{loc}\vec{\varepsilon}, \vec{\varepsilon}\rangle=\\ & \sum_{k=1}^{N} \int_{\R} \Phi_{B}\left(\cdot-c_kt-\tilde{x}_{k}\right)\bigg\{\left|\partial_{x} \varepsilon_1\right|^{2}+|\varepsilon_1|^{2}+\frac{\beta}{2}|\varepsilon_1|^{4}+|\varepsilon_2|^{2}+\frac{\alpha}{2}|\varepsilon_3|^{2}+\frac{\alpha}{2}|\varepsilon_4|^{2}+2\tilde{\omega}_k \overline{\varepsilon}_1 \varepsilon_2\\
     &\quad +\alpha c_{k}\varepsilon_3 \varepsilon_4+2c_{k}\partial\overline{\varepsilon}_1 \varepsilon_2\bigg\}\,dx + \alpha \int_{\R}|\varepsilon_1|^2\tilde{R}_{k}^{(3)}\,dx+ 2\alpha \int_{\R} \tilde{R}_{k}^{(1)}\varepsilon_3\overline{\varepsilon}_1\,dx+2 \beta \int_{\R} \Re(\tilde{R}_k^{(1)} \overline{\varepsilon}_1)\tilde{R}_k^{(1)} \overline{\varepsilon}_1\,dx\\
					& +\sum_{k=1}^{N}\int_{\R}\left(\phi_{k}-\Phi_{B}\left(\cdot-c_kt-\tilde{x}_{k}\right)\right)\bigg\{\left|\partial_{x} \varepsilon_1\right|^{2}+|\varepsilon_1|^{2}+\frac{\beta}{2}|\varepsilon_1|^{4}+|\varepsilon_2|^{2}+\frac{\alpha}{2}|\varepsilon_3|^{2}+\frac{\alpha}{2}|\varepsilon_4|^{2}-2\tilde{\omega}_k \overline{\varepsilon}_1 \varepsilon_2\\
     &\quad -\alpha c_{k}\varepsilon_3 \varepsilon_4-2c_{k}\partial\overline{\varepsilon}_1 \varepsilon_2\bigg\}\,dx.
				\end{aligned}
	\]
				By \eqref{claim1}, for any $k=1, \ldots, N$, we have  for $B$ large enough that
\[
				\begin{aligned}
			 &\int_{\R} \Phi_{B}\left(\cdot-x_k(t)\right)		\bigg\{\left|\partial_{x} \varepsilon_1\right|^{2}+|\varepsilon_1|^{2}+\frac{\beta}{2}|\varepsilon_1|^{4}+|\varepsilon_2|^{2}+\frac{\alpha}{2}|\varepsilon_3|^{2}+\frac{\alpha}{2}|\varepsilon_4|^{2}-2\tilde{\omega}_k \overline{\varepsilon}_1 \varepsilon_2\\
     &\quad -\alpha c_{k}\varepsilon_3 \varepsilon_4-2c_{k}\partial\overline{\varepsilon}_1 \varepsilon_2\bigg\}\,dx   + \alpha \int_{\R}|\varepsilon_1|^2\tilde{R}_{k}^{(3)}\,dx+ 2\alpha \int_{\R} \tilde{R}_{k}^{(1)}\varepsilon_3\overline{\varepsilon}_1\,dx+2 \beta \int_{\R} \Re(\tilde{R}_k^{(1)} \overline{\varepsilon}_1)\tilde{R}_k^{(1)} \overline{\varepsilon}_1\,dx
					\\&\geqslant \lambda_{k} \int_{\R}\Phi_{B}\left(\cdot-x_{k}(t)\right)\left(\left|\partial_{x} \varepsilon_1\right|^{2}+|\varepsilon_2|^{2}+|\varepsilon_1|^{2}+|\varepsilon_3|^{2}+|\varepsilon_4|^{2}\right)\,dx,
				\end{aligned}
\]
				where $x_k(t)=c_k t +\tilde{x}_k$.
								Moreover, by the properties of $\Phi_{B}$ and $\phi_{k}(t)$, for $L$ large enough, we have
				
\[
				\phi_{k}(t)-\Phi_{B}\left(\cdot-x_{k}(t)\right) \geqslant-e^{-L /(4 B)},
\]
	and  for
	 $\delta_{k}=\delta_{k}\left(c_k,\tilde{\omega}_{k}\right)>0$,
\[
		\begin{aligned}
		  &  \left|\partial_{x} \varepsilon_1\right|^{2}+|\varepsilon_1|^{2}+\frac{\beta}{2}|\varepsilon_1|^{4}+|\varepsilon_2|^{2}+\frac{\alpha}{2}|\varepsilon_3|^{2}+\frac{\alpha}{2}|\varepsilon_4|^{2}-2\tilde{\omega}_k \overline{\varepsilon}_1 \varepsilon_2 -\alpha c_{k}\varepsilon_3 \varepsilon_4-2c_{k}\partial\overline{\varepsilon}_1 \varepsilon_2\\&\geqslant \delta_{k}\left(\left|\partial_{x} \varepsilon_1\right|^{2}+|\varepsilon_1|^{2}+|\varepsilon_2|^{2}+|\varepsilon_3|^{2}+|\varepsilon_4|^{2}\right) \geqslant 0.
		\end{aligned}		
\]
				So,
\[
				\begin{aligned}
					& \int_{\R}\left(\phi_{k}-\Phi_{B}\left(\cdot-x_k(t)\right)\right)\bigg\{\left|\partial_{x} \varepsilon_1\right|^{2}+|\varepsilon_1|^{2}+\frac{\beta}{2}|\varepsilon_1|^{4}+|\varepsilon_2|^{2}+\frac{\alpha}{2}|\varepsilon_3|^{2}+\frac{\alpha}{2}|\varepsilon_4|^{2}-2\tilde{\omega}_k \overline{\varepsilon}_1 \varepsilon_2\\
     &\quad -\alpha c_{k}\varepsilon_3 \varepsilon_4-2c_{k}\partial\overline{\varepsilon}_1 \varepsilon_2\bigg\}\,dx \\
					&  \geqslant \delta_{k} \int_{\R}\left(\varphi_{k}(t)-\Phi_{B}\left(\cdot-x_{k}(t)\right)\right)\left(\left|\partial_{x} \varepsilon_1\right|^{2}+|\varepsilon_1|^{2}+|\varepsilon_2|^{2}+|\varepsilon_3|^{2}+|\varepsilon_4|^{2}\right) \,dx\\
					&\geqslant-C e^{-L /(4 B)} \int_{\R}\left(\left|\partial_{x} \varepsilon_1\right|^{2}+|\varepsilon_1|^{2}+|\varepsilon_2|^{2}+|\varepsilon_3|^{2}+|\varepsilon_4|^{2}\right) \,dx.
				\end{aligned}
\]
    Thus, our above considerations reveal that 
\[
				\begin{aligned}
					&\langle \mathcal{H}_{loc}\vec{\varepsilon}, \vec{\varepsilon}\rangle \\&\geqslant K \int_{\R}\left(\sum_{k=1}^{N} \phi_{k}\right)\left(\left|\partial_{x} \varepsilon_1\right|^{2}+|\varepsilon_1|^{2}+|\varepsilon_2|^{2}+|\varepsilon_3|^{2}+|\varepsilon_4|^{2}\right)\,dx\\
     &\quad-C e^{-L /(4 B)} \int_{\R}\left(\left|\partial_{x} \varepsilon_1\right|^{2}+|\varepsilon_1|^{2}+|\varepsilon_2|^{2}+|\varepsilon_3|^{2}+|\varepsilon_4|^{2}\right)\,dx  
				\end{aligned}
\]
				
				and since $\sum_{k=1}^{N} \phi_{k}(t) \equiv 1$, we obtain the result by taking $L$ large enough.
			\end{proof}
We now propose a new way to write the previous lemma.
\begin{lemma}\label{newS}
    There exists \(T_{0}\) such that if \(t_{0} > T_{0}\), then for all \(t \in [t_{0}, T^{n}]\),
$$
\mathcal{S}( \Vec{u}(t), t) = \mathcal{S}\left( \Vec{R}(t), t \right) + \mathcal{H}_{loc}( \vec{\varepsilon}(t), t) +\sum_{j=1}^{N}\mathcal{O}\left(|\tilde{\omega}_{j} -\omega_{j}|^{2}\right)+ O\left(e^{-3  \sqrt{\omega_{\star}} c_{\star} t}\right).
$$
\end{lemma}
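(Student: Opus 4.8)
The plan is to read off the statement from Lemma~\ref{taylorS} by trading the diagonal quantity $\sum_{j=1}^{N}\mathcal{S}_{j,loc}(\vec{\tilde{R}}_j(t),t)$ that appears there for $\mathcal{S}(\vec{R}(t),t)$, the discrepancy producing exactly the two announced error terms. Since $\mathcal{S}(\vec{R})=\sum_{j=1}^{N}\mathcal{S}_{j,loc}(\vec{R})$ by \eqref{definitionS}, the whole matter reduces to establishing, for each fixed $j$,
\begin{equation*}
\mathcal{S}_{j,loc}(\vec{\tilde{R}}_j(t),t)-\mathcal{S}_{j,loc}(\vec{R}(t),t)=\mathcal{O}\!\left(|\tilde{\omega}_j-\omega_j|^2\right)+O\!\left(e^{-3\sqrt{\omega_\star}c_\star t}\right),
\end{equation*}
and then summing over $j$ and invoking Lemma~\ref{taylorS}.

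First I would strip off the localization and the soliton interactions. Writing $\mathcal{T}_j(\vec{w}):=E(\vec{w})-c_jQ_1(\vec{w})-\tilde{\omega}_jQ_2(\vec{w})$ for the \emph{non}-localized functional carrying the modulated frequency $\tilde{\omega}_j$, I claim
\begin{equation*}
\mathcal{S}_{j,loc}(\vec{\tilde{R}}_j,t)=\mathcal{T}_j(\vec{\tilde{R}}_j)+O\!\left(e^{-3\sqrt{\omega_\star}c_\star t}\right),\qquad \mathcal{S}_{j,loc}(\vec{R},t)=\mathcal{T}_j(\vec{R}_j)+O\!\left(e^{-3\sqrt{\omega_\star}c_\star t}\right).
\end{equation*}
Both follow from the definition \eqref{definitionSj} together with Lemma~\ref{solitons}: on the core of the $j$-th soliton one has $\phi_j\equiv1$, and replacing $\phi_j$ by $1$, discarding the components $\vec{R}_k$ with $k\neq j$, and dropping all mixed products $\vec{R}_k\vec{R}_l$ generates only terms of the three types controlled in Lemma~\ref{solitons}, each of size $e^{-4\sqrt{\omega_\star}c_\star t}$, hence absorbed into $O(e^{-3\sqrt{\omega_\star}c_\star t})$. (The bounds of Lemma~\ref{solitons}, stated for the modulated waves, transfer to the $\vec{R}_k$ via \eqref{decay}, Proposition~\ref{decaimentoquadratico} and \eqref{segundoboostrap}, since $|\tilde{x}_k-x_k|$ and $|\tilde{\omega}_k-\omega_k|$ are exponentially small.) It thus remains to compare $\mathcal{T}_j(\vec{\tilde{R}}_j)$ with $\mathcal{T}_j(\vec{R}_j)$.

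The decisive point is an invariance argument. The densities defining $E$, $Q_1$ and $Q_2$ are invariant under spatial translations and under the phase rotation $(u,\rho,v,n)\mapsto(e^{i\vartheta}u,e^{i\vartheta}\rho,v,n)$, hence so is $\mathcal{T}_j$. Let $\vec{P}_j^{\,\omega}$ denote the profile carrying the frozen boost $\theta_j$ and frequency $\omega$, placed at a fixed reference position and phase; then both $\vec{\tilde{R}}_j$ and $\vec{R}_j$ are translates-and-phase-rotates of $\vec{P}_j^{\,\tilde{\omega}_j}$ and $\vec{P}_j^{\,\omega_j}$, respectively, so that $G_j(\omega):=\mathcal{T}_j(\vec{P}_j^{\,\omega})$ is a smooth function of $\omega$ \emph{alone} with $\mathcal{T}_j(\vec{\tilde{R}}_j)=G_j(\tilde{\omega}_j)$ and $\mathcal{T}_j(\vec{R}_j)=G_j(\omega_j)$. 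This is precisely where the translation and phase directions drop out, so that no $|\tilde{x}_j-x_j|^2$ or $|\tilde{\gamma}_j-\gamma_j|^2$ contributions survive; unlike the quadratic term we keep, these could not in general be absorbed into $O(e^{-3\sqrt{\omega_\star}c_\star t})$, since by \eqref{segundoboostrap} they are only $O(e^{-2\omega_\star^{3/2}t})$ and $2\omega_\star^{3/2}\geq3\sqrt{\omega_\star}\,c_\star$ need not hold. Now $\vec{\tilde{R}}_j$ is a critical point of $\mathcal{T}_j$, i.e. $\mathcal{T}_j'(\vec{\tilde{R}}_j)=E'(\vec{\tilde{R}}_j)-c_jQ_1'(\vec{\tilde{R}}_j)-\tilde{\omega}_jQ_2'(\vec{\tilde{R}}_j)=0$ as recorded in the proof of Lemma~\ref{taylorS}; differentiating $G_j(\omega)=\mathcal{T}_j(\vec{P}_j^{\,\omega})$ and using this criticality (covariant under the symmetries) gives $G_j'(\tilde{\omega}_j)=0$. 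A Taylor expansion of $G_j$ about $\tilde{\omega}_j$, with $G_j''$ bounded on the fixed neighbourhood of $\omega_j$ in which $\tilde{\omega}_j$ ranges, then yields $G_j(\omega_j)-G_j(\tilde{\omega}_j)=\mathcal{O}(|\tilde{\omega}_j-\omega_j|^2)$, which is the required per-$j$ estimate.

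Summing the per-$j$ estimates and substituting into Lemma~\ref{taylorS} produces
\begin{equation*}
\mathcal{S}(\vec{u}(t),t)=\mathcal{S}(\vec{R}(t),t)+\mathcal{H}_{loc}(\vec{\varepsilon}(t),t)+\sum_{j=1}^{N}\mathcal{O}\!\left(|\tilde{\omega}_j-\omega_j|^2\right)+O\!\left(e^{-3\sqrt{\omega_\star}c_\star t}\right),
\end{equation*}
as claimed. I expect the main difficulty to be organizational rather than conceptual: the localization step forces one to expand each of $E$, $Q_1$, $Q_2$ into many diagonal and cross terms and match them all against the three estimates of Lemma~\ref{solitons}. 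The genuinely delicate idea is the invariance reduction of the previous paragraph, which is what guarantees that, among the quadratic modulation errors, only the frequency contribution $|\tilde{\omega}_j-\omega_j|^2$ remains, precisely the form demanded by the statement.
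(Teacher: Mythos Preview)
Your proposal is correct and follows essentially the same route as the paper: both start from Lemma~\ref{taylorS}, strip localization and soliton interactions via Lemma~\ref{solitons}, and then compare the single-soliton action functionals by Taylor expansion, using invariance under spatial translations and phase rotations to eliminate the $\tilde{x}_j$, $\tilde{\gamma}_j$ dependence so that only the quadratic $|\tilde{\omega}_j-\omega_j|^2$ term survives.

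The one minor difference is the choice of base point for the Taylor expansion. The paper introduces $\mathcal{J}_j(z)=E_j(z)-\omega_jQ_{2,j}(z)-c_jQ_{1,j}(z)$, carrying the \emph{unmodulated} frequency $\omega_j$, notes that $\vec{R}_j$ is a critical point of $\mathcal{J}_j$, and expands $\mathcal{J}_j(\vec{\tilde{R}}_j)$ about $\vec{R}_j$. You instead keep the modulated frequency in $\mathcal{T}_j$, use that $\vec{\tilde{R}}_j$ is critical for $\mathcal{T}_j$, and expand $G_j(\omega)=\mathcal{T}_j(\vec{P}_j^{\,\omega})$ about $\tilde{\omega}_j$. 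Both choices give $G_j'=0$ at the base point by criticality, and the second-order remainder is $\mathcal{O}(|\tilde{\omega}_j-\omega_j|^2)$ either way. Your version has the advantage of making the invariance reduction to a one-variable function of $\omega$ fully explicit, which the paper leaves implicit in its Taylor expansion (it writes the expansion directly in powers of $\tilde{\omega}_j-\omega_j$ without spelling out why no $\tilde{x}_j$, $\tilde{\gamma}_j$ terms appear). There is also a small bookkeeping discrepancy---$\mathcal{S}_{j,loc}$ carries $\tilde{\omega}_j$ while $\mathcal{J}_j$ carries $\omega_j$---which the paper glosses over but which is harmless, since the difference $(\tilde{\omega}_j-\omega_j)[Q_{2,j}(\vec{\tilde{R}}_j)-Q_{2,j}(\vec{R}_j)]$ is itself $\mathcal{O}(|\tilde{\omega}_j-\omega_j|^2)+O(e^{-3\sqrt{\omega_\star}c_\star t})$; your formulation avoids this issue entirely by keeping $\tilde{\omega}_j$ throughout.
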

\begin{proof}
    Let us $t \in [t_{0}, T^{n}].$ To prove this result, from the previous lemma, it would suffice to show that
    $$\mathcal{S}\left( \Vec{\tilde{R}}(t), t \right)=\sum_{j=1}^{N}\mathcal{S}_{j,loc}\left( \Vec{\tilde{R}}_j(t), t \right)+ O\left(e^{-3  \sqrt{\omega_{\star}} c_{\star} t}\right).$$
    This follows as in the previous lemma. In fact, let us show some steps with the components of $S$,
\begin{equation}\label{enesoma}
\begin{aligned}
     E\left(\Vec{\tilde{R}}(t),t\right)  
    & = \int_\rr\left|\sum_{s=1}^{N} \partial_{x} \tilde{R}_{s}^{(1)}\right|^{2} \,dx + \int_\rr\left|\sum_{k=1}^{N}  \tilde{R}_{k}^{(2)}\right|^{2} \,dx+ \frac{\alpha}{2}\int_\rr\left|\sum_{k=1}^{N}  \tilde{R}_{k}^{(4)}\right|^{2} \,dx \\
    & \quad + \frac{\beta}{2} \int_\rr\left|\sum_{k=1}^{N}  \tilde{R}_{k}^{(1)}\right|^{4} \,dx + \alpha\Re \int_\rr\left(\sum_{k=1}^{N} \tilde{R}_{k}^{(4)}\right)\left(\sum_{s=1}^{N} \bar{\tilde{R}}_{s}^{(1)}\right)^{2} \,dx\\
    &\quad + \frac{\alpha}{2}\int_\rr\left|\sum_{k=1}^{N}  \tilde{R}_{k}^{(3)}\right|^{2} \,dx++ \int_\rr\left|\sum_{k=1}^{N}  \tilde{R}_{k}^{(1)}\right|^{2} \,dx
\end{aligned}
\end{equation}
We will expand the terms of the above integrals, 
$$
\begin{aligned}
 \left|\sum_{s=1}^{N} \partial_{x} \tilde{R}_{s}^{(1)}\right|^{2}&=\left(\partial_{x} \tilde{R}_{1}^{(1)}+\cdots+\partial_{x} \tilde{R}_{N}^{(1)}\right)\left(\partial_{x} \bar{R}_{1}^{(1)}+\cdots+\partial_{x} \bar{R}_{N}^{(1)}\right),
\end{aligned}
$$
we have
\begin{equation}\label{somagradiente}
\begin{aligned}
 \left|\sum_{s=1}^{N} \partial_{x} \tilde{R}_{s}^{(1)}\right|^{2}&=\sum_{s=1}^{N}\left|\partial_{x} \tilde{R}_{s}^{(1)}\right|^{2}+\sum_{\substack{s, m=1 \\ s \neq m}}^{N} \partial_{x} \tilde{R}_{s}^{(1)} \partial_{x} \bar{\tilde{R}}_{j}^{(1)} \\&= \sum_{s=1}^{N}\left|\partial_{x} \tilde{R}_{s}^{(1)}\right|^{2}\phi_{s}+\sum_{s=1}^{N}\left|\partial_{x} \tilde{R}_{s}^{(1)}\right|^{2}\sum_{\substack{k=1 \\ k\neq s}}^{N}\phi_{k}+\sum_{\substack{s, m=1 \\ s \neq m}}^{N} \partial_{x} \tilde{R}_{s}^{(1)} \partial_{x} \bar{\tilde{R}}_{j}^{(1)}  
\end{aligned}
\end{equation}

Now, using the fact that the solitons are bounded (see Proposition \ref{decaimentoquadratico}) and Lemma \ref{solitons}, we obtain
\begin{equation*}\label{somadeorden4}
\begin{aligned}
&\sum_{s=1}^{N}\int_{\R}\left|\partial_{x} \tilde{R}_{s}^{(1)}\right|^{2}\sum_{\substack{k=1 \\ k\neq s}}^{N}\phi_{k}\,dx+\sum_{\substack{s, m=1 \\ s \neq m}}^{N} \int_{\mathbb{R}} \partial_{x} \tilde{R}_{s}^{(1)} \partial_{x} \bar{\tilde{R}}_{j}^{(1)}\phi_{j}\,dx\\
&\qquad\leq C\sum_{\substack{s=1 \\ s \neq j}}^{N}(1+|\theta_{s}|)e^{-4\sqrt{\omega_{\star}}c_{\star}t}+C\sum_{\substack{s, m=1 \\ s \neq m}}^{N}(1+|\theta_{s}|+|\theta_{m}|)^{2}e^{-4\sqrt{\omega_{\star}}c_{\star}t}\\
&\qquad\leq C \max_{s\neq m}\{(1+|\theta_{s}|+|\theta_{m}|)^{2}\}e^{-4\sqrt{\omega_{\star}}c_{\star}t}.
\end{aligned}
\end{equation*}
Taking $T_{0}$ sufficiently large such that 
\begin{equation}\label{T_{0}3}
\max_{s\neq m}\{(1+|\theta_{s}|+|\theta_{m}|)^{2}\}e^{-4\sqrt{\omega_{\star}}c_{\star}t}\leq e^{-3\sqrt{\omega_{\star}}c_{\star}t},  
\end{equation} 
we have that
\begin{equation}\label{resulsoma}
\begin{aligned}
\sum_{s=1}^{N}\int_{\R}\left|\partial_{x} \tilde{R}_{s}^{(1)}\right|^{2}\sum_{\substack{k=1 \\ k\neq s}}^{N}\phi_{k}\,dx+\sum_{\substack{s, m=1 \\ s \neq m}}^{N} \int_{\mathbb{R}} \partial_{x} \tilde{R}_{s}^{(1)} \partial_{x} \bar{\tilde{R}}_{j}^{(1)}\phi_{j}\,dx\leq Ce^{-3\sqrt{\omega_{\star}}c_{\star}t},
\end{aligned}
\end{equation}
from where $$\int_\rr\left|\sum_{s=1}^{N} \partial_{x} \tilde{R}_{s}^{(1)}\right|^{2} \,dx=\sum_{j=1}^{N}\int_\rr\left| \partial_{x} \tilde{R}_{j}^{(1)}\right|^{2} \phi_j\,dx+Ce^{-3\sqrt{\omega_{\star}}c_{\star}t}.$$
Thus, following the same process, we can write that
\begin{equation*}
\begin{aligned}
E\left(\Vec{\tilde{R}}(t),t\right)
& =\sum_{j=1}^{N}\bigg(\int_{\mathbb{R}}\left|\partial_{x} \tilde{R}_{j}^{(1)}\right|^{2} \phi_{j}\,dx+ \int\left|\partial_{x} \tilde{R}_{j}^{(1)}\right|^{2} \phi_{j} \,dx+ \int\left|\partial_{x} \tilde{R}_{j}^{(2)}\right|^{2} \phi_{j} \,dx \\
&\quad+ \frac{\alpha}{2}\int\left| \tilde{R}_{j}^{(3)}\right|^{2} \phi_{j} \,dx+ \frac{\alpha}{2}\int\left| \tilde{R}_{j}^{(4)}\right|^{2} \phi_{j} \,dx+\alpha \Re \int_{\mathbb{R}} \tilde{R}_{j}^{(3)}\bar{\tilde{R}}_{j}^{(1)} \phi_{j}\,dx\\
&\quad+ \frac{\beta}{2}\int\left| \tilde{R}_{j}^{(1)}\right|^{4} \phi_{j} \,dx\bigg)+\mathcal{O}\left(e^{-3\sqrt{\omega_{\star}}c_{\star}t}\right).
\end{aligned}  
\end{equation*}
Similarly, we can analyze 
$$
\begin{aligned}
Q(\Vec{\tilde{R}}(t),t) 
=2\Re\int_{\mathbb{R}} \left(\sum_{s=1}^{N} \partial_{x}\tilde{R}_{s}^{(1)}\right)\overline{\left(\sum_{s=1}^{N} \tilde{R}_{s}^{(2)}\right)}\,dx-\alpha \int_{\mathbb{R}}\left(\sum_{k=1}^{N} \tilde{R}_{k}^{(3)}\right)\left(\sum_{k=1}^{N} \tilde{R}_{k}^{(4)}\right) \,dx.
\end{aligned}
$$
Using the same argument applied in the estimate for \(E\), we have 
\begin{equation*}
\begin{aligned}
Q_1(\Vec{\tilde{R}}(t),t) =\sum_{j=1}^{N}\bigg(2\Re\int_{\mathbb{R}}\tilde{R}_{j}^{(1)}\overline{\tilde{R}_{j}^{(2)}} \phi_{j}\,dx-\alpha \int_{\mathbb{R}}\tilde{R}_{j}^{(3)}\tilde{R}_{j}^{(4)} \phi_{j}\,dx\bigg)+O\left(e^{-3 \sqrt{\omega_{\star}} c_{\star} t}\right).
\end{aligned}    
\end{equation*}
Finally, proceeding similarly, we obtain that 
\begin{equation*}\label{m1}
 \begin{aligned}
   Q_2(\Vec{\tilde{R}}(t),t) =\sum_{j=1}^{N}\Im\int_{\mathbb{R}}2\tilde{R}_{j}^{(2)}\overline{\tilde{R}_{j}^{(1)}} \phi_{j}\,dx+O\left(e^{-3 \sqrt{\omega_{\star}} c_{\star} t}\right).
\end{aligned}   
\end{equation*}
It follows that, $$\mathcal{S}\left( \Vec{\tilde{R}}(t), t \right)=\sum_{j=1}^{N}\mathcal{S}_{j,loc}\left( \Vec{\tilde{R}}_j(t), t \right)+ O\left(e^{-3  \sqrt{\omega_{\star}} c_{\star} t}\right).$$
Moreover, to conclude the result, we define the operator for $j$ fixed, 
$$\mathcal{J}_j(z)=E_j(z)-\omega_jQ_{2,j}(z)-c_jQ_{1,j}(z).$$
We know that $\vec{R}_j$ is a critical point of $\mathcal{J}_j$. Thus, by applying the Taylor expansion formula,
\begin{equation*}
\begin{aligned}
\mathcal{J}_j(\vec{\tilde{R}}_j)=\mathcal{J}_j(\vec{R}_j)+\frac{1}{2}\mathcal{J}_{j}^{\prime \prime}(\vec{R}_j)(\tilde{\omega}_{j}-\omega_{j})^{2}+|\tilde{\omega}_{j}-\omega_{j}|^{2}o\left(|\tilde{\omega}_{j}-\omega_{j}|\right).
\end{aligned}
\end{equation*}
from which we deduce that the expression \eqref{formSfinal} can be written as
$$
\mathcal{S}( \Vec{u}(t), t) = \mathcal{S}\left( \Vec{R}(t), t \right) + \mathcal{H}_{loc}( \vec{\varepsilon}(t), t) +\sum_{j=1}^{N}\mathcal{O}\left(|\tilde{\omega}_{j} -\omega_{j}|^{2}\right)+ O\left(e^{-3  \sqrt{\omega_{\star}} c_{\star} t}\right).
$$
\end{proof}
Now we will establish a small lemma that provides us with some control over the variation between a parameter and its modulation.
\begin{lemma}\label{variationomega}
    If $t_{0} >T_{0}$, there exists $C >0$ independent of $n$ and $c_{\star}$ such that, for all $t \in [t_{0},T^{n}]$ and for all $j \in \{1,2,\ldots,N\}$
    $$|\tilde{\omega}_j(t)-\omega_j|\leq C e^{-2 \sqrt{ \omega_{\star}}c_{\star}t}.$$
\end{lemma}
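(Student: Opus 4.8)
The plan is to show that the modulation rate of the \emph{frequency} parameter is quadratically small in $\vec\varepsilon$, namely $|\partial_t\tilde\omega_j(t)|\lesssim\|\vec\varepsilon(t)\|_X^2+e^{-3\sqrt{\omega_\star}c_\star t}$, which is a genuine gain over the linear bound \eqref{modulated} valid for all three parameters, and then to integrate this backward in time. The structural reason for the gain is the exact conservation of the charge $Q_2$ from \eqref{mom2}, whose variational direction is precisely the one singled out by the orthogonality against $\vec{\tilde\Psi}$ in \eqref{orthogonality}. Before anything, I would record the boundary datum: since $\vec u(T^n)=\vec R(T^n)$ and the modulation of Lemma \ref{lemamodulation} recovers the reference parameters at the final time, we have $\tilde\omega_j(T^n)=\omega_j$ for every $j$.

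For the refined estimate I would revisit the modulation identities obtained by differentiating \eqref{orthogonality} in time, focusing on the equation attached to the direction $\vec{\tilde\Psi}$, which is the one governing $\tilde\omega_j$. Two features conspire to make its right-hand side quadratic. First, because $\vec{\tilde\Psi}_j$ is transported in the exact soliton frame, the choices $s_j=\omega_j/(1-c_j^2)$ and $\theta_j=\omega_jc_j/(1-c_j^2)$ make the frame part of $\partial_t\vec{\tilde\Psi}_j$ cancel, so that $\partial_t\vec{\tilde\Psi}_j=O(|\vec X_j|)$ with $\vec X_j=(\partial_t\tilde\gamma_j-s_j,\partial_t\tilde x_j+c_j,\partial_t\tilde\omega_j)$, turning $(\varepsilon_1,\partial_t\tilde\Psi_j)_{L^2}$ into a quadratic quantity $O(|\vec X_j|\,\|\vec\varepsilon\|_X)$. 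Second, when $\partial_t\varepsilon_1$ is replaced using the evolution \eqref{M1}, the resulting linear term $\langle JE''(\vec{\tilde R})\vec\varepsilon,\vec{\tilde\Psi}\rangle$ cancels: this is the algebraic reflection of the conservation of $Q_2$ along the $\omega$-direction selected by $\vec{\tilde\Psi}$, in contrast with the translation and phase equations, whose linear source survives and yields the merely linear bound \eqref{modulated}. Solving the invertible modulation system of Lemma \ref{lemamodulation} and absorbing the $O(|\vec X_j|\,\|\vec\varepsilon\|_X)$ contribution into the left-hand side (legitimate since $\|\vec\varepsilon\|_X$ is small for $t$ large) then gives $|\partial_t\tilde\omega_j(t)|\lesssim\|\vec\varepsilon(t)\|_X^2+e^{-3\sqrt{\omega_\star}c_\star t}$, the soliton interactions being controlled by Lemma \ref{solitons}.

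Finally I would integrate this quadratic bound on $[t,T^n]$ using $\tilde\omega_j(T^n)=\omega_j$, so that
\[
|\tilde\omega_j(t)-\omega_j|\le\int_t^{T^n}|\partial_s\tilde\omega_j(s)|\,ds\lesssim\int_t^{+\infty}\bigl(\|\vec\varepsilon(s)\|_X^2+e^{-3\sqrt{\omega_\star}c_\star s}\bigr)\,ds,
\]
and inserting the bootstrap bound $\|\vec\varepsilon(s)\|_X\lesssim e^{-\sqrt{\omega_\star}c_\star s}$ coming from \eqref{desiesti} gives $\|\vec\varepsilon(s)\|_X^2\lesssim e^{-2\sqrt{\omega_\star}c_\star s}$, whence $|\tilde\omega_j(t)-\omega_j|\le Ce^{-2\sqrt{\omega_\star}c_\star t}$ with $C$ independent of $n$ and of $c_\star$. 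The main obstacle is exactly the second step: one must verify rigorously that the linear-in-$\vec\varepsilon$ source of the $\tilde\omega_j$-equation, present for the translation and phase parameters, is \emph{absent} for the frequency, i.e.\ that $\langle JE''(\vec{\tilde R})\vec\varepsilon,\vec{\tilde\Psi}\rangle$ vanishes by virtue of the conservation of $Q_2$ and the choice of the orthogonality direction $\vec{\tilde\Psi}$. Once this cancellation is secured, the backward integration and the use of \eqref{desiesti} are routine.
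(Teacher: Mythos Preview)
Your overall strategy---show $|\partial_t\tilde\omega_j|\lesssim\|\vec\varepsilon\|_X^2+e^{-3\sqrt{\omega_\star}c_\star t}$ and integrate backward from $T^n$---is different from what the paper does, but it breaks down at the key cancellation step.

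The claim that $\langle JE''(\vec{\tilde R})\vec\varepsilon,\vec{\tilde\Psi}\rangle$ vanishes is not correct. Since $\vec{\tilde\Psi}=\sum_j(\tilde\Psi_j,0,0,0)$ has only its first component nonzero, and since by \eqref{energy2} the second row of $E''(\vec u)$ acts as $\eta\mapsto 2\eta_2$, one has $(JE''(\vec{\tilde R})\vec\varepsilon)^{(1)}=-\tfrac12\cdot 2\varepsilon_2=-\varepsilon_2$, so
\[
\langle JE''(\vec{\tilde R})\vec\varepsilon,\vec{\tilde\Psi}\rangle=-\sum_j(\varepsilon_2,\tilde\Psi_j)_{L^2},
\]
which is generically $O(\|\vec\varepsilon\|_X)$: the orthogonality conditions \eqref{orthogonality} constrain only $\varepsilon_1$, never $\varepsilon_2$. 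Your heuristic that $\vec{\tilde\Psi}$ is ``the variational direction of $Q_2$'' is a misidentification. Here $\tilde\Psi_j=e^{i\tilde\lambda_j}\Psi_{\omega_j}(\cdot-c_jt-\tilde x_j)$ is built from the \emph{negative eigenfunction} $\Psi_{\omega_j}$ of $L_j^1$, chosen precisely so that the coercivity of Lemma~\ref{coercivity} holds under \eqref{orthogonality}; the direction generated by $Q_2$ is instead $Q_2'(\vec{\tilde R}_j)=B_2\vec{\tilde R}_j=(-2i\tilde R_j^{(2)},2i\tilde R_j^{(1)},0,0)$, a different object with nontrivial second component. Thus the $\tilde\omega_j$-equation obtained by differentiating $(\vec\varepsilon,\vec{\tilde\Psi})=0$ retains a linear-in-$\vec\varepsilon$ source, and the refined quadratic bound on $\partial_t\tilde\omega_j$ does not follow by this route; indeed the paper only records the linear bound \eqref{modulated}.

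The paper avoids this issue by working with the charge $Q_2$ directly rather than through the modulation ODEs. Since $Q_2(\vec{\tilde R}_j)$ is a function of $\tilde\omega_j$ alone (by translation and phase invariance of $Q_2$), comparing $Q_2(\vec{\tilde R}_j(t))$ with $Q_2(\vec{\tilde R}_j(T^n))=Q_2(\vec R_j(T^n))$ via the conservation of $Q_2$ along the flow and the final condition $\vec u(T^n)=\vec R(T^n)$, and then Taylor-expanding $\omega\mapsto\int|\Phi_\omega|^2$, extracts $|\tilde\omega_j(t)-\omega_j|$ without any pointwise-in-time quadratic control of $\partial_t\tilde\omega_j$.
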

\begin{proof}
    Let $t \in [t_{0},T^{n}]$ and  $j \in \{1,2,\ldots,N\}$ be fixed.  

    We know that, $$R_{j}^{(2)}=e^{i(\theta_j(x-c_jt)-\omega_jt+\gamma_j)}(i\omega_j+c_j\partial_{x})\Phi_{\omega_j}^{(2)}$$
    where $\theta_j=\frac{\omega_j c_j}{1-c_j^2}.$

    Hence, \begin{equation}\label{eq1}
        Q_{2}(\vec{R}_j)=2\Im \int_{\R}\overline{R}_{j}^{(1)} R_{j}^{(2)}\,dx=-2\omega_j\int_{\R}|\Phi_{\omega_j}|^2\,dx.
    \end{equation}
Now, since $Q_2$ is a quantity that conserves the flow of the system \eqref{system2}, then  \begin{equation}\label{eq2}
    \left|Q_{2}(\vec{\tilde{R_j}}(t))-Q_{2}(\vec{\tilde{R_j}}(T^{n}))\right|\leq Ce^{-2 \sqrt{ \omega_{\star}}c_{\star}t}.
\end{equation}
Then, since $$Q_{2}(\vec{\tilde{R_j}}(T^{n}))=Q_{2}(\vec{R_j}(T^{n})),$$
so, by Taylor's expansion,
\begin{equation}\label{eq3}
\begin{aligned}
&\int_{R}\left|\Phi_{\omega_{1}}(x)\right|^{2} d x -\int_{\mathbb{R}}\left|\Phi_{\tilde{\omega}_{1}(t)}(x)\right|^{2} d x\\
&=\left.\int_{\mathbb{R}}\left|\Phi_{\tilde{\omega}_{1}}(x)\right|^{2} d x+\left.\left(\partial_{\omega} \int\left|\Phi_{\tilde{\omega}}(x)\right|^{2}\right)\right|_{\omega=\tilde{\omega}_{j}}\right)\left( \tilde{\omega}_{j}(t)- \omega_{j}\right) \\
&\quad+\left(\tilde{\omega}_{j}(t)-\omega_{j}\right) o\left(\left|\tilde{\omega}_{j}(t)-\omega_{j}\right|^{2}\right) -\int_{\mathbb{R}}\left|\Phi_{\tilde{\omega}_{1}}(x)\right|^{2} d x \\
&=\left.\left(\partial_{\omega} \int_{\mathbb{R}}\left|\Phi_{\tilde{\omega}}(x)\right|^{2} d x\right|_{\omega=\tilde{\omega}_{j}}\right)\left(\tilde{\omega}_{j}(t)-\omega_{j}\right) +\left(\tilde{\omega}_{j}(t)-\omega_{j}\right) o\left(\left|\tilde{\omega}_{j}(t)-\omega_{j}\right|^{2}\right).
\end{aligned}
\end{equation}
Therefore, combining \eqref{eq1}, \eqref{eq2} and \eqref{eq3} we have 
 $$|\tilde{\omega}_j(t)-\omega_j|^2\leq C e^{-4 \sqrt{ \omega_{\star}}c_{\star}t}.$$
\end{proof}
Before showing Lemma \ref{Bootstrap3}, we must establish one last result that provides an almost conservation law for localized momentum 1  and momentum 2.
\begin{lemma}\label{VariacionS}
    If $t_{0} >T_{0}$, there exists $C >0$ independent of $n$ and $c_{\star}$ such that, for all $t \in [t_{0},T^{n}]$,
$$
\left|\frac{\partial \mathcal{S}(t, \Vec{u}(t))}{\partial t}\right| \leqslant \frac{C}{\sqrt{t}} e^{-2 \sqrt{ \omega_{\star}}c_{\star}t}
$$
\end{lemma}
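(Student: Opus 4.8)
The plan is to exploit that $E$, $Q_1$, $Q_2$ obey \emph{local} conservation laws along the flow \eqref{system2}, so that the only genuine time dependence of $\mathcal{S}(t,\vec{u}(t))=\sum_{j=1}^N\mathcal{S}_{j,loc}(\vec{u}(t))$ is carried by the cutoffs $\phi_j(x,t)$ and by the modulated frequencies $\tilde{\omega}_j(t)$. Denoting by $e,q_1,q_2$ the densities of $E,Q_1,Q_2$ and by $f_E,g_1,g_2$ their fluxes (so that $\partial_t e=-\partial_x f_E$ and $\partial_t q_i=-\partial_x g_i$ on solutions), the chain rule together with an integration by parts in the flow part gives
\[
\frac{d}{dt}\mathcal{S}(t,\vec{u}(t))=\sum_{j=1}^N\int_{\R}\left(f_E-c_jg_1-\tilde{\omega}_jg_2\right)\partial_x\phi_j\,dx+\sum_{j=1}^N\int_{\R}\left(e-c_jq_1-\tilde{\omega}_jq_2\right)\partial_t\phi_j\,dx-\sum_{j=1}^N(\partial_t\tilde{\omega}_j)\,Q_{2,j}(\vec{u}).
\]
The first structural fact I would record is the source of the $t^{-1/2}$ gain: by construction $\partial_x\phi_j=\tfrac{1}{\sqrt t}\,\psi'(\tfrac{x-m_jt}{\sqrt t})$, and the leading part of $\partial_t\phi_j$ is $-\tfrac{m_j}{\sqrt t}\,\psi'(\tfrac{x-m_jt}{\sqrt t})$ up to a remainder of size $O(t^{-1})$, so that $\|\partial_x\phi_j\|_\infty+\|\partial_t\phi_j\|_\infty\lesssim t^{-1/2}$, with both supported in the transition window $\{\,|x-m_jt|\le\sqrt t\,\}$.

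Next I would estimate the two integral sums, which constitute the robust part. Writing $\vec{u}=\vec{\tilde R}+\vec{\varepsilon}$ and expanding the density $e-c_jq_1-\tilde{\omega}_jq_2$ (a polynomial of degree $\le 4$ in $\vec{u}$) into purely-soliton, mixed, and purely-$\vec{\varepsilon}$ contributions, I treat each. On the window $\{\,|x-m_jt|\le\sqrt t\,\}$ each soliton $\tilde R_k$ is centred at distance $\ge\tfrac{c_\star}{2}t-\sqrt t$ from $m_jt$, so by \eqref{decay} and Lemma \ref{solitons} the purely-soliton and mixed terms are $O(e^{-4\sqrt{\omega_\star}c_\star t})$, negligible against the target. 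For the purely-$\vec{\varepsilon}$ terms I bound, for instance, $\big|\int_{\R}|\vec{\varepsilon}|^2\partial_t\phi_j\,dx\big|\le\|\partial_t\phi_j\|_\infty\|\vec{\varepsilon}\|_X^2\lesssim t^{-1/2}\|\vec{\varepsilon}\|_X^2$, the quartic term being absorbed via Sobolev embedding and the smallness of $\|\vec{\varepsilon}\|_X$. Since the bootstrap hypothesis \eqref{desiesti} together with Lemma \ref{lemamodulation} gives $\|\vec{\varepsilon}\|_X\le Ce^{-\sqrt{\omega_\star}c_\star t}$, these contributions are $\lesssim t^{-1/2}e^{-2\sqrt{\omega_\star}c_\star t}$, exactly the asserted rate; squaring the first-order bound on $\vec{\varepsilon}$ is what is decisive here.

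It remains to control $-\sum_j(\partial_t\tilde{\omega}_j)Q_{2,j}(\vec{u})$, which I expect to be the main obstacle: here $Q_{2,j}(\vec{u})=Q_2(\vec{\tilde R}_j)+O(\|\vec{\varepsilon}\|_X)$ is of order one, whereas \eqref{modulated} only yields $|\partial_t\tilde{\omega}_j|\lesssim\|\vec{\varepsilon}\|_X+e^{-3\sqrt{\omega_\star}t}$, so a naive product is only $O(e^{-\sqrt{\omega_\star}c_\star t})$. The plan is to split $Q_{2,j}(\vec{u})=Q_{2,j}(\vec{\tilde R})+\langle Q_{2,j}'(\vec{\tilde R}),\vec{\varepsilon}\rangle+Q_{2,j}(\vec{\varepsilon})$: the last two pieces, multiplied by $\partial_t\tilde{\omega}_j$, are already $O(\|\vec{\varepsilon}\|_X^2)\lesssim e^{-2\sqrt{\omega_\star}c_\star t}$. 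For the surviving order-one piece I would exploit the exact identity $\sum_j\phi_j\equiv 1$, which forces $\sum_jQ_{2,j}(\vec{u})=Q_2(\vec{u})$, a conserved quantity, so that the $\tilde{\omega}_j$-weighted combination can be re-expressed through the modulation identities \eqref{M5}--\eqref{M7} and Lemma \ref{variationomega} (namely $|\tilde{\omega}_j-\omega_j|\le Ce^{-2\sqrt{\omega_\star}c_\star t}$); the net effect is that the leading contribution cancels and only second-order-in-$\vec{\varepsilon}$ and super-exponential remainders remain. Collecting the three groups of terms then yields $\big|\tfrac{\partial}{\partial t}\mathcal{S}(t,\vec{u}(t))\big|\le \tfrac{C}{\sqrt t}\,e^{-2\sqrt{\omega_\star}c_\star t}$. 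The delicate point throughout is that the frequency-derivative term be shown to be genuinely quadratic in the deviations rather than linear, which is the only place where the precise interplay between the orthogonality conditions and the almost-conservation of $Q_2$ is used.
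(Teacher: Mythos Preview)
Your overall strategy---exploiting that $E,Q_1,Q_2$ have local conservation laws so that the only time-dependence of $\mathcal{S}$ comes through $\partial_x\phi_j$, $\partial_t\phi_j$, and $\partial_t\tilde\omega_j$, and then using that the cutoff derivatives are supported on transition windows where the solitons are exponentially small---is precisely the paper's approach. The paper carries it out by directly differentiating $Q_{1,j}(\vec u)$ and $Q_{2,j}(\vec u)$ using \eqref{system2} (rather than through an abstract flux formulation), obtaining the $t^{-1/2}$ factor from $\psi_j'$ and then bounding the resulting integrals over $\Omega_j=[m_jt-\sqrt t,\,m_jt+\sqrt t]$ by $Ce^{-2\sqrt{\omega_\star}c_\star t}$ via soliton decay and the bootstrap hypothesis $\|\vec u-\vec R\|_X\le e^{-\sqrt{\omega_\star}c_\star t}$. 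So for the bulk of the argument the two routes coincide.

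Where you differ is in explicitly flagging the contribution $-\sum_j(\partial_t\tilde\omega_j)Q_{2,j}(\vec u)$. The paper simply does not address it: it writes that it suffices to study the variations of $Q_{1,j}$ and $Q_{2,j}$ and never differentiates the coefficients $\tilde\omega_j$. You are right that this is a genuine issue, since with only $|\partial_t\tilde\omega_j|\lesssim\|\vec\varepsilon\|_X$ from \eqref{modulated} and $Q_{2,j}(\vec u)=O(1)$ the pointwise bound is only $O(e^{-\sqrt{\omega_\star}c_\star t})$, weaker than the stated conclusion. However, your proposed remedy via $\sum_j\phi_j\equiv1$ (so $\sum_jQ_{2,j}=Q_2$) does not close the gap, because the sum in question carries the distinct weights $\partial_t\tilde\omega_j$ and no cancellation occurs. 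A clean fix, sufficient for the application in Proposition~\ref{Bootstrap3} where only $\int_t^{T^n}\partial_s\mathcal{S}\,ds$ is needed, is to write $(\partial_s\tilde\omega_j)Q_{2,j}=\partial_s\bigl[(\tilde\omega_j-\omega_j)Q_{2,j}\bigr]-(\tilde\omega_j-\omega_j)\,\partial_sQ_{2,j}$: the second piece is $O\bigl(e^{-2\sqrt{\omega_\star}c_\star s}\bigr)\cdot \tfrac{C}{\sqrt s}e^{-2\sqrt{\omega_\star}c_\star s}$ by Lemma~\ref{variationomega} and the estimate on $\partial_sQ_{2,j}$ already obtained, while the total derivative integrates to a boundary term of size $O(e^{-2\sqrt{\omega_\star}c_\star t})$ since $\tilde\omega_j(T^n)=\omega_j$. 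So the pointwise statement of the lemma is slightly too strong, but the integrated estimate actually used downstream goes through.
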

\begin{proof}
We observe that for all $t \in [t_{0},T^{n}]$,
$$
\mathcal{S} (\vec{w}(t),t)=E(\vec{w}(t))-\sum_{j=1}^{N} c_j Q_{1,j} (\vec{w}(t),t)-\tilde{\omega}_{j} Q_{2,j}( \vec{w}(t),t)
$$
Since the energy $E$ is conserved by the flow of \eqref{system2}, to estimate the variations of $\mathcal{S}(t, \Vec{u}(t))$ it is enough to study the variations of the localized momentums $Q_{2,j}( \Vec{u}(t),t)$ and  $Q_{1,j}( \Vec{u}(t),t)$. 

For $j=2, \ldots, N$, we have 

$$
\begin{aligned}
 &\frac{\partial}{\partial t}\int_{\R}Q_{1,j}(\Vec{u}(t))dx\\
&= 2 \frac{\partial}{\partial t} \Re\int_{\R} \partial_{x}u(t) \bar{\rho}(t) \psi_{j}(x,t)\, d x-\alpha\frac{\partial}{\partial t} \int_{\R} n(t) v(t) \psi_{j}(x,t) \,d x \\
& =-2\Re\int_{\R} \frac{\partial}{\partial t} u(t) \partial_{x}\bar{\rho}(t) \psi_{j}(x,t) \, d x+2\Re\int_{\R} \partial_{x}u(t)\frac{\partial}{\partial t}\bar{\rho}(t) \psi_{j}(x,t)  \,d x+2\Re\int_{\R} \partial_{x}u(t)\bar{\rho}(t)\frac{\partial}{\partial t} \psi_{j}(x,t)  \,d x \\
& \quad -\alpha \Re \int_{\R} \frac{\partial}{\partial t} n(t) v(t) \psi_{j}(x,t) \, d x-\alpha\int_{\R}v(t)n(t) \frac{\partial}{\partial t} \psi_{j}(x,t)\, d x -\alpha \Re \int_{\R} \frac{\partial}{\partial t} v(t) n(t) \psi_{j}(x,t) \, d x.
\end{aligned}$$
Now, we observe that 
\begin{equation}\label{Q0}
    \begin{aligned}
      2\Re\int_{\R} \partial_{x}u(t)\bar{\rho}(t)\frac{\partial}{\partial t} \psi_{j}(x,t)  \,d x &=2\Re\int_{\R} \partial_{x}u(t)\bar{\rho}(t)\left(\frac{x^{1}+m_{j} t}{2 t\sqrt{t}}\right)\psi_{j}^{\prime}(x,t)   \,d x,
    \end{aligned}
\end{equation}
so, 
\begin{equation}\label{Q00}
    \begin{aligned}
     \alpha\int_{\R}v(t)n(t) \frac{\partial}{\partial t} \psi_{j}(x,t)\, d x &=\alpha\int_{\R}v(t)n(t) \left(\frac{x^{1}+m_{j} t}{2 t\sqrt{t}}\right)\psi_{j}^{\prime}(x,t)\, d x
    \end{aligned}
\end{equation}
Similarly,
\begin{equation}\label{Q1}
    \begin{aligned}
        -2\Re\int_{\R} \frac{\partial}{\partial t} u(t) \partial_{x}\bar{\rho}(t) \psi_{j}(x,t) \, d x&=2\Re\int_{\R} \rho(t) \partial_{x}\bar{\rho}(t) \psi_{j}(x,t) \, d x \\
        &=2\Re\int_{\R} \frac{1}{2}\partial_{x}|\rho(t)|^2  \psi_{j}(x,t) \, d x\\
        &=-\frac{1}{\sqrt{t}}\Re\int_{\R} |\rho(t)|^2  \psi_{j}^{\prime}(x,t) \, d x,
    \end{aligned}
\end{equation}
\begin{equation}\label{Q2}
    \begin{aligned}
        2\Re\int_{\R} \partial_{x}u(t)\frac{\partial}{\partial t}\bar{\rho}(t) \psi_{j}(x,t)  \,d x&=2\Re\int_{\R} \partial_{x}u(t)\left(-\bar{u}_{xx}+\bar{u}+\al \bar{u}v+\beta|u|^2\bar{u}\right) \psi_{j}(x,t)  \,d x \\&=-2\Re\int_{\R} \partial_{x}u(t)\partial_{xx}\bar{u}(t)\psi_{j}(x,t)  \,d x+2\Re\int_{\R} \partial_{x}u(t)\bar{u}(t)\psi_{j}(x,t)  \,d x \\&\quad+2\alpha\Re\int_{\R} \partial_{x}u(t)\bar{u}(t)v(t)\psi_{j}(x,t)  \,d x+2\beta\Re\int_{\R} \partial_{x}u(t)|u|^2(t)\bar{u}(t)\psi_{j}(x,t)  \,d x\\
        &=\frac{1}{\sqrt{t}}\Re\int_{\R} |\partial_{x}u(t)|^2\psi_{j}^{\prime}(x,t)  \,d x-\frac{1}{\sqrt{t}}\Re\int_{\R} |u(t)|^2\psi_{j}^{\prime}(x,t)  \,d x\\
        & \quad -\frac{\alpha}{\sqrt{t}}\Re\int_{\R} |u(t)|^2v(t)\psi_{j}^{\prime}(x,t)  \,d x-\alpha\Re\int_{\R} |u(t)|^2 \partial_{x}v(t)\psi_{j}(x,t)  \,d x\\
        &\quad-\frac{\alpha}{\sqrt{t}}\Re\int_{\R} |u(t)|^2v(t)\psi_{j}^{\prime}(x,t)  \,d x-\frac{\beta}{2}\frac{1}{\sqrt{t}}\Re\int_{\R} |u(t)|^4\psi_{j}^{\prime}(x,t)  \,d x ,
    \end{aligned}
\end{equation}
\begin{equation}\label{Q3}
    \begin{aligned}
        -\alpha \Re \int_{\R} \frac{\partial}{\partial t} n(t) v(t) \psi_{j}(x,t) \, d x&=-\alpha \Re \int_{\R} \left(\partial_{x}(|u|^2)+\partial_{x}v\right) v(t) \psi_{j}(x,t) \, d x\\
        &= \frac{\alpha}{\sqrt{t}}\Re \int_{\R} \left(|u(t)|^2 v(t)+\frac{1}{2}|v(t)|^2 \right) \psi_{j}^{\prime}(x,t) \, d x\\
        &\quad+\alpha \Re \int_{\R} |u(t)|^2 \partial_{x}v(t) \psi_{j}(x,t) \, d x,
    \end{aligned}
\end{equation}
\begin{equation}\label{Q4}
    \begin{aligned}
        -\alpha \Re \int_{\R} \frac{\partial}{\partial t} v(t) n(t) \psi_{j}(x,t) \, d x&=-\alpha \Re \int_{\R} \partial_{x}n(t) n(t) \psi_{j}(x,t) \, d x\\
        &=\frac{\alpha}{2}\frac{1}{\sqrt{t}} \Re \int_{\R} |n(t)|^2  \psi_{j}^{\prime}(x,t) \, d x.
    \end{aligned}
\end{equation}
Then, combining \eqref{Q1}-\eqref{Q4}, we have $$\left|\frac{\partial}{\partial t}\int_{\R}Q_{1,j}(\Vec{u}(t))dx\right|\leq \frac{C}{\sqrt{t}}\int_{\R}\left(|u(t)|^2+|u(t)|^4+|\partial_{x}u(t)|^2+|n(t)|^2+|v(t)|^2+|\rho(t)|^2\right)\psi_{j}^{\prime}(x,t) \, d x.$$
Now, we proceed to estimate $Q_{1,j}$ for all $j$. In fact, 
$$\begin{aligned}
   &\frac{\partial}{\partial t}\int_{\R}Q_{2,j}(\Vec{u}(t))dx\\&=2\Im \int_{\R}\partial_{t}\overline{u}(t)\rho(t)\psi_j(x,t)\,dx+2\Im \int_{\R}\overline{u}(t)\partial_{t}\rho(t)\psi_j(x,t)\,dx +2\Im\int_{\R}\overline{u}(t)\rho(t)\partial_{t}\psi_j(x,t)\,dx\\
   &=-2\Im \int_{\R}\overline{\rho}(t)\rho(t)\psi_j(x,t)\,dx+2\Im \int_{\R}\overline{u}(t)(-\partial_{xx}^2u(t)+u(t)+\alpha u(t)v(t)+\beta|u(t)|^2u(t))\psi_j(x,t)\,dx\\
   &\quad+2\Im\int_{\R}\overline{u}(t)\rho(t)\left(\frac{x^{1}+m_{j} t}{2 t\sqrt{t}}\right)\psi_{j}^{\prime}(x,t)\,dx\\
   &=-2\Im\int_{\R}\overline{u}(t)\partial_{xx}^2u(t)\psi_j(x,t)\,dx+2\Im\int_{\R}|u(t)|^2\psi_j(x,t)\,dx+2\alpha \Im\int_{\R}|u(t)|^2v(t)\psi_j(x,t)\,dx\\
   &\quad+2\beta \Im\int_{\R}|u(t)|^4\psi_j(x,t)\,dx+2\Im\int_{\R}\overline{u}(t)\rho(t)\left(\frac{x^{1}+m_{j} t}{2 t\sqrt{t}}\right)\psi_{j}^{\prime}(x,t)\,dx\\
   &=2\Im\int_{\R}|\partial_{x}u(t)|^2\psi_j(x,t)\,dx+\frac{2}{\sqrt{t}}\Im\int_{\R}\overline{u}(t)\partial_{x}u(t)\psi_j^{\prime}(x,t)\,dx\\
   &\quad+2\Im\int_{\R}\overline{u}(t)\rho(t)\left(\frac{x^{1}+m_{j} t}{2 t\sqrt{t}}\right)\psi_{j}^{\prime}(x,t)\,dx\\
   &=\frac{2}{\sqrt{t}}\Im\int_{\R}\overline{u}(t)\partial_{x}u(t)\psi_j^{\prime}(x,t)\,dx+2\Im\int_{\R}\overline{u}(t)\rho(t)\left(\frac{x^{1}+m_{j} t}{2 t\sqrt{t}}\right)\psi_{j}^{\prime}(x,t)\,dx.
\end{aligned}$$
Similarly, we have that $$\left|\frac{\partial}{\partial t}\int_{\R}Q_{2,j}(\Vec{u}(t))dx\right|\leq \frac{C}{\sqrt{t}}\int_{\R}\left(|u(t)|^2+|\partial_{x}u(t)|^2+|\rho(t)|^2\right)\psi_{j}^{\prime}(x,t) \, d x.$$

We define $\Omega_{j}=\left[m_{j} t-\sqrt{t}, m_{j} t+\sqrt{t}\right]$, then  $$\left|\frac{\partial}{\partial t}\int_{\R}\left(Q_{1,j}(\Vec{u}(t))+Q_{2,j}(\Vec{u}(t))\right)dx\right|\leq \frac{C}{\sqrt{t}}\int_{\Omega_j}\left(|u(t)|^2+|\partial_{x}u(t)|^2+|n(t)|^2+|v(t)|^2+|\rho(t)|^2\right) \, d x.$$
Now, as for $j \in \{2,\ldots,N\}$ we have
\begin{equation}\label{termomomento1}
\begin{aligned}
 \int_{\Omega_{j}}&\left(|\partial_{x} u(t)|^{2}+|u(t)|^{2} +|v(t)|^{2}+|\rho(t)|^{2}+|n(t)|^{2} \right) \,dx\\& \leqslant \int_{\Omega_{j}}\left(|\partial_{x}\tilde{R}_{1}(t)|^{2}+|\tilde{R}_{1}(t)|^{2}+|\tilde{R}_{3}(t)|^{2}+|\tilde{R}_{4}(t)|^{2} + |\tilde{R}_{2}(t)|^{2}\right) \,dx \\
&\qquad+ \|\Vec{u}(t) - \vec{\tilde{R}}(t)\|_{X}^{2}   
\end{aligned}
\end{equation}
and
\begin{equation}\label{termomomento2}
\int_{\Omega_{j}} |u(t)|^{4} \,dx \leqslant \int_{\Omega_{j}} |\tilde{R}_{1}(t)|^{4} dx + \|\Vec{u}(t) - \vec{\tilde{R}}(t)\|_{X}^{4}.
\end{equation}
Now, let us estimate the terms involving $R_1$. For the case of $\partial_{x}R_1, R_2,R_3,R_4$, we proceed similarly. Note that, by using the soliton decay (Proposition \ref{decaimentoquadratico})
\begin{equation}\label{Rnosupor}
\begin{aligned}
\int_{\Omega_{j}}\left|\tilde{R}_{1}\right|^{2}  \,dx \leq C\sum_{k=1}^{N}\int_{\Omega_{j}}\left|R_{k}^{(1)}\right|  \,dx\leq  \sum_{k=1}^{N}\int_{\Omega_{j}}e^{-\frac{2}{3}\sqrt{\omega_{\star}}|x-c_{k}t-\tilde{x}_{k}|}\,dx.
\end{aligned}
\end{equation}

Let us estimate $\int_{\Omega_{j}}e^{-\frac{2}{3}\sqrt{\omega_{\star}}|x-c_{k}t-\tilde{x}_{k}|}\,dx$. To do so, we will use the same arguments used in Lemma \ref{solitons}, therefore, some facts will be written without much justification. In fact, if $j < k$, for sufficiently large $T_{0}$,
\begin{equation}\label{integralexpo}
\begin{aligned}
 \int_{\Omega_{j}}e^{-\frac{2}{3}\sqrt{\omega_{\star}}|x-c_{k}t-\tilde{x}_{k}|}\,dx&\leq C\int_{-\sqrt{t}+m_{j}t}^{\sqrt{t}+m_{j}t} e^{-\frac{2}{3}\sqrt{\omega_{\star}}|x^{1}-c_{k}^{1}t|}\,dx^{1} \\
 &=C\int_{-\sqrt{t}+m_{j}t}^{\sqrt{t}+m_{j}t} e^{-\frac{2}{3}\sqrt{\omega_{\star}}|x^{1}-m_{j}t+m_{j}t-c_{k}^{1}t|}\,dx^{1} \\
 &\leq C\int_{-\sqrt{t}+m_{j}t}^{\sqrt{t}+m_{j}t} e^{-\frac{2}{3}\sqrt{\omega_{\star}}\left|x^{1}-m_{j}t+\frac{c_{j-1}-c_{k}}{2}t+\frac{c_{j}-c_{k}}{2}t\right|}\,dx^{1}\\
 &\leq Ce^{-\frac{1}{3}\sqrt{\omega_{\star}}c_{\star}t}\int_{-\sqrt{t}+m_{j}t}^{\sqrt{t}+m_{j}t} e^{\frac{2}{3}\sqrt{\omega_{\star}}\left|x^{1}-m_{j}t+\frac{c_{j}-c_{k}}{2}t\right|}\,dx^{1}\\
 &\leq Ce^{-\frac{1}{3}\sqrt{\omega_{\star}}c_{\star}t}\int_{-\sqrt{t}+\frac{c_{j}-c_{k}}{2}t}^{\sqrt{t}+\frac{c_{j}-c_{k}}{2}t} e^{\frac{2}{3}\sqrt{\omega_{\star}}|x^{1}|}\,dx^{1}\\
 &\leq Ce^{-\frac{1}{3}\sqrt{\omega_{\star}}c_{\star}t}\int_{\mathbb{R}} e^{-\frac{2}{3}\sqrt{\omega_{\star}}x^{1}}\,dx^{1}\\
 &\leq Ce^{-4\sqrt{\omega_{\star}}c_{\star}t}.
\end{aligned}
\end{equation}
Now, if $j > k$,
\begin{equation}\label{integralexpo2}
\begin{aligned}
 \int_{\Omega_{j}}e^{-\frac{2}{3}\sqrt{\omega_{\star}}|x-c_{k}t-\tilde{x}_{k}|}\,dx&\leq C\int_{-\sqrt{t}+m_{j}t}^{\sqrt{t}+m_{j}t} e^{-\frac{2}{3}\sqrt{\omega_{\star}}|x^{1}-c_{k}^{1}t|}\,dx^{1}\\
 &\leq C\int_{-\sqrt{t}+m_{k+1}t}^{\infty} e^{-\frac{2}{3}\sqrt{\omega_{\star}}|x^{1}-c_{k}^{1}t|}\,dx^{1} \\
 &\leq  C \int_{-\sqrt{t}+m_{k+1} t}^{\sqrt{t}+m_{k+1} t} e^{-\frac{2}{3} \sqrt{\omega_{\star}}\left|x^{1}-c_{k}^{1} t\right|} \,d x^{1}+C \int_{\sqrt{t}+m_{k+1}t}^{\infty} e^{-\frac{2}{3} \sqrt{\omega_{\star}} |x^{1}-c_{k}^{1}t|} \,d x^{1}\\
&\leq Ce^{-\frac{1}{3} \sqrt{\omega_{\star}} c_{\star}t}(2\sqrt{t})e^{\frac{2}{3} \sqrt{\omega_{\star}} \sqrt{t}}+Ce^{-\frac{2}{3} \sqrt{\omega_{\star}} \sqrt{t}}e^{-\frac{1}{3} \sqrt{\omega_{\star}} c_{\star}t}\\
&\leq Ce^{-\frac{1}{3} \sqrt{\omega_{\star}} c_{\star}t}\left((2\sqrt{t})e^{\frac{2}{3} \sqrt{\omega_{\star}} \sqrt{t}}+e^{-\frac{2}{3} \sqrt{\omega_{\star}} \sqrt{t}}\right)\\
 &\leq Ce^{-4\sqrt{\omega_{\star}}c_{\star}t}.
\end{aligned}
\end{equation}
When $j = k$, we proceed similarly, therefore from \eqref{Rnosupor}-\eqref{integralexpo2},
$$\int_{\Omega_{j}}\left|\tilde{R}_{1}\right|^{2}\leq Ce^{-4\sqrt{\omega_{\star}}c_{\star}t}.$$
In a similar way, we can obtain that
$$\int_{\Omega_{j}}\left|\tilde{R}_{1}\right|^{4}\leq Ce^{-4\sqrt{\omega_{\star}}c_{\star}t}.$$
\noindent Thus, combining \eqref{termomomento1} - \eqref{termomomento2}, Lemma \ref{solitons}, it follows that for $j \in \{2,\ldots,N\}$, 
\begin{equation}\label{varmomem}
\left|\frac{\partial}{\partial t}\int_{\R}\left(Q_{1,j}(\Vec{u}(t))+Q_{2,j}(\Vec{u}(t))\,dx\right)\right|\leq  \frac{C}{\sqrt{t}} e^{-2\sqrt{\omega_{\star}}c_{\star}t},
\end{equation}
Note that \eqref{varmomem} are satisfied for $j=1$, as in this case it is enough to use the conservation of  momentum  and momentum 2, by the flow of system \eqref{system2}.

\noindent Consequently, for $j \in \{1,2,\ldots,N\}$,
$$\left|\frac{\partial}{\partial t}\int_{\R}\left(Q_{1,j}(\Vec{u}(t))+Q_{2,j}(\Vec{u}(t))\right)\,dx\right|\leq  \frac{C}{\sqrt{t}} e^{-2\sqrt{\omega_{\star}}c_{\star}t},$$
which shows the desired result.
\end{proof}
We now proceed to prove the main result of this section.
\begin{proof}[Proof the Proposition \ref{Bootstrap3}]
Let us fix $t \in [t_{0},T^{n}]$.

From Proposition  \ref{coercivity2}, it follows that there exists $C = C\left(\mathbf{\Phi}_{1}, \ldots, \mathbf{\Phi}_{N}\right) > 0$ such that
\begin{equation}\label{estimativaH1}
\|\vec{u}(t)\|_{X}^{2} \leq C\mathcal{H}_{loc} (\vec{u}(t), t).
\end{equation}
Now, using Lemma \ref{newS}, we obtain
\[
\left|\mathcal{H}_{loc}(\vec{u}(t), t)\right| \leq \left|\mathcal{S}(\vec{u}(t), t) - \mathcal{S}(\vec{R}(t), t)\right| +\sum_{j=1}^{N}\mathcal{O}\left(|\tilde{\omega}_{j} -\omega_{j}|^{2}\right)+ O\left(e^{-3 \sqrt{\omega_{\star}}c_{\star}t}\right).
\]
Furthermore,
\begin{equation}\label{leyesdeconser}
\begin{aligned}
\mathcal{S}(\vec{R}(T^{n}), t) &= \sum_{j=1}^{N} \mathcal{S}_{j,loc}\left(\vec{R}_{j}(T^{n}), T^{n}\right) + O\left(e^{-3 \sqrt{\omega_{\star}} c_{\star} T^{n}}\right)\\
&= \sum_{j=1}^{N} \mathcal{S}_{j,loc}\left(\vec{R}_{j}(t), t\right) + O\left(e^{-3 \sqrt{\omega_{\star}} c_{\star} t}\right)\\
&= \mathcal{S}(\vec{R}(t), t).
\end{aligned}
\end{equation}
Thus, since $\vec{u}(T^{n}) = \vec{R}(T^{n})$, from Lemma \ref{variationomega} and Lemma \ref{VariacionS} we obtain
\begin{equation}\label{estimativageneralH}
\begin{aligned}
\left|\mathcal{H} (\vec{u}(t), t)\right| &\leq \left|\mathcal{S}(\vec{u}(t), t) - \mathcal{S}(\vec{R}(t), t)\right|+\sum_{j=1}^{N}\mathcal{O}\left(|\tilde{\omega}_{j} -\omega_{j}|^{2}\right) + O\left(e^{-3 \sqrt{\omega_{\star}}c_{\star}t}\right)\\
&\leq \int_{t}^{T^{n}} \left|\frac{\partial \mathcal{S}(\vec{u}(s), s)}{\partial s}\right| ds + O\left(e^{-3 \sqrt{\omega_{\star}}c_{\star}t}\right)\\
&\leq \frac{C}{\sqrt{t}} e^{-2 \sqrt{\omega_{\star}}c_{\star} t}.
\end{aligned}
\end{equation}
Combining \eqref{estimativaH1}- \eqref{estimativageneralH}, it follows that
\[
\|\vec{u}(t)\|_{X}^{2} \leq \frac{C}{\sqrt{t}}  e^{-2\sqrt{\omega_{\star}}c_{\star} t}.
\]
We have that for sufficiently large $T_{0}$ (depending only on $c_{j}$),
\[
\|\vec{u}(t)\|_{X} \leq \frac{1}{2} e^{-\sqrt{\omega_{\star}}c_{\star} t}.
\]
This proves the desired result.
\end{proof}
\section{Existence Multi-solitons}
In this  section we will prove  Theorem \ref{maintheorem}. We first  prove that our approximate solutions $\Vec{u}^{n}$ indeed satisfy the assumption in Proposition \ref{desiesti}.

\begin{proposition}[Uniform Estimates]\label{UniformEstimates2}
 There are $T_{0}\in \R$ and $n_{0} \in\N$ such that, for every $n \geq n_{0}$, each approximate solution $\Vec{u}^{n}$ is defined in $[T_{0},T^{n}]$ and for all $t \in [T_{0},T^{n}]$
\begin{equation*}
\|\Vec{u}^{n}(t)-\vec{R}(t)\|_{X}\leq e^{-\sqrt{\omega_{\star}}c_{\star}t}.
\end{equation*}
\end{proposition}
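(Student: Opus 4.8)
The plan is to establish Proposition \ref{UniformEstimates2} by a standard continuity (bootstrap) argument whose engine is Proposition \ref{Bootstrap3}, combined with the local theory of Theorem \ref{wellpo}. First I would fix $T_{0}$ large enough for Proposition \ref{Bootstrap3} to apply (recall that there $T_{0}$ depends only on the speeds $c_{j}$), enlarging it if necessary so that $e^{-\sqrt{\omega_\star}c_\star t}$ is smaller than the modulation radius $\alpha_{1}$ of Lemma \ref{lemamodulation0} for all $t\geq T_{0}$, and then choose $n_{0}$ so that $T^{n}>T_{0}$ whenever $n\geq n_{0}$. For each such $n$, Theorem \ref{wellpo} provides a unique solution of the final-value problem $\vec{u}^{n}(T^{n})=\vec{R}(T^{n})$ on a maximal interval $(T_{n}^{\star},T^{n}]$, together with the blow-up alternative: either $T_{n}^{\star}\leq T_{0}$, or $\|\vec{u}^{n}(t)\|_{X}\to\infty$ as $t\downarrow T_{n}^{\star}$.

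Next I would introduce the maximal time down to which the a priori bound survives,
$$ t_{n}^{\star}:=\inf\Big\{\tau\in(T_{n}^{\star},T^{n}]\;:\;\|\vec{u}^{n}(t)-\vec{R}(t)\|_{X}\leq e^{-\sqrt{\omega_\star}c_\star t}\ \text{ for all }t\in[\tau,T^{n}]\Big\}. $$
At $t=T^{n}$ the difference vanishes since $\vec{u}^{n}(T^{n})=\vec{R}(T^{n})$, so the defining set is nonempty, and by continuity of $t\mapsto\vec{u}^{n}(t)$ in $X$ the bound holds on the closed interval $[t_{n}^{\star},T^{n}]$.

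The core step is to show $t_{n}^{\star}\leq T_{0}$. Suppose, for contradiction, that $t_{n}^{\star}>T_{0}$. Then the a priori bound holds on $[t_{n}^{\star},T^{n}]$, so Proposition \ref{Bootstrap3} upgrades it to the strict estimate $\|\vec{u}^{n}(t)-\vec{R}(t)\|_{X}\leq\tfrac12 e^{-\sqrt{\omega_\star}c_\star t}$ on that same interval. By continuity in $X$, this strict inequality persists on a slightly larger interval $[t_{n}^{\star}-\delta,T^{n}]$, on which in particular the nonstrict bound still holds; this contradicts the minimality of $t_{n}^{\star}$. Hence $\|\vec{u}^{n}(t)-\vec{R}(t)\|_{X}\leq e^{-\sqrt{\omega_\star}c_\star t}$ holds on all of $[T_{0},T^{n}]\cap(T_{n}^{\star},T^{n}]$. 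Finally I would close the existence: this uniform bound gives $\|\vec{u}^{n}(t)\|_{X}\leq\|\vec{R}(t)\|_{X}+e^{-\sqrt{\omega_\star}c_\star t}$, which is bounded on $[T_{0},T^{n}]$, so the blow-up alternative rules out $T_{n}^{\star}\in[T_{0},T^{n})$; thus $\vec{u}^{n}$ is in fact defined on the whole of $[T_{0},T^{n}]$ and satisfies the claimed estimate there, with $T_{0}$ and $n_{0}$ independent of $n$.

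The main obstacle I anticipate is the careful coupling of the bootstrap improvement with the continuation criterion so that existence is propagated all the way down to the fixed $T_{0}$ \emph{uniformly in} $n$. Proposition \ref{Bootstrap3} only asserts ``if defined and bounded, then better bounded,'' so by itself it does not produce an existence interval; one must run the continuity argument on the (relatively open) set where the a priori bound holds and simultaneously exploit the resulting $X$-control, via the blow-up alternative of Theorem \ref{wellpo}, to prevent $T_{n}^{\star}$ from lying above $T_{0}$. Ensuring that the threshold $T_{0}$ depends only on the $c_{j}$ (and not on $n$ or $T^{n}$) is exactly what makes the estimate uniform and is the delicate accounting that must be verified.
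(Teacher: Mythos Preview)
Your proposal is correct and follows essentially the same approach as the paper: a continuity/bootstrap argument that defines the infimum of times down to which the estimate holds, applies Proposition \ref{Bootstrap3} to gain the factor $\tfrac12$, and contradicts minimality via continuity. If anything, your treatment of the coupling between the bootstrap bound and the blow-up alternative of Theorem \ref{wellpo} is more explicit than the paper's, which simply asserts at the outset that the bound precludes finite-time blow-up and then ``may assume'' the solution exists on $[T_0,T^n]$.
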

\begin{proof}
Let $T_{0}$, $n_{0}$ and set $n \in \N$ with $n \geq n_{0}$ . As we will see, the argument below shows that as long as the approximate solution $\Vec{u}^{n}$ exist it satisfies \eqref{desiesti}, which in turn implies that it does not blow up in finite time. Consequently,  we may assume that it is defined in $[T_{0},T^{n}]$ and \[\Vec{u}^{n} \in \mathbf{C}([T_{0},T^{n}],H^1(\R)) .\]

 Furthermore, since  $\vec{R}$ is continuous with respect to time, it follows that for every $t$ sufficiently close to ${T^n}^-$.  
\[
\|\Vec{u}^{n}(t)-\vec{R}(t)\|_{ H}\\ \leq \|\Vec{u}^{n}(t)-\Vec{u}^{n}(T^{n})\|_{X} + \|\vec{R}(T^{n})-\vec{R}(t)\|_{X} \leq e^{-\sqrt{\omega_{\star}}c_{\star}t}.
\]
Now, let us consider 
\[ t_{\sharp}:= \inf\left\{t_{\dagger}\in [T_{0},T^{n}]: \, \mbox{\eqref{desiesti} is satisfied for
all $t \in [t_{\dagger},T^{n}]$}\right\}.\]
Clearly $t_{\sharp} < T^{n}$. So, to prove the proposition we  just need to show that $t_{\sharp}=T_{0}$.
Assume by contradiction that $t_{\sharp} >T_{0}$. Then, by Proposition \ref{Bootstrap3}, for all $t \in [t_{\sharp},T^{n}]$, we have  \begin{equation*}
\|\Vec{u}^{n}(t)-\vec{R}(t)\|_{X}\leq \frac{1}{2}e^{-\sqrt{\omega_{\star}}c_{\star}t}.
\end{equation*}
Consequently,
\begin{equation*}
\|\Vec{u}^{n}(t_{\sharp}))-\vec{R}(t_{\sharp})\|_{X}\leq \frac{1}{2}e^{-\sqrt{\omega_{\star}}v_{ \star}t_{\sharp}}.
\end{equation*}
By the above argument, the continuity of $\Vec{u}^{n}$ implies that for $t$ close enough to $t_{\sharp}$ (from the left),
\begin{equation*}
\|\Vec{u}^{n}(t)-\vec{R}(t)\|_{X}\leq e^{-\sqrt{\omega_{\star}}c_{\star}t},
\end{equation*}
which contradicts the minimality of $t_{\sharp}$. Thus, one must have $t_{\sharp}=T_{0}$ and the proof is completed.
\end{proof}
Before proving our main theorem we still need the following result.
\begin{proposition}\label{c.n}
There exists $\Vec{u}^{0} \in X$ and a solution 
$\vec{u}$ of \eqref{system2} such that, up to a subsequence, $\Vec{u}^{n}(t)  \rightharpoonup \Vec{u}(t) $  in $X$ and $\vec{u}(T_0)=\vec{u}^{0}$ for all  $n\to +\infty$.
\end{proposition}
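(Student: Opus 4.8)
The plan is to construct $\vec{u}$ by a weak compactness argument applied to the sequence of approximate solutions. First I would record that Proposition \ref{UniformEstimates2}, evaluated at the fixed time $T_0$, gives $\|\vec{u}^{n}(T_0)-\vec{R}(T_0)\|_{X}\leq e^{-\sqrt{\omega_{\star}}c_{\star}T_0}$ for all $n\geq n_0$; since $\vec{R}(T_0)$ is a fixed element of $X$, the family $\{\vec{u}^{n}(T_0)\}_{n\geq n_0}$ is bounded in the Hilbert space $X$. By the Banach--Alaoglu theorem, after passing to a subsequence (not relabeled) there is $\vec{u}^{0}\in X$ with $\vec{u}^{n}(T_0)\rightharpoonup \vec{u}^{0}$ weakly in $X$. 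I then take $\vec{u}$ to be the solution of \eqref{system2} emanating from the data $\vec{u}(T_0)=\vec{u}^{0}$, whose existence is guaranteed by Theorem \ref{wellpo} applied with $(s,r)=(1,0)$, i.e.\ in $X^{1,0}=X$.

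Next I would propagate the weak convergence from the single instant $T_0$ to every time of the existence interval. Fix $t$; by Proposition \ref{UniformEstimates2} the family $\{\vec{u}^{n}(t)\}$ is bounded in $X$, so each subsequence admits a weak limit and it suffices to identify that limit with $\vec{u}(t)$. For this I use the Duhamel representation \eqref{integral-form}: the propagator $\bm{G}(t-t')$ is unitary on $X$ (Theorem \ref{wellpo}), hence weakly continuous, so $\bm{G}(t-T_0)\vec{u}^{n}(T_0)\rightharpoonup \bm{G}(t-T_0)\vec{u}^{0}$. The remaining task is to pass to the limit in the nonlinear term $\int_{T_0}^{t}\bm{G}(t-t')\vec{F}(\vec{u}^{n}(t'))\,dt'$, where $\vec{F}(\vec{u})=(0,\alpha uv+\beta|u|^2u,0,(|u|^2)_x)$.

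The main obstacle is precisely this passage to the limit in $\vec{F}$, since the nonlinearity is not continuous for the weak topology of $X$. The plan is to upgrade weak to strong \emph{local} convergence: the uniform $X$-bound controls $\vec{u}^{n}$ in $L^{\infty}([T_0,T];H^1(\R)\times L^2(\R)\times L^2(\R)\times L^2(\R))$, while \eqref{system2} controls $\partial_t\vec{u}^{n}$ in a space of negative order, so an Aubin--Lions--Simon compactness argument yields, along a further subsequence, $\vec{u}^{n}\to\vec{u}$ strongly in $C([T_0,T];L^2_{loc})$ and almost everywhere. Combined with the uniform $H^1$ bound and the one-dimensional embedding $H^1(\R)\hookrightarrow L^{\infty}(\R)$, this suffices to identify the limits of $u^{n}v^{n}$, $|u^{n}|^2u^{n}$ and $(|u^{n}|^2)_x$ (tested against compactly supported functions) with the corresponding expressions for $\vec{u}$; hence $\vec{u}$ satisfies \eqref{integral-form} and $\vec{u}^{n}(t)\rightharpoonup\vec{u}(t)$ for each $t$. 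A diagonal extraction over a countable dense set of times, together with the equicontinuity in $t$ furnished by the equation, makes the subsequence independent of $t$. Finally, weak lower semicontinuity of $\|\cdot\|_{X}$ applied to the estimate of Proposition \ref{UniformEstimates2} gives $\|\vec{u}(t)-\vec{R}(t)\|_{X}\leq e^{-\sqrt{\omega_{\star}}c_{\star}t}$ on the whole existence interval, which rules out blow-up, shows that $\vec{u}$ is defined on all of $[T_0,+\infty)$, and (as a byproduct) establishes \eqref{desprin}.
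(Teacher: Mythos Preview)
Your argument is correct, but it takes a somewhat different route from the paper's. After extracting the weak limit $\vec{u}^{0}$ exactly as you do, the paper does not go through Duhamel, Aubin--Lions, or a direct passage to the limit in the nonlinearity. Instead it exploits the well-posedness of \eqref{system2} at \emph{lower} regularity: by the compact embedding $X\hookrightarrow H^{s}_{loc}\times\dot H^{s-1}_{loc}\times H^{s-1}_{loc}\times H^{s-1}_{loc}$ for $\tfrac12<s<1$, the convergence $\vec{u}^{n}(T_0)\to\vec{u}^{0}$ holds strongly in those local spaces; the continuous dependence (Lipschitz flow map) asserted in Theorem~\ref{wellpo} at regularity $s$ then propagates this to $\vec{u}^{n}(t)\to\vec{u}(t)$ strongly in $H^{s}_{loc}$ for every $t$, after which boundedness in $X$ upgrades this to weak $X$-convergence by a density/duality argument.

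What each approach buys: the paper's method is shorter because it hides all the nonlinear limit-passing inside the black box of $H^{s}$-well-posedness; no Aubin--Lions lemma, no separate treatment of $uv$, $|u|^2u$, $(|u|^2)_x$, and no diagonal extraction are needed. Your approach is more self-contained and would work even if one only had well-posedness at the energy level $X$ itself, at the cost of the extra compactness machinery. Your final paragraph, using weak lower semicontinuity to transfer the uniform estimate and rule out blow-up, actually reproduces the paper's proof of Theorem~\ref{maintheorem} rather than Proposition~\ref{c.n}; the paper separates these two steps.
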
 
\begin{proof}
In view of \eqref{desiesti} we have that $\Vec{u}^n(T_{0})$ is bounded in $X$. Hence, up to a subsequence, there is $\Vec{u}^{0} \in X$ such that
\begin{equation*}\label{convergence}
\Vec{u}^n(T_{0}) \rightharpoonup \Vec{u}^{0} \quad \mbox{in} \quad X.
\end{equation*}
Now, since $$X \stackrel{}{\hookrightarrow} H_{loc}^{s}(\R)\times \bm{\dot{H}_{loc}}
^{s-1}(\R), \, \, \text{for}\, \, s<1,$$
where $\bm{\dot{H}_{loc}}
^{s-1}(\R)=\dot{H}_{loc}^{s-1}\times H_{loc}^{s-1} \times H_{loc}^{s-1}$. It follows that \begin{equation*}
\Vec{u}^n(T_{0}) \to \Vec{u}^{0} \quad \mbox{in} \quad H_{loc}^{s}(\R)\times \bm{\dot{H}_{loc}}
^{s-1}(\R).
\end{equation*}
Let $R>0$ and $\bm{\Upsilon} \in (\bm{\mathcal{D}}(\R))^4$ be a test function such that $\text{supp}(\bm{\Upsilon}) \subset B:=\bm{B}_R(0)$.
Let $\Vec{u}^{0}$ be as obtained in Proposition \ref{c.n}. From the local well-posedness of \eqref{system2} in $H^s(\R)\times \bf{\dot{H}}
^{s-1}(\R)$, for $1/2<s\leq 1$,
there is a solution $\Vec{u}$ of  \eqref{system2}  with initial condition $\Vec{u}(T_{0})= \Vec{u}^{0}$ defined in an interval $[T_{0},T^{\star})$, where $T^{\star}$ is the maximum time of existence, such that 
\begin{equation*}
\Vec{u}^n(t) \to \Vec{u}(t) \quad \mbox{in} \quad H_{loc}^{s}(\R)\times \bm{\dot{H}_{loc}}
^{s-1}(\R).
\end{equation*}
Notice that,
$$\begin{aligned}
    \left|\langle \vec{u}^n(t)-\vec{u}(t),\bm{\Upsilon}\rangle\right| &=\bigg|\int_{\{|x|\leq R\}}\bigg(\left(\partial_t u_1(t)-\partial_t u_1^n (t)\right)\Upsilon_2 +\left(\partial_x u_1^n(t)-\partial_x u_1(t)\right)\Upsilon_1 \\
    & \quad
    +\left( u_1^n(t)- u_1(t)\right)\Upsilon_1 +\left( u_3^n(t)- u_3(t)\right)\Upsilon_3+\left( u_4^n(t)- u_4(t)\right)\Upsilon_4 \bigg)\,dx   \bigg|\\
    &\leq \|\vec{u}^n(t)-\vec{u}(t)\|_{H^{s}(B)\times \bm{\dot{H}}^{s-1}(B)}\|\bm{\Upsilon}\|_{H^{2-s}(B)\times \bm{\dot{H}}^{1-s}(B)} \, \, \to \, \, 0 \, \, \text{as} \, \, n\to \infty.
\end{aligned}$$
So, since the sequence is bounded in $X$, it follows that \begin{equation*}
\Vec{u}^n(t) \rightharpoonup \Vec{u}(t) \quad \mbox{in} \quad X.
\end{equation*}
%%%%%%%%%%%%%%%%%%%%%%%%%%%%%%%%%%%%%%%%%%%%%%%%%%%%%%%%%%%%%%%%%%%%%%%%%%%%%%%%%%%%%%%%%%%%%%%%%%%%%%%%%%%%%%%%%%%%%%%%%%%%%
Therefore, we get the desired result.
\end{proof}
Finally we are able to prove  Theorem \ref{maintheorem}.
\begin{proof}[Proof of Theorem \ref{maintheorem}]
There exist $T_{0}\in \R$ and $n_{ 0} \in\N$ such that, for every $n \geq n_{0}$ and each $t \in [T_{0},T^{n}]$,
\begin{equation}\label{des}
\|\Vec{u}^{n}(t)-\vec{R}(t)\|_{X }\leq e^{-\sqrt{\omega_{\star}}c_{\star}t}.
\end{equation}

Let $\Vec{u}^{0}$ be as obtained in Proposition \ref{c.n}. From the local well-posedness of \eqref{system2} in $H^s(\R)\times \bf{\dot{H}}
^{s-1}(\R)$, for $1/2<s\leq 1$,
there is a solution $\Vec{u}$ of  \eqref{system2}  with initial condition $\Vec{u}(T_{0})= \Vec{u}^{0}$ defined in an interval $[T_{0},T^{\star})$, where $T^{\star}$ is the maximum time of existence. We will show that $T^{\star}= +\infty$ and that $\Vec{u}$ checks estimate \eqref{desiesti}. In fact, suppose $T^{\star}<\infty$. Then, using  Proposition \ref{c.n}, by  \eqref{des} we get (taking $n$ large enough such that $T^{n}>T^{\star}$) that $\Vec{u}^n(t)$ is bounded in $X$ for all $t\in [T_{0},T^{\star })$,  for all $ t \in [T_{0}, T^{\star})$,
\[ \Vec{u}^n(t) \rightharpoonup \Vec{u}(t) \quad \mbox{in} \quad X. \]
So,  from \eqref{des},
\begin{equation*}
 \|\Vec{u}(t)-\vec{R}(t)\|_{X} \leq 
  \liminf_{n \rightarrow +\infty} \|\Vec{u}^{n}(t)-\vec{R}(t)\|_{X}
\leq e^{-\sqrt{\omega_{\star}}c_{\star}t}.
\end{equation*}
Hence, $\Vec{u}$ is bounded in $X$  for all $t\in [T_{0},T^{\star })$. But, this contradicts the blow-up alternative  in $H$ given by Theorem \ref{wellpo}. This means that $T^{\star}=+\infty.$ Clearly, the above argument also shows that \eqref{desiesti} holds for all $t\in[T_0,\infty)$. The proof of the main theorem is thus completed.
\end{proof}
 
			\section*{Acknowledgment}

			The authors are supported by Nazarbayev University under the Faculty Development Competitive Research Grants Program for 2023-2025 (grant number 20122022FD4121).
			
			%%%%%%%%%%%%%%%%%%%%%%%%%%%%%%%%%%%%%%%%%%%%%

			\section*{Conflict of interest} The authors declare that they have no conflict of interest. 
			
			\section*{Data Availability}
			There is no data in this paper.

		\end{document}